\newcommand{\red}{\color{red}}
\newcommand{\black}{\color{black}}
\theoremstyle{plain}
\newtheorem{theorem}{Theorem}[section]
\newtheorem{corollary} {Corollary}[section]
\newtheorem{definition} {Definition}[section]
\newtheorem{example} {Example}[section]
\newtheorem{lemma} {Lemma}[section]
\newtheorem{proposition} {Proposition}[section]
\newtheorem{remark} {Remark}[section]
\renewenvironment{proof}[1][Proof]{\noindent\textbf{#1.} }{\ \rule{0.5em}{0.5em}}
\begin{document}

\title{The algebraic and geometric classification of nilpotent Lie triple systems up to dimension four}

\thanks{The second and the fourth authors were  supported by the Centre for Mathematics of the University of Coimbra - UIDB/00324/2020, funded by the Portuguese Government through FCT/MCTES. Third author is supported by the PCI of the UCA `Teor\'\i a de Lie y Teor\'\i a de Espacios de Banach',  by the PAI with project number FQM298 and  by the project FEDER-UCA18-107643. The fourth author was supported by the Spanish Government through the Ministry of Universities grant `Margarita Salas', funded by the European Union - NextGenerationEU}

\author[H. Abdelwahab]{Hani Abdelwahab}
\address{Hani Abdelwahab. 
\newline \indent Mansoura University, Faculty of Science, Department of Mathematics (Egypt).}
\email{{\tt haniamar1985@gmail.com}}

\author[E. Barreiro]{Elisabete Barreiro}
\address{Elisabete~Barreiro. \newline \indent University of Coimbra, CMUC, Department of Mathematics, Apartado 3008,
EC Santa Cruz,
3001-501 Coimbra
(Portugal).}
\email{{\tt mefb@mat.uc.pt}}

\author[A.J. Calder\'on]{Antonio J. Calder\'on}
\address{Antonio J. Calder\'on. \newline \indent University of Cádiz, Department of Mathematics, Puerto Real (Espa\~na).}
\email{{\tt ajesus.calderon@uca.es}}

\author[Amir Fernández Ouaridi]{Amir Fernández Ouaridi}
\address{Amir Fernández Ouaridi. \newline \indent University of Cádiz, Department of Mathematics, Puerto Real (Espa\~na).
\newline \indent University of Coimbra, CMUC, Department of Mathematics, Apartado 3008,
EC Santa Cruz,
3001-501 Coimbra
(Portugal).}
\email{{\tt amir.fernandez.ouaridi@gmail.com}}


\thispagestyle{empty}

\begin{abstract}
In this paper we generalize the Skjelbred–Sund method, used to classify nilpotent Lie algebras, in order to classify triple systems with non-zero annihilator. We develop this method with the purpose of classifying nilpotent Lie triple systems, obtaining from it the algebraic classification of the nilpotent Lie triple systems up to dimension four. Additionally, we obtain  the geometric classification of the variety of nilpotent Lie triple systems up to dimension four.

\bigskip

{\it 2020MSC}: 17A30,
17A40,
17B30,
17D99,
14D06,
14L30.

{\it Keywords}: Nilpotent Lie triple systems, algebraic classification, geometric classification, annihilator extension, non-associative triple systems.
\end{abstract}

\maketitle

\section{{\protect\Large   }Introduction}

In recent years, numerous works have been published on the algebraic \cite{ack, contr11, cfk19, degr3, usefi1, degr2, degr1, demir, degs22, hac18, ikm19,   ikv18, kkk18} and geometric \cite{ale, maria, contr11, bb14, BC99, cfk19, gkk19, gkp, GRH, GRH2, ikv17, kppv, kpv, kv16,kv17, S90} classification of different varieties of low-dimensional algebras. Meanwhile, although there are a few works on the subject \cite{Bouetou, Neher}, the classification of triple systems is a matter still to be explored. In this work, we give the algebraic and geometric classification of nilpotent Lie triple systems, including the graph of primary degenerations and the inclusion graph of orbit closures of the variety. Lie triple systems were introduced by Jacobson \cite{Jacobson} as a generalization of the Lie algebras to the ternary case. These structures arise on the study of the curvature tensor of the space of continuous groups of transformations with affine connection and without torsion \cite{yamaguti}.

The algebraic classification of the nilpotent Lie triple systems is obtained by calculating the annihilator extensions of the same variety but with smaller dimension. To compute these extensions, we develop a generalization of the classic Skjelbred-Sund method, adapted to Lie triple systems. This method was first used for the classification of nilpotent Lie algebras \cite{ss78} and has been used in the last years for the classification of different varieties of algebras. For example,
 non-Lie central extensions of   $4$-dimensional Malcev algebras \cite{hac16},
  non-associative central extensions of   $3$-dimensional Jordan algebras \cite{ha17},
  anticommutative central extensions of $3$-dimensional anticommutative algebras \cite{cfk182},
  central extensions of $2$-dimensional algebras \cite{cfk18}
and some others were described.
There is also an algebraic classification of
nilpotent associative algebras up to dimension $4$ \cite{degr1},
nilpotent Novikov algebras up to dimension $4$ \cite{kkk18},
nilpotent restricted Lie algebras up to dimension $5$ \cite{usefi1},
nilpotent Jordan algebras up to dimension $5$ \cite{ha16},
nilpotent Lie algebras up to dimension $6$ \cite{degr3, degr2},
nilpotent Malcev algebras up to dimension $6$ \cite{hac18},
nilpotent binary Lie algebras up to dimension $6$ \cite{ack,acf},
nilpotent algebras up to dimension $3$ \cite{cfk18, degs22}
nilpotent commutative algebras up to dimension $4$ and
nilpotent anticommutative up to dimension $5$ \cite{degs22}.

The geometric classification is an interesting matter. The problem of determining the irreducible components of a variety of low-dimensional algebras (with respect to the Zariski topology) has been studied in various works. This allows identifying the so-called rigid algebras of the variety, which correspond to those algebras whose orbit by the action of the generalized linear group is an irreducible component. This subject was studied
for $2$-dimensional pre-Lie algebras in \cite{bb09},  
 $2$-dimensional terminal algebras in \cite{cfk19},
 $3$-dimensional Novikov algebras in \cite{bb14},  
 $3$-dimensional Jordan algebras in \cite{gkp},  
 $3$-dimensional Jordan superalgebras in \cite{maria},
 $3$-dimensional Leibniz and $3$-dimensional anticommutative algebras  in \cite{ikv18},
 $4$-dimensional Lie algebras in \cite{BC99},
 $4$-dimensional Zinbiel and  $4$-dimensional nilpotent Leibniz algebras in \cite{kppv},
  $3$-dimensional nilpotent algebras, $4$-dimensional nilpotent commutative algebras and $5$-dimensional nilpotent anticommutative algebras \cite{degs22},
 $6$-dimensional nilpotent Lie algebras in \cite{S90,GRH}, 
 $6$-dimensional nilpotent  Malcev algebras in \cite{kpv} 
and   $2$-dimensional algebras in \cite{kv16}.
To obtain the geometric classification of the variety of nilpotent Lie triple systems, we used a similar approach as the one used in these cited papers.

This work is organized as follows. In the first section, we will introduce the basic definitions related to Lie triple systems. In the second section, we will develop a method to classify the annihilator extensions of a Lie triple system and we will prove that every nilpotent Lie triple system is an annihilator extension of another lower-dimensional nilpotent Lie triple system. In the third section, we will apply this method and we will obtain the algebraic classification of nilpotent Lie triple systems up to dimension four. In the fourth section, we will study the geometric classification of the variety of nilpotent Lie triple systems of dimension three and four.

Throughout this paper, unless stated otherwise, $\mathbb{F}$ denotes an
arbitrary ground field of characteristic not two. All vector spaces are
assumed to be finite dimensional and defined over $\mathbb{F}$.

\begin{definition}[\protect\cite{Lister}] \rm
A vector space $\mathcal{T}$ together with a trilinear map
$ \left[\mbox{\textendash}, \mbox{\textendash}, \mbox{\textendash} \right]
:\mathcal{T\times T\times T}\longrightarrow \mathcal{T}$ is   a \emph{
Lie triple system}  if the following identities are satisfied:

\begin{enumerate}
\item[(A1)] $\left[ x,y,z\right] +\left[ y,x,z\right] =0,$

\item[(A2)] $\left[ x,y,z\right] +\left[ y,z,x\right] +\left[ z,x,y\right]
=0,$

\item[(A3)] $\left[ u,v,\left[ x,y,z\right] \right] =\left[ \left[ u,v,x%
\right] ,y,z\right] +\left[ x,\left[ u,v,y\right] ,z\right] +\left[ x,y,%
\left[ u,v,z\right] \right] ,$
\end{enumerate}
for all $x,y,z,u,v$ in $\mathcal{T}$.  The \emph{dimension}  of the triple system $\mathcal{T}$ is the dimension of $\mathcal{T}$ as a vector space.
\end{definition}

\begin{example} \rm
 Any Lie algebra  with product $\left[ x,y\right] $  can be viewed as a
Lie triple system with respect to the operation defined as $\left[ x,y,z\right]: =\left[ \left[ x,y%
\right] ,z\right] $. A Malcev algebra  with product $\left[ x,y\right] $  is
a\textbf{\ }Lie triple system with respect to the operation defined as $\left[ x,y,z\right] :=2%
\left[ \left[ x,y\right] ,z\right] -\left[ \left[ y,z\right] ,x\right] -%
\left[ \left[ z,x\right] ,y\right] $ (see \cite{Loss}). Every Jordan algebra
with product $x\circ y$  is a\textbf{\ }Lie triple system with respect to the operation
$\left[ x,y,z\right]: =\left( y\circ z\right) \circ x-y\circ \left( z\circ
x\right) $ (cf. \cite{Jacbson}).
\end{example}

Let $\mathcal{T}$ be a Lie triple system. A subspace $\mathcal{I}$ of $%
\mathcal{T}$ is  an \emph{ideal }if $\left[ \mathcal{I},\mathcal{T},%
\mathcal{T}\right] \subset \mathcal{I}$. If $\mathcal{I}$ is an ideal of  $\mathcal{T}$, then the quotient vector space $\overline{\mathcal{T}}\mathcal{=L}/\mathcal{I}$
together with the  trilinear map
\begin{equation*}
\left[ x+\mathcal{I},y+\mathcal{I},z+\mathcal{I}\right] :=\left[ x,y,z\right]
+\mathcal{I},
\end{equation*}
for all $x,y,z, \in \mathcal{T}$, is also a Lie triple system.

\begin{definition}[\protect\cite{Lister}] \rm
The \emph{annihilator} of a Lie triple system$\ \mathcal{T}$ is the ideal $\mathrm{Ann}({\mathcal{T}}) : =\left\{ x\in \mathcal{T}:\left[ x,\mathcal{T},\mathcal{T}
\right] =0\right\} $. Obviously, $\left[ x,\mathcal{T},\mathcal{T}\right] =0$
if and only if $\left[ \mathcal{T},x,\mathcal{T}\right] =\left[ \mathcal{T},\mathcal{T}
,x\right] =0$. A Lie triple system $\mathcal{T}$  is called \emph{abelian}  if $\mathcal{T}=\mathrm{Ann} ( \mathcal{T} ) $.
\end{definition}

Let  $\mathcal{T}_{1}$ and $\mathcal{T}
_{2}$ be two  Lie triple systems. A linear map $\phi :\mathcal{T}_{1}\longrightarrow \mathcal{T}_{2}$ is a
\emph{homomorphism} of Lie triple systems  if $\phi \left( \left[ x,y,z\right]_{\mathcal{T}_{1}} \right) =\left[ \phi \left(
x\right) ,\phi \left( y\right) ,\phi \left( z\right) \right]_{\mathcal{T}_{2}} $ for all $%
x,y,z $ in $\mathcal{T}_{1}$. If $\phi :\mathcal{T}_{1}\longrightarrow
\mathcal{T}_{2}$ is a homomorphism of Lie triple systems, then $\ker \phi $ is an ideal
of $\mathcal{T}_{1}$ and $\phi \left( \mathcal{T}_{1}\right) \cong \mathcal{T%
}_{1}/\ker \phi $.

Given an ideal $\mathcal{I}$ of a Lie triple system $\mathcal{T}$, we may introduce the
following sequence of subspaces: $$\mathcal{I}^{\left( 0\right) }:=\mathcal{I%
}, \quad \mathcal{I}^{\left( n+1\right) }:=\left[ \mathcal{I}^{\left( n\right) },%
\mathcal{T},\mathcal{I}\right] +\left[ \mathcal{I},\mathcal{T},\mathcal{I}
^{\left( n\right) }\right] ,$$ for $n\geq 0$.

\begin{definition}[see \protect\cite{Hopkins}]
\rm
An ideal  $\mathcal{I}$ of a Lie triple system $\mathcal{T}$ is $\mathcal{T}$-\emph{nilpotent} if $\mathcal{I}^{\left( m\right) }=0$, for
some  $m\geq 0$. A Lie triple system $\mathcal{T}$ is \emph{nilpotent} if itself is $\mathcal{T}$-  nilpotent.
\end{definition}

Note that if $\mathcal{I=T}$, then $\mathcal{T}^{\left( n+1\right) }=\left[
\mathcal{T}^{\left( n\right) },\mathcal{T},\mathcal{T}\right] $, for all $%
n\geq 0$. So if $\mathcal{T}$ is nilpotent and $\mathcal{T\neq }0$, then $%
\mathrm{Ann}\left( \mathcal{T}\right) \neq 0$. Moreover, $\mathcal{T}$ is
nilpotent if and only if $\mathcal{T}/\mathrm{Ann}\left( \mathcal{T}\right) $ is
nilpotent (see \cite{Hopkins}).

\section{The algebraic classification of nilpotent Lie triple systems}

An algebraic classification of the class of Lie triple systems of dimension $n$ is a classification, up to isomorphisms, of this class.
Throughout this work, unless stated otherwise,
$\mathcal{T}$ denotes a Lie triple system and ${\mathbb{V}}$   a vector space.

\begin{definition} \rm
Let $\mathcal{T}$ be a Lie triple system and let $\mathbb{V}$ be a vector space. The vector space $Z^{3}\left( \mathcal{T},{\mathbb{V}}\right) $ is defined as the set of all trilinear maps $\theta :\mathcal{T}\times \mathcal{T}
\times \mathcal{T}\rightarrow {\mathbb{V}}$ such that:

\begin{enumerate}
\item[(B1)] $\theta (x,y,z)+\theta (y,x,z)=0,$

\item[(B2)] $\theta (x,y,z)+\theta (y,z,x)+\theta (z,x,y)=0,$

\item[(B3)] $\theta (u,v,\left[ x,y,z\right] )+\theta (\left[ v,u,x\right]
,y,z)+\theta (x,\left[ v,u,y\right] ,z)+\theta (x,y,\left[ v,u,z\right] )=0,$
\end{enumerate}
for all $x,y,z,u,v$ in $\mathcal{T}$.
The elements of $Z^{3}\left( \mathcal{T},{\mathbb{V}}\right) $ are
called \emph{cocycles}.
\end{definition}

Let $\mathcal{T}$ be a Lie triple system and $\mathbb{V}$ be a vector space. Given a trilinear map $\theta :\mathcal{T}\times \mathcal{T}
\times \mathcal{T}\rightarrow {\mathbb{V}}$, consider the vector space  $\mathcal{T}_{\theta }:=\mathcal{T}\boldsymbol{\oplus } {\mathbb{V}}
$ endowed with the trilinear map $\left[\mbox{\textendash}, \mbox{\textendash}, \mbox{\textendash}\right]_{\mathcal{T}_{\theta }}: {\mathcal{T}_{\theta }}^3 \rightarrow \mathcal{T}_{\theta }$ given by  $\left[ x+u,y+v,z+w\right] _{%
\mathcal{T}_{\theta }}:=\left[ x,y,z\right] _{\mathcal{T}}+\theta (x,y,z)$
for all $x,y,z\in \mathcal{T}$ and $u,v,w\in {\mathbb{V}}$. The following
result shows this construction $\mathcal{T}_{\theta }$ is a Lie triple system if and only if $\theta \in Z^{3}\left( \mathcal{T},{\mathbb{V}}\right) $.

\begin{lemma}
\label{Lie triple system}Let $\mathcal{T}$ be a Lie triple system, $\mathbb{V}$ be a vector space
and $\theta :\mathcal{T}\times \mathcal{T}\times \mathcal{T}\longrightarrow {%
\mathbb{V}}$ be a trilinear map. The pair $(\mathcal{T}_{\theta }, {\left[\mbox{\textendash}, \mbox{\textendash}, \mbox{\textendash}\right]}_{\mathcal{T}_{\theta }})$  is a Lie triple system if and only if
$\theta \in Z^{3}\left( \mathcal{T},{\mathbb{V}}\right) $.
\end{lemma}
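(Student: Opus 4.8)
The plan is to verify that $\mathcal{T}_\theta$ satisfies each of the three Lie triple system axioms (A1), (A2), (A3) exactly when $\theta$ satisfies the corresponding cocycle conditions (B1), (B2), (B3). The key structural observation is that the product on $\mathcal{T}_\theta$ splits cleanly: the $\mathcal{T}$-component of $[x+u,y+v,z+w]_{\mathcal{T}_\theta}$ is simply $[x,y,z]_{\mathcal{T}}$, while the $\mathbb{V}$-component is $\theta(x,y,z)$. Because $\mathcal{T}$ is already a Lie triple system, the $\mathcal{T}$-component of each axiom is automatically satisfied; hence each axiom in $\mathcal{T}_\theta$ reduces to a condition purely on the $\mathbb{V}$-component, which is governed by $\theta$.

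First I would establish the forward-and-backward equivalence axiom by axiom. For (A1), note that the product depends only on the $\mathcal{T}$-parts $x,y,z$ of its arguments, so
\begin{equation*}
[x+u,y+v,z+w]_{\mathcal{T}_\theta}+[y+v,x+u,z+w]_{\mathcal{T}_\theta}
= \bigl([x,y,z]_{\mathcal{T}}+[y,x,z]_{\mathcal{T}}\bigr)+\bigl(\theta(x,y,z)+\theta(y,x,z)\bigr).
\end{equation*}
The first bracket vanishes by (A1) for $\mathcal{T}$, so the sum is zero if and only if $\theta(x,y,z)+\theta(y,x,z)=0$, which is exactly (B1). The argument for (A2) versus (B2) is entirely analogous, using the cyclic sum in place of the transposition. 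In both directions the equivalence is immediate once the $\mathcal{T}$-component is discarded, since $x,y,z$ range over all of $\mathcal{T}$ and $u,v,w$ over all of $\mathbb{V}$ freely.

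The main obstacle, and the step requiring genuine care, is (A3), the Jacobi-type derivation identity, because it is nonlinear in the product: it involves the inner bracket $[x,y,z]$ fed into an outer bracket. I would expand $[u',v',[x',y',z']_{\mathcal{T}_\theta}]_{\mathcal{T}_\theta}$ where $u'=u+a$, etc., using that $[x',y',z']_{\mathcal{T}_\theta}=[x,y,z]_{\mathcal{T}}+\theta(x,y,z)$. The crucial point is that the outer product again ignores the $\mathbb{V}$-component $\theta(x,y,z)$ of its third argument, so the outer bracket sees only $[x,y,z]_{\mathcal{T}}$; its $\mathbb{V}$-component is therefore $\theta(u,v,[x,y,z]_{\mathcal{T}})$. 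Carrying this out on both sides of (A3) and collecting $\mathcal{T}$-components reproduces (A3) for $\mathcal{T}$ (automatically satisfied), while the $\mathbb{V}$-components yield precisely
\begin{equation*}
\theta(u,v,[x,y,z])=\theta([u,v,x],y,z)+\theta(x,[u,v,y],z)+\theta(x,y,[u,v,z]).
\end{equation*}

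The one subtlety to flag is the appearance of $[v,u,\text{--}]$ rather than $[u,v,\text{--}]$ in the stated condition (B3): after using (A1) in $\mathcal{T}$ to rewrite $[u,v,\text{--}]=-[v,u,\text{--}]$ and moving all terms to one side, the $\mathbb{V}$-component identity matches (B3) exactly. Thus each of the three axioms for $\mathcal{T}_\theta$ is equivalent to the corresponding cocycle condition, so $\mathcal{T}_\theta$ is a Lie triple system if and only if all three hold, i.e. if and only if $\theta\in Z^3(\mathcal{T},\mathbb{V})$, completing the proof. The computation for (A3) is routine bookkeeping but must be done attentively because of the sign conventions and the order of the inner arguments.
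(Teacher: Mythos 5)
Your proof is correct and takes essentially the same route as the paper's: expand the bracket of $\mathcal{T}_{\theta}$ componentwise, observe that the $\mathcal{T}$-components of (A1)--(A3) vanish because $\mathcal{T}$ is already a Lie triple system, and identify the surviving $\mathbb{V}$-components with the cocycle conditions (B1)--(B3). Your explicit handling of the sign conversion $\left[ u,v,z\right] =-\left[ v,u,z\right]$ needed to match the $\left[ v,u,\cdot\right]$ ordering in (B3) is precisely the step the paper's proof leaves implicit, so nothing is missing.
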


\begin{proof}
Since $\mathcal{T}$ is a Lie triple system, from $\left( \text{A1}\right) $-$%
\left( \text{A3}\right) $ we have  the following identities:%
\begin{equation*}
\left[ x,y,z\right] _{\mathcal{T}\mathbf{_{\theta }}}+\left[ y,x,z\right] _{%
\mathcal{T}\mathbf{_{\theta }}}=\theta (x,y,z)+\theta (y,x,z),
\end{equation*}
\begin{equation*}
\left[ x,y,z\right] _{\mathcal{T}\mathbf{_{\theta }}}+\left[ y,z,x\right] _{%
\mathcal{T}\mathbf{_{\theta }}}+\left[ z,x,y\right] _{\mathcal{T}\mathbf{%
_{\theta }}}=\theta (x,y,z)+\theta (y,z,x)+\theta (z,x,y),
\end{equation*}
\begin{equation*}
\left[ u,v,\left[ x,y,z\right] _{\mathcal{T}\mathbf{_{\theta }}}\right] _{%
\mathcal{T}\mathbf{_{\theta }}}+\left[ \left[ v,u,x\right] _{\mathcal{T}
\mathbf{_{\theta }}},y,z\right] _{\mathcal{T}\mathbf{_{\theta }}}+\left[ x,%
\left[ v,u,y\right] _{\mathcal{T}\mathbf{_{\theta }}},z\right] _{\mathcal{T}
\mathbf{_{\theta }}}+\left[ x,y,\left[ v,u,z\right] _{\mathcal{T}\mathbf{%
_{\theta }}}\right] _{\mathcal{T}\mathbf{_{\theta }}}=
\end{equation*}
\begin{equation*}
\theta (u,v,\left[ x,y,z\right] )+\theta (\left[ v,u,x\right] ,y,z)+\theta
(x,\left[ v,u,y\right] ,z)+\theta (x,y,\left[ v,u,z\right] ),
\end{equation*}
for any $x,y,z,u,v$ in $\mathcal{T}_{\theta }$. So, $\mathcal{T}_{\theta }$ is a Lie triple system if and only if $\theta \in Z^{3}\left(
\mathcal{T},{\mathbb{V}}\right) $.
\end{proof}

\noindent If $%
\mathrm{dim} {\mathbb{V}} =s$, the Lie triple system  $\mathcal{T}_{\theta }$ is called an $s$-%
\emph{dimensional annihilator extension} of $\mathcal{T}$ by $\mathbb{V}$.

\begin{definition}\rm
Let $\mathcal{T}$ be a Lie triple system and let $\mathbb V$ be a vector space. The \emph{\ radical} of a trilinear map $\theta \in Z^{3}\left( \mathcal{T},{\mathbb{V}}\right)$ is the set
\begin{equation*}
\mathrm{Rad}(\theta) = \left\{x\in \mathcal{T}: \theta(x, \mathcal{T}, \mathcal{T}) = 0 \right\}.
\end{equation*}
Clearly, $\theta(x, \mathcal{T}, \mathcal{T}) = 0 $  if and only if  $\theta(\mathcal{T}, x, \mathcal{T})= \theta(\mathcal{T}, \mathcal{T}, x)=0$.
\end{definition}

\begin{lemma}
\label{annihilator of extension}Let $\mathcal{T}$ be a Lie triple system, let $\mathbb V$ be a vector space and $\theta \in Z^{3}\left( \mathcal{T},%
{\mathbb{V}}\right) $.\ Then

\begin{equation*}
   \mathrm{Ann}\left( \mathcal{T}_{\theta }\right)
=\left( \mathrm{Rad}\left( \theta \right) \cap \mathrm{Ann}\left( \mathcal{T}
\right) \right) \boldsymbol{\oplus }{\mathbb{V}}.
\end{equation*}
\end{lemma}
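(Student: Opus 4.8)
The plan is to prove the two inclusions directly, by unwinding the definition of the annihilator together with the explicit bracket formula on $\mathcal{T}_{\theta}$. The key structural observation, which I would record first, is that the product $\left[ x+u,y+v,z+w\right]_{\mathcal{T}_{\theta}}=\left[ x,y,z\right]_{\mathcal{T}}+\theta(x,y,z)$ depends only on the $\mathcal{T}$-components $x,y,z$ of its three arguments; the $\mathbb{V}$-components $u,v,w$ are irrelevant. Two consequences follow at once: first, every element of $\mathbb{V}$ (an element with zero $\mathcal{T}$-component) lies in $\mathrm{Ann}(\mathcal{T}_{\theta})$, since taking $x=0$ gives $\left[ u,y+v,z+w\right]_{\mathcal{T}_{\theta}}=0$; second, whether a general element $x+u$ annihilates $\mathcal{T}_{\theta}$ is independent of $u$, and the annihilating condition can be tested against $\mathcal{T}$ alone rather than all of $\mathcal{T}_{\theta}$.

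Next I would take an arbitrary element $a=x+u$ with $x\in\mathcal{T}$ and $u\in\mathbb{V}$, and spell out the membership condition $\left[ a,\mathcal{T}_{\theta},\mathcal{T}_{\theta}\right]=0$. By the bracket formula and the preceding observation, this amounts to $\left[ x,y,z\right]_{\mathcal{T}}+\theta(x,y,z)=0$ for all $y,z\in\mathcal{T}$. The decisive step is to split this single equation into its two components: since $\left[ x,y,z\right]_{\mathcal{T}}\in\mathcal{T}$ while $\theta(x,y,z)\in\mathbb{V}$, and since $\mathcal{T}_{\theta}=\mathcal{T}\oplus\mathbb{V}$ is an internal direct sum, the vanishing of the sum forces each summand to vanish separately. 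Hence the condition is equivalent to the conjunction $\left[ x,\mathcal{T},\mathcal{T}\right]_{\mathcal{T}}=0$ and $\theta(x,\mathcal{T},\mathcal{T})=0$, that is, to $x\in\mathrm{Ann}(\mathcal{T})\cap\mathrm{Rad}(\theta)$, with $u\in\mathbb{V}$ left completely unconstrained.

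Combining the two directions yields that $a=x+u\in\mathrm{Ann}(\mathcal{T}_{\theta})$ if and only if $x\in\mathrm{Rad}(\theta)\cap\mathrm{Ann}(\mathcal{T})$ and $u$ is arbitrary in $\mathbb{V}$, which is exactly the asserted direct-sum description $\mathrm{Ann}(\mathcal{T}_{\theta})=\left(\mathrm{Rad}(\theta)\cap\mathrm{Ann}(\mathcal{T})\right)\oplus\mathbb{V}$. There is no genuine obstacle in this argument; the one point that must be stated carefully is the coordinatewise splitting of $\left[ x,y,z\right]_{\mathcal{T}}+\theta(x,y,z)=0$, which rests entirely on $\mathcal{T}$ and $\mathbb{V}$ being complementary summands. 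I would also remark, invoking the observation in the relevant definitions, that testing $\theta(x,\mathcal{T},\mathcal{T})=0$ and $\left[ x,\mathcal{T},\mathcal{T}\right]=0$ suffices, so that no separate verification in the other two slots of the triple product is needed.
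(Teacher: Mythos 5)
Your proof is correct and follows essentially the same route as the paper: both exploit that the bracket on $\mathcal{T}_{\theta}$ ignores the $\mathbb{V}$-components, so $\mathbb{V}\subseteq\mathrm{Ann}(\mathcal{T}_{\theta})$ and the annihilator condition on $x+u$ reduces to $\left[x,\mathcal{T},\mathcal{T}\right]_{\mathcal{T}}+\theta(x,\mathcal{T},\mathcal{T})=0$, which splits componentwise into $x\in\mathrm{Ann}(\mathcal{T})\cap\mathrm{Rad}(\theta)$. Your write-up merely makes explicit the direct-sum splitting that the paper leaves implicit when it writes $\mathrm{Ann}(\mathcal{T}_{\theta})=\mathbb{W}\boldsymbol{\oplus}\mathbb{V}$ with $\mathbb{W}\subseteq\mathcal{T}$.
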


\begin{proof}
Since ${\mathbb{V\subset }}\mathrm{Ann}\left( \mathcal{T}_{\theta }\right) $%
, we may write $\mathrm{Ann}\left( \mathcal{T}_{\theta }\right) ={\mathbb{W}}
\boldsymbol{\oplus }{\mathbb{V}}$ where ${\mathbb{W}}$ is a subspace of $%
\mathcal{T}$. For any $x$ $\in \mathcal{T}$, we have $\left[ x,\mathcal{T}
_{\theta },\mathcal{T}_{\theta }\right] _{\mathcal{T}_{\theta }}=\left[ x,%
\mathcal{T},\mathcal{T}\right] _{\mathcal{T}}+\theta \left( x,\mathcal{T},%
\mathcal{T}\right) $. Thus, $x\in {\mathbb{W}}$ if and only if $x\in \mathrm{Rad}\left(
\theta \right) \cap \mathrm{Ann}\left( \mathcal{T}\right) $. So ${\mathbb{W=}}
\mathrm{Rad}\left( \theta \right) \cap \mathrm{Ann}\left( \mathcal{T}
\right) $.
\end{proof}

\begin{lemma}
Let $\mathcal{T}$ be a Lie triple system, let $\mathbb V$ be a vector space and $\theta \in Z^{3}\left( \mathcal{T},%
{\mathbb{V}}\right) $.
In this condition,  $\mathcal{T}_{\theta }$ is nilpotent if and only if  $\mathcal{T}$ is nilpotent.
\end{lemma}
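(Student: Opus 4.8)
The plan is to exploit two facts about the extension. First, by Lemma~\ref{annihilator of extension} we have $\mathbb{V}\subseteq\mathrm{Ann}(\mathcal{T}_{\theta})$, so $\mathbb{V}$ is an ideal of $\mathcal{T}_{\theta}$ sitting inside its annihilator. Second, the quotient $\mathcal{T}_{\theta}/\mathbb{V}$ is isomorphic to $\mathcal{T}$: since $\theta(x,y,z)\in\mathbb{V}$, the induced bracket satisfies $[x+\mathbb{V},y+\mathbb{V},z+\mathbb{V}]=[x,y,z]_{\mathcal{T}}+\mathbb{V}$, so $x+\mathbb{V}\mapsto x$ is an isomorphism. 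With these in hand, both implications reduce to tracking the nilpotency series through the projection $\pi\colon\mathcal{T}_{\theta}\to\mathcal{T}_{\theta}/\mathbb{V}\cong\mathcal{T}$, recalling that for a whole system the series simplifies to $\mathcal{W}^{(n+1)}=[\mathcal{W}^{(n)},\mathcal{W},\mathcal{W}]$.

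First I would record the general principle that a surjective homomorphism $\phi\colon\mathcal{A}\to\mathcal{B}$ of Lie triple systems carries the nilpotency series onto the nilpotency series, that is, $\phi(\mathcal{A}^{(n)})=\mathcal{B}^{(n)}$ for every $n$. This is a short induction: the base case is $\phi(\mathcal{A})=\mathcal{B}$, and the inductive step uses that $\phi$ is a homomorphism together with surjectivity to pass from $\phi([\mathcal{A}^{(n)},\mathcal{A},\mathcal{A}])$ to $[\mathcal{B}^{(n)},\mathcal{B},\mathcal{B}]$. Applying this to $\pi$ gives $\pi\bigl((\mathcal{T}_{\theta})^{(n)}\bigr)=\mathcal{T}^{(n)}$ for all $n$.

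For the forward direction, if $\mathcal{T}_{\theta}$ is nilpotent with $(\mathcal{T}_{\theta})^{(m)}=0$, then $\mathcal{T}^{(m)}=\pi\bigl((\mathcal{T}_{\theta})^{(m)}\bigr)=0$, so $\mathcal{T}$ is nilpotent. For the converse, suppose $\mathcal{T}^{(m)}=0$. Then $\pi\bigl((\mathcal{T}_{\theta})^{(m)}\bigr)=0$ forces $(\mathcal{T}_{\theta})^{(m)}\subseteq\ker\pi=\mathbb{V}$. Since $\mathbb{V}\subseteq\mathrm{Ann}(\mathcal{T}_{\theta})$, one further step gives $(\mathcal{T}_{\theta})^{(m+1)}=[(\mathcal{T}_{\theta})^{(m)},\mathcal{T}_{\theta},\mathcal{T}_{\theta}]\subseteq[\mathbb{V},\mathcal{T}_{\theta},\mathcal{T}_{\theta}]=0$, so $\mathcal{T}_{\theta}$ is nilpotent.

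The only genuinely delicate point is the behaviour of the series under the surjection $\pi$; everything else is formal. In particular, the inductive identity $\phi([\mathcal{A}^{(n)},\mathcal{A},\mathcal{A}])=[\phi(\mathcal{A}^{(n)}),\phi(\mathcal{A}),\phi(\mathcal{A})]$ must be read as an equality of the spans of the respective bracket images, and it is surjectivity of $\phi$ that upgrades the evident inclusion to an equality. I do not anticipate any obstacle beyond this bookkeeping.
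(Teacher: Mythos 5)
Your proof is correct and is, at bottom, the same argument as the paper's: the paper compresses into the single displayed identity $\left[\mathcal{T}_{\theta}^{(n)},x+u,y+v\right]_{\mathcal{T}_{\theta}}=\left[\mathcal{T}^{(n)},x,y\right]+\theta\bigl(\mathcal{T}^{(n)},x,y\bigr)$ exactly the bookkeeping you carry out through the projection $\pi\colon\mathcal{T}_{\theta}\rightarrow\mathcal{T}_{\theta}/\mathbb{V}\cong\mathcal{T}$, namely that the descending series of $\mathcal{T}_{\theta}$ tracks that of $\mathcal{T}$ modulo the subspace $\mathbb{V}\subseteq\mathrm{Ann}(\mathcal{T}_{\theta})$. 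Your final step $(\mathcal{T}_{\theta})^{(m+1)}\subseteq\left[\mathbb{V},\mathcal{T}_{\theta},\mathcal{T}_{\theta}\right]=0$, and the induction behind $\pi\bigl((\mathcal{T}_{\theta})^{(n)}\bigr)=\mathcal{T}^{(n)}$, are precisely what the paper leaves implicit, so your version is simply a fully spelled-out rendering of the same proof.
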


\begin{proof}
Since $ \left[ \mathcal{T}_{\theta
}^{\left( n\right) },x+u,y+v\right]_{\mathcal{T}_{\theta }} =%
\left[ \mathcal{T}^{\left( n \right) },x,y \right] +\theta %
( \mathcal{T}^{\left( n \right) },x,y  ) $, for all $x,y \in \mathcal{T}$ and $u,v \in {\mathbb{V}}$, we conclude that $%
\mathcal{T}_{\theta }$ is nilpotent if and only if $\mathcal{T}$ is nilpotent.
\end{proof}

The following result shows that every Lie triple system with a non-zero annihilator is an annihilator extension of a smaller-dimensional Lie triple system.

\begin{lemma}
Let $\mathcal{T}$ be a $n$-dimensional Lie triple system with $\mathrm{dim}(\mathrm{Ann}(\mathcal{T}))=m\neq0$. Then there exists, up to isomorphism, an unique $(n-m)$-dimensional Lie triple system $\mathcal{T}
^{\prime }$ and a trilinear map $\theta \in Z^{3}\left( \mathcal{T%
}^{\prime },\mathrm{Ann}\left( \mathcal{T}\right) \right) $ with $\mathrm{Rad}
\left( \theta \right) \cap \mathrm{Ann}\left( \mathcal{T}^{\prime }\right) =0$
such that $\mathcal{T}\cong \mathcal{T}_{\theta }^{\prime }$ and $\mathcal{T}
/\mathrm{Ann}\left( \mathcal{T}\right) \cong \mathcal{T}^{\prime }$.
\end{lemma}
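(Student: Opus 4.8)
The plan is to realize $\mathcal{T}$ explicitly as an annihilator extension of its own quotient by the annihilator, and then to harvest every required property from the two preceding lemmas rather than by direct computation. First I would set $\mathcal{T}' := \mathcal{T}/\mathrm{Ann}(\mathcal{T})$, a Lie triple system of dimension $n-m$, and put $\mathbb{V} := \mathrm{Ann}(\mathcal{T})$. Writing $\pi:\mathcal{T}\to\mathcal{T}'$ for the canonical projection and using that we work over a field, I would choose a linear section $s:\mathcal{T}'\to\mathcal{T}$ with $\pi\circ s=\mathrm{id}_{\mathcal{T}'}$, so that $s(\mathcal{T}')$ is a vector-space complement of $\mathrm{Ann}(\mathcal{T})$ in $\mathcal{T}$. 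I would then define
\[
\theta(\bar x,\bar y,\bar z):=[s(\bar x),s(\bar y),s(\bar z)]_{\mathcal{T}}-s\bigl([\bar x,\bar y,\bar z]_{\mathcal{T}'}\bigr),
\]
and observe that applying $\pi$ to the right-hand side gives $0$ (since $\pi$ is a homomorphism and $\pi\circ s=\mathrm{id}$), so $\theta$ does take values in $\ker\pi=\mathrm{Ann}(\mathcal{T})=\mathbb{V}$.

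The decisive step is to produce the isomorphism directly and let the cocycle condition follow for free. I would define the linear map $\phi:\mathcal{T}'_{\theta}=\mathcal{T}'\oplus\mathbb{V}\to\mathcal{T}$ by $\phi(\bar x+v):=s(\bar x)+v$; because $s$ is injective with image complementary to $\mathbb{V}$, this $\phi$ is a vector-space isomorphism. Next I would check that $\phi$ intertwines the two trilinear maps: expanding $[s(\bar x)+u,s(\bar y)+v,s(\bar z)+w]_{\mathcal{T}}$ by trilinearity, every term containing one of $u,v,w\in\mathrm{Ann}(\mathcal{T})$ vanishes, leaving $[s(\bar x),s(\bar y),s(\bar z)]_{\mathcal{T}}$; on the other side $\phi\bigl([\bar x+u,\bar y+v,\bar z+w]_{\mathcal{T}'_{\theta}}\bigr)=s([\bar x,\bar y,\bar z]_{\mathcal{T}'})+\theta(\bar x,\bar y,\bar z)$, which is the same expression by the definition of $\theta$. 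Transporting the identities (A1)--(A3) of $\mathcal{T}$ through the bijection $\phi$ shows $\mathcal{T}'_{\theta}$ is a Lie triple system and $\mathcal{T}\cong\mathcal{T}'_{\theta}$; then Lemma~\ref{Lie triple system} immediately forces $\theta\in Z^{3}(\mathcal{T}',\mathrm{Ann}(\mathcal{T}))$, so that (B1)--(B3) never need to be verified by hand.

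For the radical condition I would combine this isomorphism with Lemma~\ref{annihilator of extension}, which gives $\mathrm{Ann}(\mathcal{T}'_{\theta})=\bigl(\mathrm{Rad}(\theta)\cap\mathrm{Ann}(\mathcal{T}')\bigr)\oplus\mathbb{V}$. Since $\phi$ is an isomorphism of Lie triple systems it carries annihilator onto annihilator, whence $\dim\mathrm{Ann}(\mathcal{T}'_{\theta})=\dim\mathrm{Ann}(\mathcal{T})=m=\dim\mathbb{V}$. Comparing dimensions in the direct sum then forces $\mathrm{Rad}(\theta)\cap\mathrm{Ann}(\mathcal{T}')=0$. The relation $\mathcal{T}/\mathrm{Ann}(\mathcal{T})\cong\mathcal{T}'$ holds by construction, and this is exactly what fixes $\mathcal{T}'$ up to isomorphism: any base making $\mathcal{T}$ such an extension must be isomorphic to the canonical quotient, which yields the asserted uniqueness of $\mathcal{T}'$.

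I expect no serious obstacle, precisely because the substantive content has been front-loaded into the two earlier lemmas. The only place demanding care is the dimension bookkeeping for the radical condition: one must confirm that the sum $\bigl(\mathrm{Rad}(\theta)\cap\mathrm{Ann}(\mathcal{T}')\bigr)\oplus\mathbb{V}$ really is direct (it is, since $\mathrm{Rad}(\theta)\cap\mathrm{Ann}(\mathcal{T}')\subseteq\mathcal{T}'$ meets $\mathbb{V}$ trivially) so that the count collapses the intersection to $0$. The main subtlety is thus conceptual rather than computational, namely recognizing that the explicit isomorphism $\phi$ supplies the cocycle identities automatically instead of checking them term by term.
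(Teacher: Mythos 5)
Your proposal is correct and follows essentially the same route as the paper: both realize $\mathcal{T}$ as $\mathcal{T}'_{\theta}$ with $\theta$ the difference between the bracket of $\mathcal{T}$ and the induced bracket on a complement of $\mathrm{Ann}(\mathcal{T})$, then obtain the cocycle condition from Lemma~\ref{Lie triple system} and the radical condition from Lemma~\ref{annihilator of extension} rather than by direct verification. The only difference is cosmetic: the paper works with an internal complement and projection $P$ (so that $\mathcal{T}'_{\theta}$ \emph{is} $\mathcal{T}$ on the nose), while you work with the quotient $\mathcal{T}/\mathrm{Ann}(\mathcal{T})$ and a section $s$, transporting the structure through the explicit isomorphism $\phi$.
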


\begin{proof}
Let $\mathcal{T}^{\prime }$ be a linear complement of $\mathrm{Ann}\left(
\mathcal{T}\right) $ in $\mathcal{T}$. Define a linear map $P:\mathcal{T}
\longrightarrow \mathcal{T}^{\prime }$ by $P(x+v):=x$ for $x\in \mathcal{T}
^{\prime }$ and $v\in \mathrm{Ann}\left( \mathcal{T}\right) $, and define a
trilinear map on $\mathcal{T}^{\prime }$ by $\left[ x,y,z\right] _{\mathcal{T%
}^{\prime }}:=P(\left[ x,y,z\right] _{\mathcal{T}})$ for $x,y,z\in \mathcal{T}
^{\prime }$. Then  we get
\begin{eqnarray*}
P([x,y,z]_{\mathcal{T}}) &=&P([x-P(x)+P(x),y-P(y)+P(y),z-P(z)+P(z)]_{%
\mathcal{T}}) \\
&=&P\left( [P(x),P(y),P(z)]_{\mathcal{T}}\right) \\
&=&[P(x),P(y),P(z)]_{\mathcal{T}^{\prime }},
\end{eqnarray*}
for any $x,y,z\in \mathcal{T} $, thus $P$ is a homomorphism of Lie triple systems. So $P(\mathcal{T})=\mathcal{T}^{\prime }$ is a
Lie triple system and $\mathcal{T}/\mathrm{Ann}\left( \mathcal{T}\right) \cong \mathcal{T}
^{\prime }$, which give us the uniqueness. Now, define the trilinear map $%
\theta :\mathcal{T}^{\prime }\times \mathcal{T}^{\prime }\times \mathcal{T}
^{\prime }\longrightarrow \mathrm{Ann}\left( \mathcal{T}\right) $ by $\theta
(x,y,z):=\left[ x,y,z\right] _{\mathcal{T}}-\left[ x,y,z\right] _{\mathcal{T}
^{\prime }}$. Consequently,  $\mathcal{T}_{\theta }^{\prime }$ is   $\mathcal{T}
$, therefore, $\theta \in Z^{3}\left( \mathcal{T}^{\prime },%
\mathrm{Ann}\left( \mathcal{T}\right) \right) $ by
Lemma \ref{Lie triple system}, as well   $\mathrm{Rad}\left(
\theta \right) \cap \mathrm{Ann}\left( \mathcal{T}^{\prime }\right) =0$ by Lemma \ref{annihilator of extension}.
\end{proof}

Let $\mathcal{T}$ be a Lie triple system and let $\mathbb{V}$ be a vector space. We denote as usually
the set of all linear maps from $\mathcal{T}$ to $\mathbb{V}$ by $\mathrm{Hom%
}\left( \mathcal{T},{\mathbb{V}}\right) $. For $f\in \mathrm{Hom}\left(
\mathcal{T},{\mathbb{V}}\right) $, consider the map $\delta f:\mathcal{T}\times
\mathcal{T}\times \mathcal{T}\longrightarrow {\mathbb{V}}$ defined by $\delta
f(x,y,z):=f(\left[ x,y,z\right] )$, for any $x, y, z \in \mathcal{T}$, then $\delta f\in Z^{3}\left(
\mathcal{T},{\mathbb{V}}\right) $. The following definition arises:

\begin{definition} \rm
Let $\mathcal{T}$ be a Lie triple system  and let $\mathbb V$ be a vector space. Define the vector subspace of $Z^{3}\left( \mathcal{T},{\mathbb{V}}
\right) $ by
\begin{equation*}
B^{3}\left( \mathcal{T},{\mathbb{V}}\right) =\left\{ \delta f:f\in \mathrm{Hom}\left( \mathcal{T},{\mathbb{V}}\right) \right\}  .
\end{equation*}
The elements of $B^{3}
\left( \mathcal{T},{\mathbb{V}}\right) $ are
called \emph{coboundaries}.
\end{definition}

\begin{lemma}
Let $\mathcal{T}$ be an $n$-dimensional Lie triple system and let $\left\{
e_{1},e_{2},\ldots ,e_{m}\right\} $ be a basis of $\mathcal{T}^{\left(
1\right) }=\left[\mathcal{T}, \mathcal{T}, \mathcal{T}\right]$. Then the set $\left\{\delta e_1^*, \delta e_2^*, \ldots, \delta e_m^*\right\}$, where $e_i^*(e_j) := \delta_{ij}$ and $\delta_{ij}$ is the Kronecker's symbol for $i,j \in \left\{
1,2,\ldots ,m\right\}$, is a basis of   $B^{3}\left( \mathcal{T},{\mathbb{F}}\right)
 $.
\end{lemma}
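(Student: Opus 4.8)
The plan is to realize $B^{3}(\mathcal{T},\mathbb{F})$ as the image of the dual space $\mathcal{T}^{*}=\mathrm{Hom}(\mathcal{T},\mathbb{F})$ under the coboundary map $f\mapsto \delta f$, and to exploit the fact that $\delta f(x,y,z)=f([x,y,z])$ depends only on the restriction of $f$ to $\mathcal{T}^{(1)}$. First I would extend the given basis $\{e_{1},\ldots,e_{m}\}$ of $\mathcal{T}^{(1)}$ to a basis $\{e_{1},\ldots,e_{n}\}$ of $\mathcal{T}$ and let $\{e_{1}^{*},\ldots,e_{n}^{*}\}$ be the associated dual basis, so that $e_{i}^{*}(e_{j})=\delta_{ij}$ for all $i,j\in\{1,\ldots,n\}$ (this is consistent with the normalization in the statement, where the $e_{i}^{*}$ are to be read as genuine functionals on $\mathcal{T}$). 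Every $f\in\mathcal{T}^{*}$ then decomposes as $f=\sum_{i=1}^{n}f(e_{i})\,e_{i}^{*}$, whence $\delta f=\sum_{i=1}^{n}f(e_{i})\,\delta e_{i}^{*}$ by linearity of $\delta$.

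For spanning, the key observation is that $\delta e_{i}^{*}=0$ whenever $i>m$. Indeed, for arbitrary $x,y,z\in\mathcal{T}$ the bracket $[x,y,z]$ lies in $\mathcal{T}^{(1)}=\mathrm{span}\{e_{1},\ldots,e_{m}\}$, so $e_{i}^{*}([x,y,z])=0$ for every $i>m$. Consequently $\delta f=\sum_{i=1}^{m}f(e_{i})\,\delta e_{i}^{*}$ for each $f\in\mathcal{T}^{*}$, and since $B^{3}(\mathcal{T},\mathbb{F})$ is by definition the set of all such $\delta f$, the family $\{\delta e_{1}^{*},\ldots,\delta e_{m}^{*}\}$ spans it.

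For linear independence, I would suppose $\sum_{i=1}^{m}c_{i}\,\delta e_{i}^{*}=0$ and set $g:=\sum_{i=1}^{m}c_{i}e_{i}^{*}\in\mathcal{T}^{*}$. The hypothesis says $g([x,y,z])=0$ for all $x,y,z$, that is, $g$ annihilates $\mathcal{T}^{(1)}$; evaluating on the basis vector $e_{k}\in\mathcal{T}^{(1)}$ for $k\in\{1,\ldots,m\}$ gives $0=g(e_{k})=c_{k}$, so every coefficient vanishes and the family is independent.

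I do not anticipate a genuine obstacle here: the whole argument is essentially the remark that $f\mapsto\delta f$ factors through the restriction $\mathcal{T}^{*}\to(\mathcal{T}^{(1)})^{*}$ and induces an isomorphism $B^{3}(\mathcal{T},\mathbb{F})\cong(\mathcal{T}^{(1)})^{*}$. The only points requiring a little care are the correct interpretation of the $e_{i}^{*}$ as the first $m$ members of a true dual basis of $\mathcal{T}$ (rather than functionals defined merely on $\mathcal{T}^{(1)}$), and the collapse $\delta e_{i}^{*}=0$ for $i>m$, which is simultaneously why exactly $m$ generators appear and why independence survives. A slightly more structural alternative would be to compute $\mathrm{dim}\,B^{3}(\mathcal{T},\mathbb{F})=m$ directly, observing that $\ker(f\mapsto\delta f)$ is the annihilator of $\mathcal{T}^{(1)}$ in $\mathcal{T}^{*}$, of dimension $n-m$, and then to invoke either spanning or independence alone, since a spanning set of the correct cardinality is automatically a basis.
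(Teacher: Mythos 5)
Your proposal is correct and follows essentially the same route as the paper: extend $\{e_{1},\ldots,e_{m}\}$ to a basis of $\mathcal{T}$, use that every bracket $[x,y,z]$ lies in $\mathcal{T}^{(1)}$ to show $\delta f=\sum_{i=1}^{m}f(e_{i})\,\delta e_{i}^{*}$ (spanning), and prove independence by evaluating the functional $\sum_{i=1}^{m}c_{i}e_{i}^{*}$ on the basis vectors of $\mathcal{T}^{(1)}$. Your explicit observation that $\delta e_{i}^{*}=0$ for $i>m$ and the closing remark on the factorization through $(\mathcal{T}^{(1)})^{*}$ are just tidy repackagings of the paper's computation, not a different argument.
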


\begin{proof}
Let $\left\{ e_{1},\ldots ,e_{m},e_{m+1},\ldots ,e_{n}\right\} $ be a basis
of $\mathcal{T}$, by extending the basis of $\left[\mathcal{T}, \mathcal{T},\mathcal{T}\right]$. Then $\left\{ e_{1}^{\ast },e_{2}^{\ast },\ldots
,e_{n}^{\ast }\right\} $ is a basis of the dual space $\mathcal{T}^{\ast }=%
\mathrm{Hom}\left( \mathcal{T},{\mathbb{F}}\right) $. For any $\delta f\in
B^{3}\left( \mathcal{T},{\mathbb{F}}\right) $, with $%
f=\sum_{l=1}^{n}\alpha _{l}e_{l}^{\ast }$, we have

\begin{equation*}
    \delta f (e_i, e_j, e_k)= \sum_{l=1}^n \alpha_l e_l^*(\left[e_i, e_j, e_k\right] )= \sum_{l=1}^n \alpha_l e_l^*\Bigl(\sum_{p=1}^m \tau_{ijk}^p e_p\Bigr)=\sum_{l=1}^m \alpha_l e_l^*(\left[e_i, e_j, e_k\right] )= \sum_{l=1}^m \alpha_l \delta e_l^*(e_i, e_j, e_k).
\end{equation*}

\noindent Therefore  $\delta f=\sum_{l=1}^{m}\alpha _{l}\delta e_{l}^{\ast }$, proving
that $\delta e_{1}^{\ast },\delta e_{2}^{\ast },\ldots ,\delta e_{m}^{\ast }$
spans $B^{3}\left( \mathcal{T},{\mathbb{F}}\right) $. Moreover,
let $\alpha _{1},\ldots ,\alpha _{m}\in {\mathbb{F}}$ be such that $%
\sum_{l=1}^{m}\alpha _{l}\delta e_{l}^{\ast }=0$, then
\begin{equation*}
\Bigl(\sum_{l=1}^{m}\alpha _{l}e_{l}^{\ast }\Bigr)\left( \left[\mathcal{T}, \mathcal{T},\mathcal{T}\right]\right) =\Bigl(\sum_{l=1}^{m}\alpha _{l}\delta e_{l}^{\ast }\Bigr)(\mathcal{T},\mathcal{T},%
\mathcal{T})=0.
\end{equation*}
This implies that $\alpha _{1}=\alpha _{2}=\cdots =\alpha _{m}=0$ and
consequently $\delta e_{1}^{\ast },\delta e_{2}^{\ast },\ldots ,\delta
e_{m}^{\ast }$ are linearly independent.
\end{proof}

Let $\mathcal{T}$ be a Lie triple system and let $\mathbb{V}$ be a vector space. We
define $H^{3}\left( \mathcal{T},\mathbb{V}\right) $ as the
quotient space $Z^{3}\left( \mathcal{T},\mathbb{V}\right) \big/%
B^{3}\left( \mathcal{T},\mathbb{V}\right) $. The equivalence
class of $\theta \in Z^{3}\left( \mathcal{T},\mathbb{V}\right) $
will be denoted $\left[ \theta \right] \in H^{3}\left( \mathcal{T},%
\mathbb{V}\right) $.

\begin{lemma}
\label{equal}Let $\mathcal{T}$ be a Lie triple system and let $\mathbb{V}$ be a vector
space. Given two cocycles $\theta ,\vartheta \in Z^{3}\left(
\mathcal{T},{\mathbb{V}}\right) $ such that $\left[ \theta \right] =\left[
\vartheta \right] $, then $\mathrm{Ann}\left( \mathcal{T}_{\theta }\right) =%
\mathrm{Ann}\left( \mathcal{T}_{\vartheta }\right) $. Moreover, $\mathcal{T}
_{\theta }\cong \mathcal{T}_{\vartheta }$.
\end{lemma}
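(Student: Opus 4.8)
The plan is to unpack the hypothesis $[\theta]=[\vartheta]$ as the statement that $\theta-\vartheta\in B^{3}(\mathcal{T},\mathbb{V})$, so that there is a linear map $f\in\mathrm{Hom}(\mathcal{T},\mathbb{V})$ with $\theta-\vartheta=\delta f$, i.e.\ $\theta(x,y,z)-\vartheta(x,y,z)=f([x,y,z]_{\mathcal{T}})$ for all $x,y,z\in\mathcal{T}$. I would then treat the two conclusions in turn: the equality of annihilators via Lemma~\ref{annihilator of extension}, and the isomorphism by writing down an explicit linear map built from $f$.

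For the equality of annihilators, Lemma~\ref{annihilator of extension} reduces the claim to showing $\mathrm{Rad}(\theta)\cap\mathrm{Ann}(\mathcal{T})=\mathrm{Rad}(\vartheta)\cap\mathrm{Ann}(\mathcal{T})$, since each annihilator is this intersection direct-summed with $\mathbb{V}$. The key observation is that if $x\in\mathrm{Ann}(\mathcal{T})$, then $[x,y,z]_{\mathcal{T}}=0$ for all $y,z\in\mathcal{T}$, whence $\delta f(x,y,z)=f([x,y,z]_{\mathcal{T}})=f(0)=0$. Thus $\theta$ and $\vartheta$ agree on $\{x\}\times\mathcal{T}\times\mathcal{T}$ for every $x\in\mathrm{Ann}(\mathcal{T})$, so the two radical intersections coincide and the annihilators are equal.

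For the isomorphism, I would define $\Phi:\mathcal{T}_{\theta}\longrightarrow\mathcal{T}_{\vartheta}$ by $\Phi(x+v):=x+v+f(x)$ for $x\in\mathcal{T}$ and $v\in\mathbb{V}$. This is manifestly linear and bijective, with inverse $x+v\mapsto x+v-f(x)$. It remains to verify that $\Phi$ respects the triple products, which I would check by computing both $\Phi([x+u,y+v,z+w]_{\mathcal{T}_{\theta}})$ and $[\Phi(x+u),\Phi(y+v),\Phi(z+w)]_{\mathcal{T}_{\vartheta}}$. Since $\mathbb{V}$ lies in the annihilator of each extension, both sides reduce to $[x,y,z]_{\mathcal{T}}$ plus a term in $\mathbb{V}$, and the equality of the $\mathbb{V}$-components is exactly the coboundary relation, namely $\vartheta(x,y,z)=\theta(x,y,z)+f([x,y,z]_{\mathcal{T}})$.

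The computation is entirely routine, and the one point needing attention is purely a sign convention: the relation coming from $[\theta]=[\vartheta]$ reads $\theta-\vartheta=\delta f$, while the homomorphism identity wants $\vartheta-\theta=\delta f$, so I would fix $f$ with the correct sign at the outset (replacing $f$ by $-f$ if needed). Lemma~\ref{Lie triple system} guarantees that $\mathcal{T}_{\theta}$ and $\mathcal{T}_{\vartheta}$ are genuine Lie triple systems, so the verified $\Phi$ is an isomorphism between them. There is no genuine obstacle here beyond careful bookkeeping.
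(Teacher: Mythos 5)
Your proposal is correct and takes essentially the same approach as the paper: both reduce the equality of annihilators via Lemma~\ref{annihilator of extension} to showing $\mathrm{Rad}\left( \theta \right) \cap \mathrm{Ann}\left( \mathcal{T}\right) =\mathrm{Rad}\left( \vartheta \right) \cap \mathrm{Ann}\left( \mathcal{T}\right)$, and both exhibit the identical isomorphism $x+v\mapsto x+f(x)+v$, verified by the same computation using the coboundary relation $\vartheta =\theta +\delta f$. Your closing remark about fixing the sign of $f$ at the outset is precisely the bookkeeping that the paper's convention $\vartheta =\theta +\delta f$ handles implicitly.
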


\begin{proof}
Suppose $\left[ \theta \right] =\left[ \vartheta \right] $, then $\vartheta
=\theta +\delta f$ for some $f\in \mathrm{Hom}\left( \mathcal{T},{\mathbb{V}}
\right) $. So for all $x,y,z\in \mathcal{T}$, we have 
$$\vartheta
(x,y,z)=\theta (x,y,z)+f(\left[ x,y,z\right] ).$$
Hence, $\theta (x,y,y)=\left[
x,y,z\right] =0$ if and only if $\vartheta (x,y,z)=\left[ x,y,z\right] =0$.
Therefore, $\mathrm{Rad}\left( \theta \right) \cap \mathrm{Ann}\left(
\mathcal{T}\right) =\mathrm{Rad}\left( \vartheta \right) \cap \mathrm{Ann}
\left( \mathcal{T}\right) $, so $\mathrm{Ann}\left( \mathcal{T}_{\theta
}\right) =\mathrm{Ann}\left( \mathcal{T}_{\vartheta }\right) $ by Lemma \ref%
{annihilator of extension}. Further,  we define a linear map $\varphi :\mathcal{%
L}_{\theta }\rightarrow \mathcal{T}_{\vartheta }$ by $\varphi (x+v):=x+f(x)+v$
for $x\in \mathcal{T}$ and $v\in {\mathbb{V}}$. Clearly $\varphi $ is
bijective. Moreover, if $x,y,z\in \mathcal{T}$ and $u,v,w\in {\mathbb{V}}$,
then we have
\begin{eqnarray*}
\varphi \Bigl( \left[ x+u,y+v,z+w\right] _{\mathcal{T}_{\theta
} }\Bigr)
&=&\varphi \Bigl( \left[ x,y,z\right] _{\mathcal{T}}+\theta \left(
x,y,z\right) \Bigr) \\
&=&\left[ x,y,z\right] _{\mathcal{T}}+f\left( \left[ x,y,z\right] _{\mathcal{%
T}}\right) +\theta \left( x,y,z\right) \\
&=&\left[ x,y,z\right] _{\mathcal{T}}+\delta f\left( x,y,z\right) +\theta
\left( x,y,z\right) \\
&=&\left[ x,y,z\right] _{\mathcal{T}}+\left( \delta f+\theta \right) \left(
x,y,z\right) \\
&=&\left[ x,y,z\right] _{\mathcal{T}}+\vartheta \left( x,y,z\right) \\
&=&\left[ x+f(x)+u,y+f(y)+v,z+f(z)+w\right] _{\mathcal{T}_{\vartheta }} \\
&=&\left[ \varphi (x+u),\varphi (y+v),\varphi (z+w)\right] _{\mathcal{T}
_{\vartheta }},
\end{eqnarray*}
hence, $\mathcal{T}_{\theta }$ and $\mathcal{T}_{\vartheta }$ are isomorphic.
\end{proof}

Let $\mathrm{Aut}\left( \mathcal{T}\right) $ be the automorphism group of
the Lie triple system $\mathcal{T}$ and ${\mathbb{V}}$ be a vector space. For an automorphism $\phi \in
\mathrm{Aut}\left( \mathcal{T}\right) $ and a cocycle $\theta \in Z^{3}\left( \mathcal{T},{\mathbb{V}}\right) $, define $\phi \theta :\mathcal{T}\times \mathcal{T}\times \mathcal{T}\longrightarrow {\mathbb{V}}$ by $\phi
\theta \left( x,y,z\right) :=\theta \left( \phi \left( x\right) ,\phi \left(
y\right) ,\phi \left( z\right) \right) $, for all $x,y,z\in \mathcal{T}$. Then $\phi \theta \in Z^{3}\left( \mathcal{T},{\mathbb{V}}\right) $. This is an action of the group  $\mathrm{Aut}\left(
\mathcal{T}\right) $  on $Z^{3}\left( \mathcal{T},{\mathbb{V}}
\right) $.

\begin{lemma}
Let $\mathcal{T}$ be a Lie triple system and let $\mathbb{V}$ be a vector space. For an automorphism $%
\phi \in \mathrm{Aut}\left( \mathcal{T}\right) $ and a cocycle $\theta \in Z^{3}\left( \mathcal{T},{\mathbb{V}}\right) $, the map $\phi \theta \in
B^{3}\left( \mathcal{T},{\mathbb{V}}\right) $ if and only if $\theta \in
B^{3}\left( \mathcal{T},{\mathbb{V}}\right) $.
\end{lemma}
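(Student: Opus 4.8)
The plan is to show that the subspace $B^{3}\left( \mathcal{T},{\mathbb{V}}\right)$ of coboundaries is invariant under the $\mathrm{Aut}\left( \mathcal{T}\right)$-action introduced above; once this is established, both implications follow, the second one by passing to $\phi^{-1}$. The only structural fact I need is that every automorphism $\phi$ is in particular a homomorphism of Lie triple systems, so that $\phi(\left[ x,y,z\right] )=\left[ \phi(x),\phi(y),\phi(z)\right]$ for all $x,y,z\in\mathcal{T}$.

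For the forward direction, I would start from $\theta=\delta f$ with $f\in\mathrm{Hom}\left( \mathcal{T},{\mathbb{V}}\right)$ and compute $\phi\theta$ directly. Using the definition of the action, the hypothesis $\theta=\delta f$, and the homomorphism property of $\phi$,
\begin{align*}
\phi\theta(x,y,z)&=\theta(\phi(x),\phi(y),\phi(z))=f(\left[ \phi(x),\phi(y),\phi(z)\right] )\\
&=f(\phi(\left[ x,y,z\right] ))=(f\circ\phi)(\left[ x,y,z\right] ),
\end{align*}
for all $x,y,z\in\mathcal{T}$. Since $f\circ\phi\in\mathrm{Hom}\left( \mathcal{T},{\mathbb{V}}\right)$, the right-hand side is exactly $\delta(f\circ\phi)(x,y,z)$, hence $\phi\theta=\delta(f\circ\phi)\in B^{3}\left( \mathcal{T},{\mathbb{V}}\right)$. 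This proves that $\theta\in B^{3}\left( \mathcal{T},{\mathbb{V}}\right)$ implies $\phi\theta\in B^{3}\left( \mathcal{T},{\mathbb{V}}\right)$.

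For the converse I would exploit that $\phi^{-1}\in\mathrm{Aut}\left( \mathcal{T}\right)$ and that the action recovers $\theta$ after applying $\phi$ and then $\phi^{-1}$: indeed $\phi^{-1}(\phi\theta)(x,y,z)=(\phi\theta)(\phi^{-1}(x),\phi^{-1}(y),\phi^{-1}(z))=\theta(x,y,z)$, so $\phi^{-1}(\phi\theta)=\theta$. Thus, assuming $\phi\theta\in B^{3}\left( \mathcal{T},{\mathbb{V}}\right)$, I apply the direction just proved to the automorphism $\phi^{-1}$ and the cocycle $\phi\theta$ to obtain $\theta=\phi^{-1}(\phi\theta)\in B^{3}\left( \mathcal{T},{\mathbb{V}}\right)$.

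I do not expect any genuine obstacle here: the argument is a one-line computation together with the observation that $\mathrm{Aut}\left( \mathcal{T}\right)$ acts invertibly. The only point deserving care is the bookkeeping identity $\phi^{-1}(\phi\theta)=\theta$, which is immediate from the definition of the action but must be stated explicitly so that the converse is not circular.
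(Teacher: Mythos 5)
Your proof is correct and follows essentially the same route as the paper: the forward direction is the identical computation $\phi\theta=\delta(f\circ\phi)$, and your converse, obtained by applying the proved direction to $\phi^{-1}$ and $\phi\theta$ via the identity $\phi^{-1}(\phi\theta)=\theta$, is just a repackaging of the paper's direct computation showing $\theta=\delta(f\circ\phi^{-1})$. Your explicit verification of the bookkeeping identity is a sensible precaution, since the paper's action is contravariant in $\phi$, but it changes nothing of substance.
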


\begin{proof}
Let
$\theta \in B^{3}\left( \mathcal{T},{\mathbb{V}}\right) $,
so $\theta =\delta f$ for some $f\in \mathrm{Hom}\left( \mathcal{T},{\mathbb{%
V}}\right) $. We have

\begin{equation*}
\begin{split}
    \phi \theta(x, y, z) & = \theta(\phi(x), \phi(y), \phi(z)) = \delta f(\phi(x), \phi(y), \phi(z))
     = f(\left[\phi(x), \phi(y), \phi(z)\right])\\ &=f\left( \phi \left( \left[ x,y,z\right] \right) \right)= \delta(f\circ \phi)(x, y, z),
\end{split}
\end{equation*}
for any $x,y,z\in \mathcal{T} $, hence $\phi \theta =\delta (f\circ \phi )\in B^{3}\left( \mathcal{L%
},{\mathbb{V}}\right) $. Conversely, Suppose $\phi \theta  \in B^{3}\left( \mathcal{T},{\mathbb{V}}\right) $, so $\phi \theta =\delta f$ for some $%
f\in \mathrm{Hom}\left( \mathcal{T},{\mathbb{V}}\right) $, then%
\begin{eqnarray*}
\theta \left( x,y,z\right) &=&\theta \left( \phi \left( \phi ^{-1}\left(
x\right) \right) ,\phi \left( \phi ^{-1}\left( y\right) \right) ,\phi \left(
\phi ^{-1}\left( z\right) \right) \right)  =\phi \theta \left( \phi ^{-1}\left( x\right) ,\phi ^{-1}\left( y\right)
,\phi ^{-1}\left( z\right) \right)\\
&=& \delta f (\phi^{-1}(x), \phi^{-1}(y), \phi^{-1}(z))
=f(\left[\phi^{-1}(x), \phi^{-1}(y), \phi^{-1}(z)\right] ) \\
&=&f\left( \phi ^{-1}\left( \left[ x,y,z\right] \right) \right) =\delta \left( f\circ \phi ^{-1}\right) \left( x,y,z\right),
\end{eqnarray*}
for any $x,y,z\in \mathcal{T} $, hence $\theta =\delta (f\circ \phi ^{-1})\in B^{3}\left( \mathcal{L%
},{\mathbb{V}}\right) $.
\end{proof}

Since $B^{3}\left( \mathcal{T},{\mathbb{V}}\right) $ is invariant under the action of $%
\mathrm{Aut}\left( \mathcal{T}\right) $,  we have an induced action of $\mathrm{Aut}\left( \mathcal{T}
\right) $   on $H^{3}\left( \mathcal{T},{\mathbb{V}}\right) $.
Our goal is to find all annihilator extensions of a  Lie triple system  $\mathcal{T}$ by a  vector space $\mathbb{V}$. In order to solve the isomorphism problem, we need to study the action of $\mathrm{Aut}\left( \mathcal{T}
\right) $   on $H^{3}\left( \mathcal{T},{\mathbb{V}}\right) $.

\begin{lemma}
Let $\mathcal{T}_{1},\mathcal{T}_{2}$ be two Lie triple systems and let ${\mathbb{V}}$ be a
vector space. If $\mathcal{T}_{1}$ and $\mathcal{T}_{2}$ are isomorphic, then it is verified the following equivalences:
\begin{enumerate}
\item[(1)] $Z^{3}\left( \mathcal{T}_{1},{\mathbb{V}}\right) \cong
Z^{3}\left( \mathcal{T}_{2},{\mathbb{V}}\right) $.

\item[(2)] $B^{3}\left( \mathcal{T}_{1},{\mathbb{V}}\right) \cong
B^{3}\left( \mathcal{T}_{2},{\mathbb{V}}\right) $.

\item[(3)] $H^{3}\left( \mathcal{T}_{1},{\mathbb{V}}\right) \cong
H^{3}\left( \mathcal{T}_{2},{\mathbb{V}}\right) $.
\end{enumerate}
\end{lemma}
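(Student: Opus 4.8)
The plan is to transport everything along a fixed isomorphism of Lie triple systems $\psi : \mathcal{T}_1 \to \mathcal{T}_2$ by pullback. For a trilinear map $\theta : \mathcal{T}_2 \times \mathcal{T}_2 \times \mathcal{T}_2 \to \mathbb{V}$, define $\psi^{\ast}\theta : \mathcal{T}_1 \times \mathcal{T}_1 \times \mathcal{T}_1 \to \mathbb{V}$ by $\psi^{\ast}\theta(x,y,z) := \theta(\psi(x),\psi(y),\psi(z))$. This is manifestly linear in $\theta$, and since $\psi$ is bijective the assignment $\theta \mapsto \psi^{\ast}\theta$ is a linear bijection from the space of all trilinear maps on $\mathcal{T}_2$ onto the space of all trilinear maps on $\mathcal{T}_1$, with inverse $(\psi^{-1})^{\ast}$. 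Everything then reduces to checking that this single map restricts appropriately to $Z^3$ and to $B^3$.

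First I would show that $\psi^{\ast}$ sends $Z^3(\mathcal{T}_2,\mathbb{V})$ into $Z^3(\mathcal{T}_1,\mathbb{V})$. Conditions (B1) and (B2) are purely statements about the symmetry of the arguments, and they transfer immediately because $\psi^{\ast}$ merely precomposes each slot with $\psi$. The only identity requiring the homomorphism property is (B3): writing out $\psi^{\ast}\theta(u,v,[x,y,z]_{\mathcal{T}_1}) + \psi^{\ast}\theta([v,u,x]_{\mathcal{T}_1},y,z) + \psi^{\ast}\theta(x,[v,u,y]_{\mathcal{T}_1},z) + \psi^{\ast}\theta(x,y,[v,u,z]_{\mathcal{T}_1})$ and using $\psi([a,b,c]_{\mathcal{T}_1}) = [\psi(a),\psi(b),\psi(c)]_{\mathcal{T}_2}$ to pull $\psi$ through every bracket, the expression becomes exactly the (B3) relation for $\theta$ evaluated at the images $\psi(u),\psi(v),\psi(x),\psi(y),\psi(z)$, which vanishes because $\theta\in Z^3(\mathcal{T}_2,\mathbb{V})$. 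Applying the same argument to $\psi^{-1}$ shows that $(\psi^{-1})^{\ast}$ maps $Z^3(\mathcal{T}_1,\mathbb{V})$ into $Z^3(\mathcal{T}_2,\mathbb{V})$, so $\psi^{\ast}$ restricts to the desired isomorphism in (1).

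Next, for (2), I would compute the effect on coboundaries. For $f \in \mathrm{Hom}(\mathcal{T}_2,\mathbb{V})$, the same pull-through gives $\psi^{\ast}(\delta f)(x,y,z) = f([\psi(x),\psi(y),\psi(z)]_{\mathcal{T}_2}) = f(\psi([x,y,z]_{\mathcal{T}_1})) = \delta(f\circ\psi)(x,y,z)$, hence $\psi^{\ast}(\delta f) = \delta(f\circ\psi) \in B^3(\mathcal{T}_1,\mathbb{V})$. Because $\psi$ is an isomorphism, $f \mapsto f\circ\psi$ is a bijection $\mathrm{Hom}(\mathcal{T}_2,\mathbb{V}) \to \mathrm{Hom}(\mathcal{T}_1,\mathbb{V})$, so $\psi^{\ast}$ carries $B^3(\mathcal{T}_2,\mathbb{V})$ isomorphically onto $B^3(\mathcal{T}_1,\mathbb{V})$, proving (2). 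Finally, (3) is formal: $\psi^{\ast}$ is a linear isomorphism $Z^3(\mathcal{T}_2,\mathbb{V}) \to Z^3(\mathcal{T}_1,\mathbb{V})$ taking the subspace $B^3(\mathcal{T}_2,\mathbb{V})$ onto $B^3(\mathcal{T}_1,\mathbb{V})$, so it descends to the quotients and yields the isomorphism $H^3(\mathcal{T}_2,\mathbb{V}) \cong H^3(\mathcal{T}_1,\mathbb{V})$ given by $[\theta]\mapsto[\psi^{\ast}\theta]$.

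I do not expect a serious obstacle here; the entire content is the verification that (B3) survives the pullback, and that step is mechanical once $\psi$ is pushed through each bracket. I would only remark that the computation exactly parallels the one in the preceding lemma, with the automorphism $\phi$ of a single system replaced by the isomorphism $\psi$ between the two systems.
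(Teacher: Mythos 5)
Your proof is correct and takes essentially the same route as the paper: the paper fixes an isomorphism $\phi:\mathcal{T}_1\to\mathcal{T}_2$, uses the pullback $\tau(\theta)=\phi^{-1}\theta$ (your $(\psi^{-1})^{\ast}$) as the isomorphism on $Z^{3}$, shows $\tau(\delta f)=\delta(f\circ\phi^{-1})$ with inverse built from $f\mapsto f\circ\phi$ for part (2), and deduces (3) formally from (1) and (2). The only differences are cosmetic---you transport in the opposite direction and spell out the (B1)--(B3) verification that the paper leaves implicit.
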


\begin{proof}
$\left( 1\right) $ Consider an isomorphism $\phi :\mathcal{T}_{1}\longrightarrow \mathcal{T}_{2}$.
We can prove that $\theta \in Z^{3}\left( \mathcal{T}_{1},{%
\mathbb{V}}\right) $ if and only if $\phi ^{-1}\theta \in Z^{3}\left(
\mathcal{T}_{2},{\mathbb{V}}\right) $, where  $\phi ^{-1} \theta :\mathcal{T}_{2}\times \mathcal{T}_{2}\times \mathcal{T}_{2}\longrightarrow {\mathbb{V}}$ is defined by $\phi ^{-1}
\theta \left( x,y,z\right) :=\theta \left( \phi ^{-1} \left( x\right) ,\phi ^{-1} \left(
y\right) ,\phi ^{-1} \left( z\right) \right) $, for $x,y,z\in \mathcal{T}_{2}$. This defines an isomorphism $\tau :%
Z^{3}\left( \mathcal{T}_{1},{\mathbb{V}}\right) \longrightarrow
Z^{3}\left( \mathcal{T}_{2},{\mathbb{V}}\right) $ by $\tau \left(
\theta \right) :=\phi ^{-1}\theta $.\\
$\left( 2\right) $ Take $\theta \in B^{3}\left( \mathcal{T}_{1},{\mathbb{V}}\right) $, so $\theta =\delta f$
for some $f\in \mathrm{Hom}\left( \mathcal{T}_{1},{\mathbb{V}}\right) $. Then $\tau \left( \theta \right) =\tau \left( \delta f \right) =\phi ^{-1} \left( \delta f \right) =
\delta (f\circ \phi ^{-1})\in \mathcal{B}
^{3}\left( \mathcal{T}_{2},{\mathbb{V}}\right) $, since $f\circ \phi ^{-1}\in
\mathrm{Hom}\left( \mathcal{T}_{2},{\mathbb{V}}\right) $. On the other hand,
take $\vartheta \in B^{3}\left( \mathcal{T}_{2},{\mathbb{V}}\right) $, so $\vartheta =\delta f\in B^{3}\left( \mathcal{T}_{2},{\mathbb{V}}
\right) $ for some $f\in \mathrm{Hom}\left( \mathcal{T}_{2},{\mathbb{V}}
\right) $. Then $\tau \left( \theta \right) =\vartheta $, where $\theta
=\delta (f\circ \phi )\in B^{3}\left( \mathcal{T}_{1},{\mathbb{V}}
\right) $. So $\tau :B^{3}\left( \mathcal{T}_{1},{\mathbb{V}}
\right) \longrightarrow B^{3}\left( \mathcal{T}_{2},{\mathbb{V}}
\right) $ is an isomorphism.\\ $\left( 3\right) $ It follows from $\left(
1\right) $\ and $\left( 2\right) $.
\end{proof}

\begin{lemma}
\label{iso1}
Let $\mathcal{T}$ be a Lie triple system and let $\mathbb{V}$ be a vector space.
If $\theta \in Z^{3}\left( \mathcal{T},{\mathbb{V}}
\right) $ and $\phi \in \mathrm{Aut}\left( \mathcal{T}\right) $,  then $%
\mathcal{T}_{\theta }\cong \mathcal{T}_{\phi \theta }$ and $\mathrm{Rad}
\left( \theta \right) \cong \mathrm{Rad}\left( \phi \theta \right) $.
\end{lemma}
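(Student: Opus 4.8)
The plan is to exhibit an explicit isomorphism built directly from $\phi$, together with a restriction of $\phi$ identifying the two radicals. The natural candidate for the first part is the map $\Phi : \mathcal{T}_{\phi\theta} \longrightarrow \mathcal{T}_{\theta}$ defined by $\Phi(x+v) := \phi(x) + v$ for $x \in \mathcal{T}$ and $v \in \mathbb{V}$, that is, $\Phi = \phi \oplus \mathrm{id}_{\mathbb{V}}$. Since $\phi$ is bijective on $\mathcal{T}$ and $\Phi$ acts as the identity on $\mathbb{V}$, the map $\Phi$ is automatically a linear bijection; the only point requiring verification is that it respects the two triple products.

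First I would unwind the two products on the underlying space $\mathcal{T} \oplus \mathbb{V}$. By definition, $\left[ x+u, y+v, z+w \right]_{\mathcal{T}_{\phi\theta}} = \left[ x,y,z \right]_{\mathcal{T}} + (\phi\theta)(x,y,z) = \left[ x,y,z \right]_{\mathcal{T}} + \theta(\phi(x), \phi(y), \phi(z))$. Applying $\Phi$ and using that $\phi \in \mathrm{Aut}(\mathcal{T})$, so that $\phi(\left[ x,y,z \right]_{\mathcal{T}}) = \left[ \phi(x),\phi(y),\phi(z) \right]_{\mathcal{T}}$, one obtains $\Phi\!\left( \left[ x+u,y+v,z+w \right]_{\mathcal{T}_{\phi\theta}} \right) = \left[ \phi(x),\phi(y),\phi(z) \right]_{\mathcal{T}} + \theta(\phi(x),\phi(y),\phi(z))$, which is precisely $\left[ \phi(x)+u, \phi(y)+v, \phi(z)+w \right]_{\mathcal{T}_\theta} = \left[ \Phi(x+u), \Phi(y+v), \Phi(z+w) \right]_{\mathcal{T}_\theta}$. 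Hence $\Phi$ is an isomorphism of Lie triple systems and $\mathcal{T}_\theta \cong \mathcal{T}_{\phi\theta}$.

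For the second assertion, I would show that the same $\phi$ carries one radical onto the other. Concretely, for $x \in \mathcal{T}$ one has $x \in \mathrm{Rad}(\phi\theta)$ if and only if $(\phi\theta)(x, y, z) = \theta(\phi(x), \phi(y), \phi(z)) = 0$ for all $y, z \in \mathcal{T}$; since $\phi$ is surjective, $\phi(y)$ and $\phi(z)$ range over all of $\mathcal{T}$, so this condition is equivalent to $\theta(\phi(x), \mathcal{T}, \mathcal{T}) = 0$, i.e. $\phi(x) \in \mathrm{Rad}(\theta)$. Thus $\phi$ restricts to a linear bijection $\mathrm{Rad}(\phi\theta) \longrightarrow \mathrm{Rad}(\theta)$, giving $\mathrm{Rad}(\theta) \cong \mathrm{Rad}(\phi\theta)$.

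There is no genuine obstacle here, as the argument is purely formal; this is essentially the same computation pattern already used in the proof of Lemma \ref{equal}. The only points demanding care are bookkeeping the direction of the substitution and the surjectivity appeal: one must place $\phi$ (not $\phi^{-1}$) on the $\mathcal{T}$-component of $\Phi$ so that the automorphism identity matches the shift produced by replacing $\theta$ with $\phi\theta$, and one must invoke surjectivity of $\phi$ to pass from the vanishing of $\theta(\phi(x), \phi(y), \phi(z))$ for all $y,z$ to the radical condition on $\phi(x)$.
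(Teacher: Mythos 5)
Your proposal is correct and coincides with the paper's own proof: the authors use exactly the same map $\varphi(x+v)=\phi(x)+v$ from $\mathcal{T}_{\phi\theta}$ to $\mathcal{T}_{\theta}$, the same product computation via the automorphism identity, and the same observation that $\phi\theta(x,\mathcal{T},\mathcal{T})=\theta(\phi(x),\mathcal{T},\mathcal{T})$ to restrict $\phi$ to a bijection between the radicals. If anything, you are slightly more explicit than the paper in flagging the surjectivity of $\phi$ needed for that last equivalence, which the paper leaves implicit.
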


\begin{proof}
Consider the linear map $\varphi :\mathcal{T}_{\phi \theta }\longrightarrow
\mathcal{T}_{\theta }$ defined by $\varphi \left( x+v\right) :=\phi \left( x\right)
+v $, for $x\in \mathcal{T}$ and $v\in {\mathbb{V}}$. Then $\varphi $ is bijective. For any $x,y,z\in \mathcal{T}$
and $u,v,w\in $ ${\mathbb{V}}$, we have%
\begin{eqnarray*}
\varphi \Bigl(\left[ x+u,y+v,z+w\right] _{\mathcal{T}_{\phi \theta }}\Bigr) &=&\varphi
\Bigl( \left[ x,y,z\right] _{\mathcal{T}}+\phi \theta \left(
x,y,z\right) \Bigr) \\
&=&\phi( \left[ x,y,z\right] _{\mathcal{T}})+\phi \theta \left(
x,y,z\right)  \\
&=&\left[ \phi \left( x\right) ,\phi \left( y\right) ,\phi \left( z\right) %
\right] _{\mathcal{T}}+\theta \left( \phi \left( x\right) ,\phi \left(
y\right) ,\phi \left( z\right) \right) \\
&=&\left[ \varphi \left( x+u\right) ,\varphi \left( y+v\right) ,\varphi
\left( z+w\right) \right] _{\mathcal{T}_{\theta }},
\end{eqnarray*}
hence $\varphi $ is an isomorphism of Lie triple systems. With respect to the radical,  for $x\in \mathcal{T}$  we obtain $%
\phi \theta \left( x,\mathcal{T},\mathcal{T}\right) =\theta \left( \phi
\left( x\right) ,\mathcal{T},\mathcal{T}\right) $. So $x\in \mathrm{Rad}
\left( \phi \theta \right) $ if and only if $\phi \left( x\right) \in \mathrm{Rad}
\left( \theta \right) $, therefore $\varphi \mid_{\mathrm{Rad}\left( \phi \theta
\right)} :\mathrm{Rad}\left( \phi \theta
\right) \longrightarrow \mathrm{Rad}\left(  \theta \right) $ is
bijective.
\end{proof}

Denote as usually  the set of linear isomorphisms
from ${\mathbb{V}}$ to itself by $GL\left( {\mathbb{V}}\right) $. Given an automorphism $\psi \in GL\left( {\mathbb{V}}\right) $ and a cocycle  $\theta \in Z^{3}\left( \mathcal{T},{\mathbb{V}}\right) $,
define $\psi \theta \left( x,y,z\right): =\psi \left( \theta \left(
x,y,z\right) \right) $, for any $x,y,z\in \mathcal{T}$. Then $\psi \theta \in Z^{3}\left( \mathcal{T},{\mathbb{V}}\right) $. So  it is an action of the group  $GL\left( {\mathbb{V}}\right) $   on $%
Z^{3}\left( \mathcal{T},{\mathbb{V}}\right) $.

\begin{lemma}
Let $\mathcal{T}$ be a Lie triple system and let $\mathbb{V}$ be a vector space. For $%
\theta \in Z^{3}\left( \mathcal{T},{\mathbb{V}}\right) $ and $\psi
\in GL\left( {\mathbb{V}}\right) $, we have  $\psi \theta \in B^{3}\left( \mathcal{T},{\mathbb{V}}\right) $ if and only if $\theta \in B^{3}\left( \mathcal{T},{\mathbb{V}}\right) $.
\end{lemma}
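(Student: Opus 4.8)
The plan is to prove both directions of the equivalence by exhibiting an explicit coboundary in each case, exactly mirroring the structure of the preceding lemma which handled the action of $\mathrm{Aut}(\mathcal{T})$ rather than $GL(\mathbb{V})$. The key observation is that the action of $\psi \in GL(\mathbb{V})$ commutes with the coboundary operator $\delta$: for any linear map $f \in \mathrm{Hom}(\mathcal{T},\mathbb{V})$, the composition $\psi \circ f$ is again an element of $\mathrm{Hom}(\mathcal{T},\mathbb{V})$, and applying $\psi$ after $\delta f$ should yield precisely $\delta(\psi \circ f)$.

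First I would prove the forward direction. Suppose $\theta \in B^{3}(\mathcal{T},\mathbb{V})$, so that $\theta = \delta f$ for some $f \in \mathrm{Hom}(\mathcal{T},\mathbb{V})$. Then for all $x,y,z \in \mathcal{T}$ I would compute
\begin{equation*}
\psi\theta(x,y,z) = \psi(\theta(x,y,z)) = \psi(\delta f(x,y,z)) = \psi(f([x,y,z])) = (\psi \circ f)([x,y,z]) = \delta(\psi \circ f)(x,y,z).
\end{equation*}
Since $\psi \circ f \in \mathrm{Hom}(\mathcal{T},\mathbb{V})$, this shows $\psi\theta = \delta(\psi \circ f) \in B^{3}(\mathcal{T},\mathbb{V})$.

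For the converse I would run the identical argument with $\psi$ replaced by its inverse. Because $\psi \in GL(\mathbb{V})$, the map $\psi^{-1}$ exists and lies in $GL(\mathbb{V})$, so the forward direction applied to the cocycle $\psi\theta$ and the automorphism $\psi^{-1}$ gives: if $\psi\theta \in B^{3}(\mathcal{T},\mathbb{V})$, then $\psi^{-1}(\psi\theta) = \theta \in B^{3}(\mathcal{T},\mathbb{V})$. Alternatively, one can unwind directly: assuming $\psi\theta = \delta g$ for some $g \in \mathrm{Hom}(\mathcal{T},\mathbb{V})$, apply $\psi^{-1}$ to both sides and use $\psi^{-1} \circ g \in \mathrm{Hom}(\mathcal{T},\mathbb{V})$ to obtain $\theta = \delta(\psi^{-1} \circ g)$.

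I expect no serious obstacle here: unlike the automorphism version, where the identity $f(\phi([x,y,z])) = f([\phi x, \phi y, \phi z])$ required $\phi$ to respect the triple product, here $\psi$ acts only on the target space $\mathbb{V}$ and never touches the arguments $x,y,z$, so no compatibility with the Lie triple structure of $\mathcal{T}$ is needed and the verification is purely formal. The only point deserving care is confirming that $\psi \circ f$ (respectively $\psi^{-1} \circ g$) is genuinely a linear map $\mathcal{T} \to \mathbb{V}$, which is immediate since it is a composition of a linear map with a linear automorphism of $\mathbb{V}$.
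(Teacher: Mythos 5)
Your proposal is correct and follows essentially the same argument as the paper: in one direction $\theta=\delta f$ gives $\psi\theta=\delta(\psi\circ f)$, and in the other $\psi\theta=\delta f$ gives $\theta=\delta(\psi^{-1}\circ f)$, exactly as in the paper's proof. Your closing remark that $\psi$ never touches the arguments in $\mathcal{T}$, so no compatibility with the triple product is needed, correctly identifies why this lemma is formally easier than its $\mathrm{Aut}(\mathcal{T})$ counterpart.
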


\begin{proof}
Suppose that $\psi \theta \in B
^{3}\left( \mathcal{T},{\mathbb{V}}\right) $. Let $\psi \theta =\delta f$ for some $f\in \mathrm{Hom}\left( \mathcal{T},{%
\mathbb{V}}\right) $. Then
\begin{equation*}
\theta (x,y,z)=\psi ^{-1}\psi (\theta (x,y,z))=\psi ^{-1}(\delta
f(x,y,z))=(\psi ^{-1}\circ f)(\left[ x,y,z\right] )=\delta (\psi ^{-1}\circ
f)(x,y,z),
\end{equation*}
 for any $x,y,z\in \mathcal{T}$. Hence, $\theta =\delta (\psi ^{-1}\circ f)\in B^{3}\left( \mathcal{T},{\mathbb{V}}\right) $.  

Conversely, assume that $\theta \in B^{3}\left( \mathcal{T},{\mathbb{V}}\right) $ and let $\theta =\delta f$ for some $f\in
\mathrm{Hom}\left( \mathcal{T},{\mathbb{V}}\right) $. Then%
\begin{equation*}
\psi \theta \left( x,y,z\right) =\psi (\delta f)\left( x,y,z\right) =\left(
\psi \circ f\right) \left( \left[ x,y,z\right] \right) =\delta (\psi \circ
f)\left( x,y,z\right),
\end{equation*}
 for any $x,y,z\in \mathcal{T}$. So $\psi \theta =\delta (\psi \circ f)\in B^{3}\left( \mathcal{T},{%
\mathbb{V}}\right) $.
\end{proof}

Since $B^{3}\left( \mathcal{T},{\mathbb{V}}\right) $ is invariant under the action of $%
GL\left( {\mathbb{V}}\right) $  in the space $Z^{3}\left(
\mathcal{T},{\mathbb{V}}\right) $,  we have an induced action of $%
GL\left( {\mathbb{V}}\right) $   on $H^{3}\left( \mathcal{T},{\mathbb{V}}\right) $.

\begin{lemma}
\label{iso2}Let $\mathcal{T}$ be a Lie triple system and let $\mathbb{V}$ be a vector space. If  $\theta \in B^{3}\left( \mathcal{T},{\mathbb{V}}
\right) $ and $\psi \in GL({\mathbb{V}})$, then $\mathcal{T}_{\theta }\cong
\mathcal{T}_{\psi \theta }$ and $\mathrm{Rad}\left( \theta \right) =\mathrm{%
Rad}\left( \psi \theta \right) $.
\end{lemma}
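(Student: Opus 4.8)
The plan is to run the exact analogue of the proof of Lemma \ref{iso1}, but with the roles of the two factors interchanged: there $\phi \in \mathrm{Aut}(\mathcal{T})$ acted on the arguments of $\theta$, whereas here $\psi \in GL(\mathbb{V})$ acts on the values in $\mathbb{V}$. Since $\psi$ alters only the $\mathbb{V}$-component and leaves $\mathcal{T}$ fixed, I expect the two radicals (which are subsets of $\mathcal{T}$) to coincide literally as sets, which is why the statement asserts equality rather than merely an isomorphism. Note that the construction of $\mathcal{T}_\theta$ and $\mathcal{T}_{\psi\theta}$ is legitimate, as $B^{3}(\mathcal{T},\mathbb{V}) \subseteq Z^{3}(\mathcal{T},\mathbb{V})$ and the preceding lemma guarantees $\psi\theta$ is again a cocycle.

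First I would settle the radical. By definition $\psi\theta(x,y,z) = \psi(\theta(x,y,z))$, so for fixed $x \in \mathcal{T}$ one has $\psi\theta(x,\mathcal{T},\mathcal{T}) = \psi(\theta(x,\mathcal{T},\mathcal{T}))$. As $\psi$ is injective, it annihilates a subset of $\mathbb{V}$ exactly when that subset is already zero, so $\psi\theta(x,\mathcal{T},\mathcal{T}) = 0$ if and only if $\theta(x,\mathcal{T},\mathcal{T}) = 0$; hence $\mathrm{Rad}(\psi\theta) = \mathrm{Rad}(\theta)$. Next, for the isomorphism, I would define a linear map $\varphi : \mathcal{T}_\theta \longrightarrow \mathcal{T}_{\psi\theta}$ by $\varphi(x+v) := x + \psi(v)$ for $x \in \mathcal{T}$ and $v \in \mathbb{V}$. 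Since $\psi$ is bijective on $\mathbb{V}$ and $\varphi$ is the identity on the $\mathcal{T}$-summand, $\varphi$ is a linear bijection. The only computation to carry out is the homomorphism property: for $x,y,z \in \mathcal{T}$ and $u,v,w \in \mathbb{V}$ I would expand
\begin{equation*}
\varphi\bigl([x+u,y+v,z+w]_{\mathcal{T}_\theta}\bigr) = \varphi\bigl([x,y,z]_{\mathcal{T}} + \theta(x,y,z)\bigr) = [x,y,z]_{\mathcal{T}} + \psi(\theta(x,y,z)),
\end{equation*}
and compare it with
\begin{equation*}
[\varphi(x+u),\varphi(y+v),\varphi(z+w)]_{\mathcal{T}_{\psi\theta}} = [x+\psi(u),y+\psi(v),z+\psi(w)]_{\mathcal{T}_{\psi\theta}} = [x,y,z]_{\mathcal{T}} + \psi\theta(x,y,z).
\end{equation*}
Because $\psi\theta(x,y,z) = \psi(\theta(x,y,z))$ by definition, the two expressions agree, so $\varphi$ is an isomorphism of Lie triple systems and $\mathcal{T}_\theta \cong \mathcal{T}_{\psi\theta}$.

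The verification is entirely routine and there is no genuine obstacle; the single point demanding care is the bookkeeping in the choice of $\varphi$. One must let $\varphi$ act as $\psi$ on $\mathbb{V}$ and as the identity on $\mathcal{T}$ (and not the reverse), so that the $\mathbb{V}$-component $\theta(x,y,z)$ of the bracket in $\mathcal{T}_\theta$ is carried to $\psi(\theta(x,y,z))$, which is precisely the $\mathbb{V}$-component $\psi\theta(x,y,z)$ of the bracket in $\mathcal{T}_{\psi\theta}$. With that matching in place the two displayed lines coincide termwise and both claims follow at once.
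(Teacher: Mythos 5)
Your proposal is correct and matches the paper's own proof essentially verbatim: the same map $\varphi(x+v) = x+\psi(v)$, the same one-line verification of the homomorphism property, and the same injectivity-of-$\psi$ argument for the literal equality of radicals (the paper phrases it via $\psi^{-1}$). Your side remark that the construction makes sense because $B^{3}(\mathcal{T},\mathbb{V})\subseteq Z^{3}(\mathcal{T},\mathbb{V})$ is a sensible observation, since the proof in fact works for any cocycle $\theta \in Z^{3}(\mathcal{T},\mathbb{V})$, which is how the lemma is later applied.
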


\begin{proof}
Define a linear map $\varphi :\mathcal{T}_{\theta }\longrightarrow \mathcal{T%
}_{\psi \theta }$ by $\varphi (x+v):=x+\psi \left( v\right) $ for $x\in
\mathcal{T}$ and $v\in {\mathbb{V}}$. Then $\varphi $ is a bijective map.
Also, for any $x,y,z\in \mathcal{T}$ and $u,v,w\in {\mathbb{V}}$, we have%
\begin{eqnarray*}
\varphi \Bigl( \left[ x+u,y+v,z+w\right] _{\mathcal{T}_{\theta }}\Bigr)
=\varphi \Bigl( \left[ x,y,z\right] _{\mathcal{T}}+\theta \left(
x,y,z\right) \Bigr) =\left[ x,y,z\right] _{\mathcal{T}}+ (\psi \theta) \left( x,y,z\right) =\left[   x+u  ,  y+v   ,z+w  \right] _{%
\mathcal{T}_{\psi \theta }},
\end{eqnarray*}
hence, $\mathcal{T}_{\theta }\cong \mathcal{T}_{\psi \theta }$. With respect to the radical, since $\theta
\left( x,\mathcal{T},\mathcal{T}\right) =\psi ^{-1}\psi (\theta (x,\mathcal{T%
},\mathcal{T}))=\psi ^{-1} ((\psi \theta) (x,\mathcal{T%
},\mathcal{T}))$  we conclude that $x\in \mathrm{Rad}
\left( \psi \theta \right) $ if and only if $  x  \in \mathrm{Rad}
\left( \theta \right) $.
 
\end{proof}

\begin{lemma}
\label{iso3}Let $\mathcal{T}$ be a Lie triple system, let $\mathbb{V}$ be a vector space and let $\theta ,\vartheta \in Z^{3}\left( \mathcal{T},{%
\mathbb{V}}\right) $. If there exist a map $\phi \in Aut\left( \mathcal{T}
\right) $ and a map $\psi \in GL({\mathbb{V}})$ such that $\left[ \phi
\theta \right] =\left[ \psi \vartheta \right] $, then $\mathcal{T}_{\theta
}\cong \mathcal{T}_{\vartheta }$.
\end{lemma}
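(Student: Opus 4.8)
The plan is to obtain this statement purely by chaining together the three isomorphism results already established, so that no new construction is needed. The hypothesis $\left[\phi\theta\right] = \left[\psi\vartheta\right]$ in $H^{3}\left(\mathcal{T},{\mathbb{V}}\right)$ says that the cocycles $\phi\theta$ and $\psi\vartheta$ are cohomologous, and the conclusion should follow by producing a string of isomorphisms joining $\mathcal{T}_{\theta}$ to $\mathcal{T}_{\vartheta}$ through the intermediate extensions $\mathcal{T}_{\phi\theta}$ and $\mathcal{T}_{\psi\vartheta}$.

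First I would apply Lemma \ref{iso1} to the automorphism $\phi \in \mathrm{Aut}\left(\mathcal{T}\right)$ and the cocycle $\theta$, obtaining $\mathcal{T}_{\theta}\cong \mathcal{T}_{\phi\theta}$. Next I would apply Lemma \ref{equal} to the cohomologous pair $\phi\theta,\psi\vartheta \in Z^{3}\left(\mathcal{T},{\mathbb{V}}\right)$: since $\left[\phi\theta\right]=\left[\psi\vartheta\right]$, that lemma yields $\mathcal{T}_{\phi\theta}\cong \mathcal{T}_{\psi\vartheta}$. Finally I would apply Lemma \ref{iso2} to the linear isomorphism $\psi \in GL\left({\mathbb{V}}\right)$ and the cocycle $\vartheta$, giving $\mathcal{T}_{\vartheta}\cong \mathcal{T}_{\psi\vartheta}$. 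Composing these three isomorphisms and using that isomorphism of Lie triple systems is transitive and symmetric,
\[
\mathcal{T}_{\theta}\cong \mathcal{T}_{\phi\theta}\cong \mathcal{T}_{\psi\vartheta}\cong \mathcal{T}_{\vartheta},
\]
which is exactly the desired conclusion.

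The only point requiring care, and the place where I expect the sole (minor) obstacle, is the invocation of Lemma \ref{iso2}: as literally stated it assumes $\vartheta \in B^{3}\left(\mathcal{T},{\mathbb{V}}\right)$, whereas here $\vartheta$ is an arbitrary cocycle. However, the map $\varphi(x+v):=x+\psi\left(v\right)$ constructed in the proof of that lemma is bijective and multiplicative for every $\theta \in Z^{3}\left(\mathcal{T},{\mathbb{V}}\right)$, so the isomorphism $\mathcal{T}_{\vartheta}\cong \mathcal{T}_{\psi\vartheta}$ in fact holds for all cocycles. I would therefore either appeal to the stronger form that this proof actually establishes or, to keep the argument self-contained, record the explicit isomorphism $x+v\mapsto x+\psi(v)$ directly. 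With that understood, the three-step composition closes the argument, so Lemma \ref{iso3} is essentially a corollary of Lemmas \ref{equal}, \ref{iso1} and \ref{iso2}.
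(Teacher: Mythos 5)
Your proof is correct and matches the paper's own argument exactly: the paper likewise chains Lemma \ref{iso1} (giving $\mathcal{T}_{\theta}\cong\mathcal{T}_{\phi\theta}$), Lemma \ref{equal} applied to the cohomologous cocycles $\phi\theta$ and $\psi\vartheta$ (giving $\mathcal{T}_{\phi\theta}\cong\mathcal{T}_{\psi\vartheta}$), and Lemma \ref{iso2} (giving $\mathcal{T}_{\psi\vartheta}\cong\mathcal{T}_{\vartheta}$). Your side remark is also well taken: the hypothesis $\theta\in B^{3}\left(\mathcal{T},\mathbb{V}\right)$ in the statement of Lemma \ref{iso2} is evidently a misprint for $Z^{3}\left(\mathcal{T},\mathbb{V}\right)$, since its proof uses only that $\theta$ is a cocycle and the paper itself applies it here to arbitrary cocycles, exactly as you do.
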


\begin{proof}
Let $\phi \in Aut\left( \mathcal{T}\right) $ and $\psi \in GL({\mathbb{V}})$
such that $\left[ \phi \theta \right] =\left[ \psi \vartheta \right] $.
Then by  Lemma \ref{equal} we have $\mathcal{T}_{\phi \theta }\cong \mathcal{T}
_{\psi \vartheta }$, by Lemma \ref{iso1}  we get $ 
\mathcal{T}_{\theta }\cong \mathcal{T}_{\phi \theta } $, and by Lemma \ref{iso2} we obtain $   \mathcal{T}_{\psi
\vartheta }\cong \mathcal{T}_{\vartheta }$.
\end{proof}

Now, using the two actions on $  Z^{3}\left( \mathcal{T},{%
\mathbb{V}}\right) $ that we have established before, we can state the following result.

\begin{lemma}
\label{iso4}Let $\mathcal{T}$ be a Lie triple system and let $\mathbb{V}$ be a vector space. Let $\theta ,\vartheta  $ be two cocycles in $  Z^{3}\left( \mathcal{T},{%
\mathbb{V}}\right) $ such that $\mathrm{Ann}\left( \mathcal{T}_{\theta
}\right) =\mathrm{Ann}\left( \mathcal{T}_{\vartheta }\right) ={\mathbb{V}}$.
In this conditions, $\mathcal{T}_{\theta }\cong \mathcal{T}_{\vartheta }$ if and only if
there exist an automorphism $\phi \in Aut\left( \mathcal{T}\right) $ and a linear isomorphism $\psi
\in GL({\mathbb{V}})$ such that $\left[ \phi \theta \right] =\left[ \psi
\vartheta \right] $.
\end{lemma}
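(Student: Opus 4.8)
The statement is an equivalence, and the forward implication (``if'') is precisely the content of Lemma \ref{iso3}, so the plan is to concentrate on the reverse implication (``only if''). First I would assume $\mathcal{T}_{\theta }\cong \mathcal{T}_{\vartheta }$ and fix an isomorphism $\varphi :\mathcal{T}_{\vartheta }\longrightarrow \mathcal{T}_{\theta }$ of Lie triple systems, deliberately choosing this direction so that the resulting identity comes out in the exact form $[\phi \theta ]=[\psi \vartheta ]$ required by the statement. The crucial structural observation is that any isomorphism carries the annihilator onto the annihilator: from the chain $x\in \mathrm{Ann}(\mathcal{T}_{\vartheta })$ iff $[\varphi (x),\mathcal{T}_{\theta },\mathcal{T}_{\theta }]=\varphi ([x,\mathcal{T}_{\vartheta },\mathcal{T}_{\vartheta }])=0$ one reads off $\varphi (\mathrm{Ann}(\mathcal{T}_{\vartheta }))=\mathrm{Ann}(\mathcal{T}_{\theta })$. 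Invoking the hypothesis $\mathrm{Ann}(\mathcal{T}_{\vartheta })=\mathrm{Ann}(\mathcal{T}_{\theta })={\mathbb{V}}$, this yields $\varphi ({\mathbb{V}})={\mathbb{V}}$, so the restriction $\psi :=\varphi \mid _{{\mathbb{V}}}$ is a well-defined element of $GL({\mathbb{V}})$.

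Next, using the vector-space decomposition $\mathcal{T}_{\theta }=\mathcal{T}\oplus {\mathbb{V}}$, I would write $\varphi (x)=\phi (x)+\alpha (x)$ for $x\in \mathcal{T}$, where $\phi :\mathcal{T}\to \mathcal{T}$ and $\alpha :\mathcal{T}\to {\mathbb{V}}$ are the linear maps obtained by composing $\varphi \mid _{\mathcal{T}}$ with the two projections. Because $\varphi ({\mathbb{V}})\subseteq {\mathbb{V}}$, the map $\varphi $ is block lower-triangular with diagonal blocks $\phi $ and $\psi $; since $\varphi $ and $\psi $ are bijective, $\phi $ must be bijective as well (concretely, if $\phi (x)=0$ then $\varphi (x)\in {\mathbb{V}}=\varphi ({\mathbb{V}})$ forces $x\in \mathcal{T}\cap {\mathbb{V}}=0$). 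It then remains to feed the product $[x,y,z]_{\mathcal{T}_{\vartheta }}=[x,y,z]_{\mathcal{T}}+\vartheta (x,y,z)$ into the homomorphism identity $\varphi ([x,y,z]_{\mathcal{T}_{\vartheta }})=[\varphi (x),\varphi (y),\varphi (z)]_{\mathcal{T}_{\theta }}$ and split the result according to the decomposition $\mathcal{T}\oplus {\mathbb{V}}$, recalling that the ${\mathbb{V}}$-summands are annihilated in both brackets.

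The $\mathcal{T}$-component of this identity reads $\phi ([x,y,z]_{\mathcal{T}})=[\phi (x),\phi (y),\phi (z)]_{\mathcal{T}}$, which says $\phi $ is a homomorphism; combined with the bijectivity established above, this gives $\phi \in \mathrm{Aut}(\mathcal{T})$. The ${\mathbb{V}}$-component reads $\alpha ([x,y,z]_{\mathcal{T}})+\psi (\vartheta (x,y,z))=\theta (\phi (x),\phi (y),\phi (z))$, that is $\delta \alpha +\psi \vartheta =\phi \theta $; since $\delta \alpha \in B^{3}(\mathcal{T},{\mathbb{V}})$ this is exactly $[\phi \theta ]=[\psi \vartheta ]$ in $H^{3}(\mathcal{T},{\mathbb{V}})$, completing the direction. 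The only genuinely delicate point I anticipate is the bookkeeping in this last splitting step: one must keep straight which annihilator-valued terms are absorbed by $\psi $ versus produced by $\theta $ evaluated at $(\phi (x),\phi (y),\phi (z))$, and verify that the leftover term is precisely the coboundary $\delta \alpha $. Everything else is a routine invocation of Lemma \ref{iso3} and the annihilator computation above.
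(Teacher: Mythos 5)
Your proof is correct and takes essentially the same route as the paper's: restrict the isomorphism to ${\mathbb{V}}$ (using that isomorphisms preserve annihilators) to obtain $\psi$, split its action on $\mathcal{T}$ through the decomposition $\mathcal{T}\oplus{\mathbb{V}}$ into $\phi+\alpha$, and separate the homomorphism identity into its $\mathcal{T}$- and ${\mathbb{V}}$-components to read off $\phi\theta=\delta\alpha+\psi\vartheta$, hence $\left[\phi\theta\right]=\left[\psi\vartheta\right]$, with the forward direction delegated to Lemma \ref{iso3} exactly as in the paper. The only (harmless) differences are that you orient the isomorphism as $\mathcal{T}_{\vartheta}\rightarrow\mathcal{T}_{\theta}$ so the conclusion comes out verbatim, whereas the paper works with $\Phi:\mathcal{T}_{\theta}\rightarrow\mathcal{T}_{\vartheta}$ and obtains $\left[\phi\vartheta\right]=\left[\psi\theta\right]$, leaving the symmetric relabeling implicit, and that you explicitly verify $\phi$ is bijective and a homomorphism, a point the paper merely asserts.
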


\begin{proof}
If there exist a map $\phi \in Aut\left( \mathcal{T}
\right) $ and a map $\psi \in GL({\mathbb{V}})$ such that $\left[ \phi
\theta \right] =\left[ \psi \vartheta \right] $ then  $\mathcal{T}_{\theta
}\cong \mathcal{T}_{\vartheta }$ by Lemma \ref{iso3}.
Now let us show the converse. Consider $\theta ,\vartheta \in Z^{3}\left( \mathcal{T},{\mathbb{V}}
\right) $ such that $\mathrm{Ann}\left( \mathcal{T}_{\theta }\right) =%
\mathrm{Ann}\left( \mathcal{T}_{\vartheta }\right) ={\mathbb{V}}$. Suppose
that $\mathcal{T}_{\theta }\cong \mathcal{T}_{\vartheta }$,  there
exists an isomorphism $\Phi :$ $\mathcal{T}_{\theta }\longrightarrow
\mathcal{T}_{\vartheta }$. Since $\Phi \left( {\mathbb{V}}\right) =\Phi
\left( \mathrm{Ann}\left( \mathcal{T}_{\theta }\right) \right) =\mathrm{Ann}
\left( \mathcal{T}_{\vartheta }\right) ={\mathbb{V}}$,   we define $  \psi := \Phi|_{\mathbb V}  \in GL\left( {\mathbb{V}}\right) $. Let $e_{1},e_{2},\ldots,e_{n}$
be a basis of $\mathcal{T}$, and let $\Phi \left( e_{i}\right)
=e_{i}^{\prime }+v_{i}$, where $e_{i}^{\prime }\in \mathcal{T}$ and $v_{i}$ $%
\in V$, for every $i \in \{ 1,\ldots ,n \}$. Then $\Phi $ induces an automorphism $\phi  :\mathcal{T}
\longrightarrow \mathcal{T}$ defined by $\phi  \left( e_{i}\right)
:=e_{i}^{\prime }$,\ and a linear map $\varphi :\mathcal{T}\longrightarrow {%
\mathbb{V}}$ defined by $\varphi \left( e_{i}\right): =v_{i}$. So we can
realize $\Phi $ as a matrix of the form%
\begin{equation*}
\Phi =%
\begin{pmatrix}
\phi   & 0 \\
\varphi & \psi%
\end{pmatrix}  ,
\end{equation*} 
where $\phi  \in Aut\left( \mathcal{T}\right)$, $\psi =\Phi |_{{\mathbb{V}}}\in
GL\left( {\mathbb{V}}\right)$ and  $\varphi \in \mathrm{Hom}\left(
\mathcal{T},{\mathbb{V}}\right) $.
Furthermore, for any $x,y,z\in \mathcal{T}$ we have%
\begin{equation*}
\Phi \left( \left[ x,y,z\right] _{\mathcal{T}_{\theta }}\right) =\Phi \left( %
\left[ x,y,z\right] _{\mathcal{T}}+\theta \left( x,y,z\right) \right) =\phi
 (\left[ x,y,z\right]_{\mathcal{T}} )+\varphi (\left[ x,y,z\right]_{\mathcal{T}} )+\psi (\theta
\left( x,y,z\right) ),
\end{equation*}
and%
\begin{eqnarray*}
\left[ \Phi \left( x\right) ,\Phi \left( y\right) ,\Phi \left( z\right) %
\right] _{\mathcal{T}_{\vartheta }} &=&\left[ \phi  \left( x\right)
+\varphi \left( x\right) ,\phi  \left( y\right) +\varphi \left( y\right)
,\phi  \left( z\right) +\varphi \left( z\right) \right] _{\mathcal{T}
_{\vartheta }} \\
&=&\left[ \phi  \left( x\right) ,\phi  \left( y\right) ,\phi
 \left( z\right) \right] _{\mathcal{T}}+\vartheta \left( \phi  \left(
x\right) ,\phi  \left( y\right) ,\phi  \left( z\right) \right) \\
&=&\phi  (\left[ x,y,z\right] _{\mathcal{T}})+\vartheta \left( \phi
 \left( x\right) ,\phi  \left( y\right) ,\phi  \left( z\right)
\right) .
\end{eqnarray*}
Since $\Phi $ is an isomorphism, it follows that%
\begin{equation}
\vartheta \left( \phi  \left( x\right) ,\phi  \left( y\right) ,\phi
 \left( z\right) \right) =\varphi \left( \left[ x,y,z\right] _{\mathcal{T}
}\right) +\psi \left( \theta \left( x,y,z\right) \right) ,  \label{auto}
\end{equation}
 for all   $x,y,z\in  \mathcal{T}$. Hence we have $\phi  \vartheta =\delta \varphi +\psi \theta $ and
 $\left[ \phi  \vartheta \right] =\left[ \psi \theta \right] $.
\end{proof}

\noindent In case of $\theta =\vartheta $, we obtain from Condition $\left( \ref{auto}\right) $
the following description of $Aut(\mathcal{T}_{\theta })$.

\begin{corollary}
\label{Aut}Let $\mathcal{T}$ be a Lie triple system, let $\mathbb{V}$ be a vector space  and $\theta \in Z%
^{3}\left( \mathcal{T},{\mathbb{V}}\right) $ such that $\mathrm{Rad}\left(
\theta \right) \cap \mathrm{Ann}\left( \mathcal{T}\right) =0$. Then the
automorphism group $Aut(\mathcal{T}_{\theta })$ consists of all linear maps
of the matrix form
\begin{equation*}
\Phi =%
\begin{pmatrix}
\phi  & 0 \\
\varphi & \psi%
\end{pmatrix} , %
\end{equation*}
where $\phi  \in Aut\left( \mathcal{T}\right)$, $\psi  \in
GL\left( {\mathbb{V}}\right)$ and  $\varphi \in \mathrm{Hom}\left(
\mathcal{T},{\mathbb{V}}\right) $,
such that $\theta \left( \phi  \left( x\right) ,\phi  \left( y\right)
,\phi \left( z\right) \right) =\varphi \left( \left[ x,y,z\right] _{%
\mathcal{T}}\right) +\psi \left( \theta \left( x,y,z\right) \right) \ $ for
any $x,y,z\in \mathcal{T}$.
\end{corollary}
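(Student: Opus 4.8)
The plan is to obtain this description as the specialization $\vartheta=\theta$ of Lemma~\ref{iso4}, completed by the reverse inclusion. The first step is to record what the hypothesis buys us: since $\mathrm{Rad}(\theta)\cap\mathrm{Ann}(\mathcal{T})=0$, Lemma~\ref{annihilator of extension} gives $\mathrm{Ann}(\mathcal{T}_{\theta})=\mathbb{V}$. This is the structural input that drives everything, since it makes $\mathbb{V}$ a canonical $\Phi$-invariant subspace for every $\Phi\in\mathrm{Aut}(\mathcal{T}_{\theta})$.

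For the inclusion of $\mathrm{Aut}(\mathcal{T}_{\theta})$ into the displayed set, I would take an arbitrary $\Phi\in\mathrm{Aut}(\mathcal{T}_{\theta})$ and repeat the construction in the proof of Lemma~\ref{iso4} with $\vartheta$ replaced by $\theta$. An automorphism sends the annihilator onto itself, so $\Phi(\mathbb{V})=\Phi(\mathrm{Ann}(\mathcal{T}_{\theta}))=\mathrm{Ann}(\mathcal{T}_{\theta})=\mathbb{V}$; this is exactly what annihilates the off-diagonal block and forces the lower-triangular matrix form. Choosing a basis $e_1,\dots,e_n$ of $\mathcal{T}$ and writing $\Phi(e_i)=e_i'+v_i$ with $e_i'\in\mathcal{T}$ and $v_i\in\mathbb{V}$, I set $\phi(e_i):=e_i'$, $\varphi(e_i):=v_i$ and $\psi:=\Phi|_{\mathbb{V}}$, so that $\phi\in\mathrm{Aut}(\mathcal{T})$ (the map induced on $\mathcal{T}_{\theta}/\mathbb{V}\cong\mathcal{T}$), $\psi\in GL(\mathbb{V})$ and $\varphi\in\mathrm{Hom}(\mathcal{T},\mathbb{V})$. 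Equating $\Phi([x,y,z]_{\mathcal{T}_{\theta}})$ with $[\Phi(x),\Phi(y),\Phi(z)]_{\mathcal{T}_{\theta}}$ then yields condition~(\ref{auto}), which for $\vartheta=\theta$ reads $\theta(\phi(x),\phi(y),\phi(z))=\varphi([x,y,z]_{\mathcal{T}})+\psi(\theta(x,y,z))$, as claimed.

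For the reverse inclusion I would begin with any linear map $\Phi$ of the displayed matrix form whose data $\phi\in\mathrm{Aut}(\mathcal{T})$, $\psi\in GL(\mathbb{V})$, $\varphi\in\mathrm{Hom}(\mathcal{T},\mathbb{V})$ satisfy the compatibility condition, and check directly that $\Phi\in\mathrm{Aut}(\mathcal{T}_{\theta})$. Invertibility is immediate, since a block lower-triangular map with invertible diagonal blocks $\phi,\psi$ is invertible. To see that $\Phi$ is a homomorphism, I would expand $\Phi([x+u,y+v,z+w]_{\mathcal{T}_{\theta}})=\phi([x,y,z]_{\mathcal{T}})+\varphi([x,y,z]_{\mathcal{T}})+\psi(\theta(x,y,z))$ and $[\Phi(x+u),\Phi(y+v),\Phi(z+w)]_{\mathcal{T}_{\theta}}=\phi([x,y,z]_{\mathcal{T}})+\theta(\phi(x),\phi(y),\phi(z))$, where the $\mathcal{T}$-parts already agree because $\phi$ is an automorphism; the two images then coincide precisely when the $\mathbb{V}$-parts match, that is, precisely under the compatibility condition. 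This simply runs the computation of Lemma~\ref{iso4} in reverse.

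The bulk of the work is the two trilinear expansions, which are routine. The one genuinely structural point I would be careful about is the forward claim that $\Phi$ is block lower-triangular: this fails for a generic linear self-map of $\mathcal{T}\boldsymbol{\oplus}\mathbb{V}$ and relies entirely on the hypothesis $\mathrm{Rad}(\theta)\cap\mathrm{Ann}(\mathcal{T})=0$, through the resulting $\Phi$-invariance of $\mathbb{V}=\mathrm{Ann}(\mathcal{T}_{\theta})$. Everything else is bookkeeping already carried out in Lemma~\ref{iso4}.
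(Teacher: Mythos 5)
Your proposal is correct and follows essentially the same route as the paper, which obtains the corollary by specializing the proof of Lemma~\ref{iso4} to $\vartheta=\theta$: the hypothesis $\mathrm{Rad}(\theta)\cap\mathrm{Ann}(\mathcal{T})=0$ yields $\mathrm{Ann}(\mathcal{T}_{\theta})=\mathbb{V}$ via Lemma~\ref{annihilator of extension}, the $\Phi$-invariance of this annihilator forces the block lower-triangular form, and equating the $\mathbb{V}$-components reproduces Condition~(\ref{auto}). Your explicit verification of the reverse inclusion (that any such $\Phi$ is an automorphism) is left implicit in the paper, but it is exactly the intended reversal of the same computation, so there is no substantive difference.
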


\begin{definition} \rm
Let $\mathcal{T}$ be a Lie triple system. If   $x\in \mathcal{T}$ satisfies $%
x\notin \mathcal{T}^{\left( 1\right) } = \left[ \mathcal{T},\mathcal{T},\mathcal{T} \right]$ and $x\in \mathrm{Ann}\left( \mathcal{%
T}\right) $, we call $\mathbb{F}x$ an \emph{annihilator component }of $%
\mathcal{T}$.
\end{definition}

\begin{remark} \rm
 If $\phi :\mathcal{T}_{1}\rightarrow \mathcal{T}_{2}$ is an
isomorphism of Lie triple systems, then $\mathcal{T}_{1}$ has a annihilator
component if and only if so has $\mathcal{T}_{2}$. In fact, consider the vectors $x\in \mathcal{T}_{1}$ and $y \in \mathcal{T}_{2}$ such that $\phi(x) = y$, it follows that  $\mathbb{F}x$ is an annihilator component of $\mathcal{T}_{1}$ if and only if $\mathbb{F}y$ is an annihilator component of $\mathcal{T}_{2}$.
\end{remark}

Let $\mathbb V$ be an $s$-dimensional vector space and let $e_{1},e_{2},\ldots ,e_{s}$ be a fixed basis of ${\mathbb{V}}$. Then $%
\theta \in Z^{3}\left( \mathcal{T},{\mathbb{V}}\right) $ can be uniquely written as $\theta \left( x,y,z\right) = \sum_{i=1}^s
 \theta _{i}\left( x,y,z\right) e_{i},$ where $\theta
_{i}\in Z^{3}\left( \mathcal{T},{\mathbb{F}}\right) $. Moreover,
$\mathrm{Rad}\left( \theta \right) =\cap_{i=1}^s 
$%
$\mathrm{Rad}\left( \theta _{i}\right) $.

\begin{lemma}
\label{cent.comp}Let $\theta \left( x,y,z\right) =\sum_{i=1}^s\theta _{i}\left( x,y,z\right) e_{i}\in Z^{3}\left(
\mathcal{T},{\mathbb{V}}\right) $ and $\mathrm{Rad}\left( \theta \right)
\cap \mathrm{Ann}\left( \mathcal{T}\right) =0$. Then $\mathcal{T}_{\theta }$
has an annihilator component if and only if $\left[ \theta _{1}\right] ,\left[ \theta _{2}
\right] ,\ldots ,\left[ \theta _{s}\right] $ are linearly dependent in $%
H^{3}\left( \mathcal{T},{\mathbb{F}}\right) $.
\end{lemma}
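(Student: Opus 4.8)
The plan is to reduce the statement to a purely linear-algebra condition on $\mathcal{T}_\theta^{(1)}=[\mathcal{T}_\theta,\mathcal{T}_\theta,\mathcal{T}_\theta]$ and then to read the (in)dependence of the classes $[\theta_i]$ off the existence of a suitable separating functional on $\mathcal{T}_\theta$. First I would use that $\mathrm{Rad}(\theta)\cap\mathrm{Ann}(\mathcal{T})=0$, so that Lemma~\ref{annihilator of extension} gives $\mathrm{Ann}(\mathcal{T}_\theta)=\mathbb{V}$. Since an annihilator component of $\mathcal{T}_\theta$ is by definition a line $\mathbb{F}w$ with $w\in\mathrm{Ann}(\mathcal{T}_\theta)$ and $w\notin\mathcal{T}_\theta^{(1)}$, and every element of $\mathrm{Ann}(\mathcal{T}_\theta)$ now lies in $\mathbb{V}$, the whole statement reduces to the equivalence
\[
\mathcal{T}_\theta \text{ has an annihilator component}\iff \mathbb{V}\not\subseteq\mathcal{T}_\theta^{(1)}.
\]
The computational input I would record is the explicit spanning set $\mathcal{T}_\theta^{(1)}=\mathrm{span}\{[x,y,z]_{\mathcal{T}}+\sum_{i=1}^{s}\theta_i(x,y,z)e_i : x,y,z\in\mathcal{T}\}$, which is immediate from the definition of the bracket on $\mathcal{T}_\theta$.

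For the direction from linear dependence to the existence of an annihilator component, suppose $\sum_{i=1}^{s}\alpha_i[\theta_i]=0$ with $(\alpha_i)\neq 0$, so $\sum_i\alpha_i\theta_i=\delta f$ for some $f\in\mathrm{Hom}(\mathcal{T},\mathbb{F})$. I would then define $h\in\mathrm{Hom}(\mathcal{T}_\theta,\mathbb{F})$ by $h(x+v):=\bigl(\sum_i\alpha_i e_i^*\bigr)(v)-f(x)$ for $x\in\mathcal{T}$, $v\in\mathbb{V}$, and verify on the generators above that $h$ kills $\mathcal{T}_\theta^{(1)}$, using $\sum_i\alpha_i\theta_i(x,y,z)=\delta f(x,y,z)=f([x,y,z]_{\mathcal{T}})$. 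Because $(\alpha_i)\neq 0$, some $e_j$ with $\alpha_j\neq 0$ satisfies $h(e_j)=\alpha_j\neq 0$, so $e_j\in\mathbb{V}\setminus\mathcal{T}_\theta^{(1)}$ and $\mathbb{F}e_j$ is an annihilator component.

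Conversely, assume $\mathcal{T}_\theta$ has an annihilator component, so there is $0\neq w\in\mathbb{V}$ with $w\notin\mathcal{T}_\theta^{(1)}$. Since $\mathcal{T}_\theta^{(1)}$ is a subspace missing $w$, I would choose $h\in\mathrm{Hom}(\mathcal{T}_\theta,\mathbb{F})$ vanishing on $\mathcal{T}_\theta^{(1)}$ with $h(w)\neq 0$, and decompose it as $h(x+v)=g(x)+\beta(v)$ with $g\in\mathcal{T}^*$ and $\beta=\sum_i\beta_i e_i^*\in\mathbb{V}^*$. Evaluating $h$ on the generators yields $g([x,y,z]_{\mathcal{T}})+\sum_i\beta_i\theta_i(x,y,z)=0$, i.e.\ $\sum_i\beta_i\theta_i=\delta(-g)\in B^3(\mathcal{T},\mathbb{F})$, whence $\sum_i\beta_i[\theta_i]=0$. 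Finally $\beta\neq 0$, for otherwise $h$ would vanish on all of $\mathbb{V}$, contradicting $h(w)\neq 0$ with $w\in\mathbb{V}$; thus the classes $[\theta_i]$ are linearly dependent.

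I expect the only delicate point to be the bookkeeping that converts the cocycle identity $\sum_i\alpha_i\theta_i=\delta f$ into an honest functional on the extension $\mathcal{T}_\theta$ that annihilates $\mathcal{T}_\theta^{(1)}$, and, in the other direction, the extraction of the coboundary relation from a separating functional together with the argument that its $\mathbb{V}$-part is nonzero. Everything else is the routine translation provided by Lemma~\ref{annihilator of extension} and by the explicit form of the bracket on $\mathcal{T}_\theta$.
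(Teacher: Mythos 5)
Your proof is correct, but it takes a genuinely different route from the paper's. Both begin the same way (by Lemma \ref{annihilator of extension} the hypothesis $\mathrm{Rad}(\theta)\cap\mathrm{Ann}(\mathcal{T})=0$ forces $\mathrm{Ann}(\mathcal{T}_{\theta})=\mathbb{V}$, so the statement reduces to whether $\mathbb{V}\subseteq\mathcal{T}_{\theta}^{(1)}$), but from there you dualize: membership of a vector of $\mathbb{V}$ in $\mathcal{T}_{\theta}^{(1)}$ is tested by linear functionals $h=g+\beta$ on $\mathcal{T}\oplus\mathbb{V}$, and vanishing of $h$ on the generators $[x,y,z]_{\mathcal{T}}+\theta(x,y,z)$ is verbatim the coboundary relation $\sum_{i}\beta_{i}\theta_{i}=\delta(-g)$, i.e.\ $\sum_{i}\beta_{i}[\theta_{i}]=0$, while a nontrivial relation $\sum_{i}\alpha_{i}\theta_{i}=\delta f$ conversely hands you such a functional explicitly. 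The paper works on the primal side: for ``component $\Rightarrow$ dependence'' it extends the component vector $v_{1}$ to a basis of $\mathbb{V}$ and asserts that $v_{1}\notin\mathcal{T}_{\theta}^{(1)}$ forces the $v_{1}$-coefficient $\sum_{i}a_{i1}\theta_{i}$ to vanish identically; for the converse it renumbers so that $[\theta_{s}]=\sum_{i<s}\alpha_{i}[\theta_{i}]$, passes to the cohomologous cocycle $\vartheta$ with $\vartheta_{s}=\sum_{i<s}\alpha_{i}\theta_{i}$ (legitimate by Lemma \ref{equal}) and reads off $\mathcal{T}_{\vartheta}^{(1)}\subseteq\mathcal{T}\oplus\langle w_{1},\ldots,w_{s-1}\rangle$ with $w_{i}=e_{i}+\alpha_{i}e_{s}$. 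Your dual formulation is in fact sharper at the paper's one delicate step: $v_{1}\notin\mathcal{T}_{\theta}^{(1)}$ only forces the $v_{1}$-coefficient to be a \emph{coboundary}, not to vanish. For instance, over $\mathcal{T}_{3,2}$ take $\mathbb{V}=\langle v_{1},v_{2}\rangle$ and $\theta=\theta_{1}v_{1}+\theta_{2}v_{2}$ with $\theta_{1}=\Delta_{1,2,1}=\delta e_{3}^{\ast}$ and $\theta_{2}=\Delta_{1,3,1}$; the radical hypothesis holds (since $\theta_{2}(e_{3},e_{1},e_{1})\neq 0$), and $\mathcal{T}_{\theta}^{(1)}=\langle e_{3}+v_{1},\,v_{2}\rangle$ does not contain $v_{1}$, yet $\theta_{1}\neq 0$ and only $[\theta_{1}]=0$. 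Your separating-functional step extracts exactly the relation $\sum_{i}\beta_{i}[\theta_{i}]=0$, which is all the lemma needs, so your argument quietly repairs this over-strong intermediate claim (the lemma's conclusion is unaffected). What the paper's computation buys in exchange is the explicit cohomologous representative $\vartheta$ and the adapted vectors $w_{1},\ldots,w_{s-1}$, which is the shape in which the lemma is exploited later in the classification, whereas your version is self-contained linear algebra that avoids both the renumbering and the invertible change of basis.
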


\begin{proof}
Suppose $\mathcal{T}_{\theta }$ has an 
annihilator component 
and fix an annihilator component $\mathbb{F}
v_{1}\subseteq {\mathbb{V}}$. Enlarge the set $\left\{ v_{1}\right\} $ to
a set $\left\{ v_{1},v_{2},\ldots,v_{s}\right\} $ to form a basis of ${%
\mathbb{V}}$. Then there exists an invertible matrix   $\big(a_{ij}\big)$ change of basis such
that  $e_{i}=\sum_{j=1}^s%
a_{ij}v_{j}$, for any $i=1,\ldots ,s$. So $$\theta \left( x,y,z\right) =\underset{j=1}{\overset{s}{%
\sum }}\Bigl( \overset{s}{\underset{i=1}{\sum }}a_{ij}\theta _{i}\left(
x,y,z\right) \Bigr) v_{j}.$$ Since $v_{1}\notin \mathcal{T}_{\theta
}^{\left( 1\right) }$, it follows that  $\sum_{i=1}^s a_{i1}\theta
_{i}\left( x,y,z\right) =0$ for all $x,y,z\in \mathcal{T}$. Then $\sum_{i=1}^s a_{i1}\theta _{i}=0$, and therefore $\sum_{i=1}^s a_{i1}\left[ \theta _{i}\right] =0$. Since $\det \big(%
a_{ij}\big)\neq 0$, then  $\left[ \theta _{1}\right] ,\left[ \theta _{2}\right]
,\ldots ,\left[ \theta _{s}\right] $ are linearly dependent in $H%
^{3}\left( \mathcal{T},{\mathbb{F}}\right) $.

Conversely, suppose that $%
\left[ \theta _{1}\right] ,\left[ \theta _{2}\right] ,\ldots ,\left[ \theta
_{s}\right] $ are linearly dependent and $\left[ \theta _{s}\right] =
\sum_{i=1}^{s-1} \alpha _{i}\left[ \theta _{i}\right] $.
Now, define a new cocycle $\vartheta \left( x,y,z\right) =\sum_{i=1}^s%
\vartheta _{i}\left( x,y,z\right)e_{i} $ by setting $\vartheta _{i}=\theta _{i}$
for $i=1,\ldots ,s-1$ and $\vartheta _{s}=\sum_{i=1}^{s-1}
 \alpha _{i}\theta _{i}$. Then $\left[ \theta \right] =\left[ \vartheta %
\right] $ and by Lemma \ref{equal} we get $\mathcal{T}_{\theta }\cong \mathcal{T}_{\vartheta }$.
Further, we have $\vartheta \left( x,y,z\right) =\sum_{i=1}^{s-1}\theta _{i}\left( x,y,z\right) \left( e_{i}+\alpha _{i}e_{s}\right) $%
. For $i=1,\ldots ,s-1$, set $w_{i}:=e_{i}+\alpha _{i}e_{s}$. Then $\vartheta
\left( x,y,z\right) =\sum_{i=1}^{s-1}\theta _{i}\left(
x,y,z\right) w_{i}$. Hence, $\mathcal{T}_{\vartheta }^{\left( 1\right) }=  \left[ \mathcal{T}_{\vartheta },\mathcal{T}_{\vartheta },\mathcal{T}_{\vartheta } \right]  \subset
\mathcal{T}\oplus \left\langle w_{1},w_{2},\ldots ,w_{s-1}\right\rangle $,
so that $\mathcal{T}_{\vartheta }$, and therefore also $\mathcal{T}_{\theta
} $, has a annihilator component.
\end{proof}

\noindent The statement in Lemma \ref{iso4} can be rephrased as follows.

\begin{lemma}
\label{one2one1}
Given two cocycles $\theta(x, y, z) = \sum_{i=1}^s \theta_i(x, y, z)e_i$ and $\vartheta(x, y, z) = \sum_{i=1}^s \vartheta_i(x, y, z)e_i$ in $Z^{3}\left( \mathcal{T},{%
\mathbb{V}}\right) $. Suppose that $\mathcal{T}_{\theta }$ has no annihilator component and $\mathrm{Rad}\left( \theta \right) \cap \mathrm{Ann}\left(
\mathcal{T}\right) =\mathrm{Rad}\left( \vartheta \right) \cap \mathrm{Ann}
\left( \mathcal{T}\right) =0$. In these conditions,  the Lie triple systems $\mathcal{T}_{\theta }$ and $\mathcal{T}
_{\vartheta }$ are isomorphic if and only if there exists an automorphism $\phi\in Aut\left( \mathcal{T}\right)$ such that the set $\left\{\left[\phi\vartheta_i\right]: i = 1, \ldots, s\right\}$ spans the same subspace of $H^{3}\left( \mathcal{T},\mathbb{F}\right) $ as the set $\left\{\left[\theta_i\right]: i = 1, \ldots, s\right\}$.

\end{lemma}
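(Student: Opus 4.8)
The plan is to deduce this directly from Lemma \ref{iso4}, by translating the condition ``$[\phi\theta]=[\psi\vartheta]$ for some $\phi\in\mathrm{Aut}(\mathcal{T})$ and $\psi\in GL(\mathbb{V})$'' into the stated spanning condition on cohomology classes. First I would record that the hypotheses $\mathrm{Rad}(\theta)\cap\mathrm{Ann}(\mathcal{T})=\mathrm{Rad}(\vartheta)\cap\mathrm{Ann}(\mathcal{T})=0$ force $\mathrm{Ann}(\mathcal{T}_{\theta})=\mathrm{Ann}(\mathcal{T}_{\vartheta})=\mathbb{V}$ by Lemma \ref{annihilator of extension}, so that Lemma \ref{iso4} is indeed applicable in both directions.

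Next I would fix the bookkeeping for the two group actions in terms of the components $\theta_i,\vartheta_i\in Z^{3}(\mathcal{T},\mathbb{F})$. Since the chosen basis $e_1,\ldots,e_s$ of $\mathbb{V}$ splits both $Z^{3}(\mathcal{T},\mathbb{V})$ and $B^{3}(\mathcal{T},\mathbb{V})$ into $s$ copies of their scalar counterparts, an equality $[\xi]=[\eta]$ in $H^{3}(\mathcal{T},\mathbb{V})$ is equivalent to the componentwise equalities $[\xi_k]=[\eta_k]$ in $H^{3}(\mathcal{T},\mathbb{F})$. Writing $\psi(e_i)=\sum_k a_{ki}e_k$, the $k$-th component of $\psi\theta$ has class $\sum_i a_{ki}[\theta_i]$, so applying $\psi\in GL(\mathbb{V})$ replaces $\{[\theta_i]\}$ by an invertible linear combination of itself and hence leaves $\mathrm{span}\{[\theta_i]\}$ unchanged; likewise $\theta\mapsto\phi\theta$ descends to a linear automorphism of $H^{3}(\mathcal{T},\mathbb{F})$ with inverse $\theta\mapsto\phi^{-1}\theta$, so it preserves spans and satisfies $\phi^{-1}(\phi\theta_i)=\theta_i$.

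For the forward direction I would invoke Lemma \ref{iso4} to obtain $\phi$ and $\psi$ with $[\phi\theta]=[\psi\vartheta]$; reading this componentwise gives $\mathrm{span}\{[\phi\theta_i]\}=\mathrm{span}\{[\vartheta_i]\}$, and applying the automorphism of $H^{3}(\mathcal{T},\mathbb{F})$ induced by $\phi^{-1}$ turns this into $\mathrm{span}\{[\theta_i]\}=\mathrm{span}\{[\phi^{-1}\vartheta_i]\}$, which is the desired conclusion with the automorphism $\phi^{-1}$.

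For the converse I would assume $\mathrm{span}\{[\phi\vartheta_i]\}=\mathrm{span}\{[\theta_i]\}$ for some $\phi$ and build a suitable $\psi$. The main obstacle is precisely here: to realize the change of spanning families as an element of $GL(\mathbb{V})$ I need both families to be bases, i.e.\ linearly independent. This is where the hypothesis that $\mathcal{T}_{\theta}$ has no annihilator component enters: by Lemma \ref{cent.comp} it makes $[\theta_1],\ldots,[\theta_s]$ linearly independent, so the common span has dimension $s$, and then the $s$ generators $[\phi\vartheta_1],\ldots,[\phi\vartheta_s]$ of that same span are automatically independent as well. Expanding each $[\phi\vartheta_k]=\sum_i b_{ki}[\theta_i]$ with $(b_{ki})$ invertible, I would define $\psi\in GL(\mathbb{V})$ by $\psi(e_i):=\sum_k b_{ki}e_k$, so that $[\psi\theta]=[\phi\vartheta]$ holds componentwise, hence in $H^{3}(\mathcal{T},\mathbb{V})$. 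Lemma \ref{iso4}, applied with the roles of $\theta$ and $\vartheta$ interchanged, then yields $\mathcal{T}_{\theta}\cong\mathcal{T}_{\vartheta}$, completing the proof.
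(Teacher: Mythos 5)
Your proposal is correct and follows essentially the same route as the paper: both directions reduce to Lemma \ref{iso4}, with the componentwise translation between $[\phi\theta]=[\psi\vartheta]$ in $H^{3}(\mathcal{T},\mathbb{V})$ and equality of spans in $H^{3}(\mathcal{T},\mathbb{F})$ via the matrix of $\psi$. If anything, you are more explicit than the paper at the one delicate point — invoking Lemma \ref{cent.comp} to show that the absence of an annihilator component makes $[\theta_1],\ldots,[\theta_s]$ linearly independent, which is what guarantees the change-of-span matrix is invertible and hence defines an element of $GL(\mathbb{V})$ — a step the paper's converse asserts without comment.
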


\begin{proof}
Suppose first that $\mathcal{T}_{\theta }\cong \mathcal{T}_{\vartheta }$.
Then, by Lemma \ref{iso4}, there exist a map $\phi \in Aut\left( \mathcal{T}
\right) $ and a map $\psi \in GL\left( {\mathbb{V}}\right) $ such that $%
\left[ \phi \vartheta \right] =\left[ \psi \theta \right] $. Let $\psi
\left( e_{i}\right) = \sum_{j=1}^s a_{ij}e_{j}$. Then
\begin{equation}
\left( \phi \vartheta -\psi \theta \right) \left( x,y,z\right) =\underset{j=1%
}{\overset{s}{\sum }}\Bigl( \phi \vartheta _{j}-\underset{i=1}{\overset{s}{%
\sum }}a_{ij}\theta _{i}\Bigr) \left( x,y,z\right) e_{j}.  \label{eq2}
\end{equation}
Since $\phi \vartheta _{j}-\sum_{i=1}^s a_{ij}\theta
_{i}\in B^{3}\left( \mathcal{T},\mathbb{F}\right) $, then  $\left[
\phi \vartheta _{j}\right] =\sum_{i=1}^s a_{ij}\left[
\theta _{i}\right] $. Hence, $\left\{ \left[ \phi \vartheta _{i}\right]
:i=1,\ldots ,s\right\} $ spans the same subspace of $H^{3}\left(
\mathcal{T},\mathbb{F}\right) $ as the set $\left\{ \left[ \theta _{i}
\right] :i=1,\ldots ,s\right\} $.

Conversely, suppose $\left\{\left[\phi\vartheta_i\right]: i = 1, \ldots, s\right\}$ spans the same vector space as  $\left\{\left[\theta_i\right]: i = 1, \ldots, s\right\}$. Then there exists an
invertible matrix $\big(a_{ij}\big)$ such that $\left[ \phi \vartheta _{j}
\right] =\sum_{i=1}^s a_{ij}\left[ \theta _{i}\right] $%
. Define a linear map $\psi :{\mathbb{V}}\longrightarrow {\mathbb{V}}$ by $%
\psi \left( e_{i}\right) :=\sum_{j=1}^s a_{ij}e_{j}$.
Then $\psi \theta \left( x,y,z\right) =\sum_{i=1}^s%
\sum_{j=1}^s a_{ij}\theta _{i}\left( x,y,z\right) e_{j}$
and we have Equation \eqref{eq2}. Hence, $\left[ \phi \vartheta \right] =\left[ \psi
\theta \right] $,  and  therefore  $\mathcal{T}_{\theta }\cong \mathcal{T}_{\vartheta }$
by Lemma \ref{iso4}.
\end{proof}

Let $G_{s}\left( H^{3}\left( \mathcal{T},\mathbb{F}\right) \right)
$ be the Grassmannian of subspaces of dimension $s$ in $H^{3}\left( \mathcal{T},\mathbb{F}\right) $. The automorphism group $%
Aut\left( \mathcal{T}\right) $ acts on $G_{s}\left( H^{3}\left(
\mathcal{T},\mathbb{F}\right) \right) \ $as follows: if $\phi \in Aut\left(
\mathcal{T}\right) $ and vector space  $\mathbb{W}=\left\langle \left[ \theta _{1}\right] ,%
\left[ \theta _{2}\right] ,\ldots,\left[ \theta _{s}\right] \right\rangle \in
G_{s}\left( H^{3}\left( \mathcal{T},\mathbb{F}\right) \right) $,
then $\phi \mathbb{W}:=\left\langle \left[ \phi \theta _{1}\right] ,\left[
\phi \theta _{2}\right] ,\ldots,\left[ \phi \theta _{s}\right] \right\rangle
\in G_{s}\left( H^{3}\left( \mathcal{T},\mathbb{F}\right) \right) $%
. We denote the orbit of $\mathbb{W}\in G_{s}\left( H^{3}\left(
\mathcal{T},\mathbb{F}\right) \right) $ under the action of $Aut\left(
\mathcal{T}\right) $ by $\mathcal{O}\left( \mathbb{W}\right) $.

\begin{lemma}
Given two vector spaces ${\mathbb W}_1=\langle\left[\theta_1\right], \left[\theta_2\right], \ldots, \left[\theta_s \right]\rangle$ and ${\mathbb W}_2=\langle\left[\vartheta_1\right], \left[\vartheta_2\right], \ldots, \left[\vartheta_s \right]\rangle$ in $G_{s}\left( H
^{3}\left( \mathcal{T},\mathbb{F}\right) \right) $. If ${\mathbb W}_1 = {\mathbb W}_2$, then $\cap_{i=1}^s\mathrm{Rad}\left(
\theta _{i}\right) \cap \mathrm{Ann}\left( \mathcal{T}\right) =\cap_{i=1}^s\mathrm{Rad}\left( \vartheta _{i}\right) \cap \mathrm{Ann}
\left( \mathcal{T}\right) $.
\end{lemma}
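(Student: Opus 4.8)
The plan is to show that the set $\bigcap_{i=1}^{s}\mathrm{Rad}\left(\theta_i\right)\cap\mathrm{Ann}\left(\mathcal{T}\right)$ depends only on the subspace $\mathbb{W}_1=\langle\left[\theta_1\right],\ldots,\left[\theta_s\right]\rangle$, and not on the particular choice of generating cocycles. First I would record, as observed just before the statement, that for $\theta=\sum_{i=1}^{s}\theta_i e_i$ one has $\mathrm{Rad}\left(\theta\right)=\bigcap_{i=1}^{s}\mathrm{Rad}\left(\theta_i\right)$, so the quantity under consideration is exactly $\mathrm{Rad}\left(\theta\right)\cap\mathrm{Ann}\left(\mathcal{T}\right)$, and likewise $\bigcap_{i=1}^{s}\mathrm{Rad}\left(\vartheta_i\right)\cap\mathrm{Ann}\left(\mathcal{T}\right)=\mathrm{Rad}\left(\vartheta\right)\cap\mathrm{Ann}\left(\mathcal{T}\right)$ for $\vartheta=\sum_{i=1}^{s}\vartheta_i e_i$.

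The crucial observation is that for $x\in\mathrm{Ann}\left(\mathcal{T}\right)$ every coboundary vanishes with $x$ in the first slot: if $\delta f\in B^{3}\left(\mathcal{T},\mathbb{F}\right)$, then $\delta f(x,y,z)=f\left(\left[x,y,z\right]_{\mathcal{T}}\right)=f(0)=0$ for all $y,z\in\mathcal{T}$, since $\left[x,\mathcal{T},\mathcal{T}\right]_{\mathcal{T}}=0$. Consequently, for such $x$ the condition $\eta(x,\mathcal{T},\mathcal{T})=0$ depends only on the cohomology class $\left[\eta\right]\in H^{3}\left(\mathcal{T},\mathbb{F}\right)$: if $\left[\eta\right]=\left[\eta'\right]$ then $\eta-\eta'=\delta f$ is a coboundary, whence $\left(\eta-\eta'\right)(x,\mathcal{T},\mathcal{T})=0$.

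I would then fix $x\in\mathrm{Ann}\left(\mathcal{T}\right)$ and set $R_x:=\left\{\left[\eta\right]\in H^{3}\left(\mathcal{T},\mathbb{F}\right):\eta(x,\mathcal{T},\mathcal{T})=0\right\}$, which is well defined by the paragraph above and is a linear subspace of $H^{3}\left(\mathcal{T},\mathbb{F}\right)$, being the kernel of the induced linear map sending $\left[\eta\right]$ to the bilinear form $\eta(x,\text{\textendash},\text{\textendash})$. Now $x\in\mathrm{Rad}\left(\theta\right)\cap\mathrm{Ann}\left(\mathcal{T}\right)$ if and only if $x\in\mathrm{Ann}\left(\mathcal{T}\right)$ and $\left[\theta_i\right]\in R_x$ for every $i$, and because $R_x$ is a subspace this is equivalent to $\mathbb{W}_1\subseteq R_x$. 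This yields the intrinsic description
\[
\mathrm{Rad}\left(\theta\right)\cap\mathrm{Ann}\left(\mathcal{T}\right)=\left\{x\in\mathrm{Ann}\left(\mathcal{T}\right):\mathbb{W}_1\subseteq R_x\right\},
\]
together with the identical formula involving $\mathbb{W}_2$ for $\vartheta$. Since $\mathbb{W}_1=\mathbb{W}_2$ by hypothesis, the two sets coincide, which is the desired equality.

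There is no genuine obstacle in this argument; the single point requiring care is the verification that the radical condition descends to cohomology, which rests entirely on the vanishing of coboundaries on annihilator elements. Once $R_x$ is recognised as a subspace, the linearity automatically converts the statement \emph{the chosen generators lie in $R_x$} into \emph{the whole span lies in $R_x$}, and this is precisely where the equality $\mathbb{W}_1=\mathbb{W}_2$ is used.
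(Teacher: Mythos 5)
Your proof is correct, but it is organized differently from the paper's. The paper argues via a transition matrix: from $\mathbb{W}_1=\mathbb{W}_2$ it extracts an invertible matrix $\big(a_{ij}\big)$ with $\left[\theta_i\right]=\sum_{j=1}^{s}a_{ij}\left[\vartheta_j\right]$, writes $\theta_i=\sum_{j=1}^{s}a_{ij}\vartheta_j+\delta f_i$, and then checks pointwise that the joint vanishing conditions $\theta_1(x,y,z)=\cdots=\theta_s(x,y,z)=\left[x,y,z\right]_{\mathcal{T}}=0$ and $\vartheta_1(x,y,z)=\cdots=\vartheta_s(x,y,z)=\left[x,y,z\right]_{\mathcal{T}}=0$ are equivalent, the invertibility of $\big(a_{ij}\big)$ being what gives the converse direction. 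You bypass the matrix entirely: starting from the same key fact that both proofs rest on --- coboundaries vanish when an annihilator element sits in the first slot, i.e.\ $\delta f(x,\mathcal{T},\mathcal{T})=0$ for $x\in\mathrm{Ann}\left(\mathcal{T}\right)$ --- you observe that for fixed $x\in\mathrm{Ann}\left(\mathcal{T}\right)$ the assignment sending $\left[\eta\right]$ to the bilinear form $\eta(x,\cdot\,,\cdot)$ descends to a well-defined linear map on $H^{3}\left(\mathcal{T},\mathbb{F}\right)$, and you characterize the set in question intrinsically as $\left\{x\in\mathrm{Ann}\left(\mathcal{T}\right):\mathbb{W}\subseteq R_x\right\}$ with $R_x$ the kernel of that map. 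This packaging buys a little more than the paper's computation: it exhibits $\cap_{i=1}^{s}\mathrm{Rad}\left(\theta_i\right)\cap\mathrm{Ann}\left(\mathcal{T}\right)$ as an invariant of the subspace $\mathbb{W}$ alone, and it requires neither an invertible transition matrix nor that the two generating families have the same cardinality --- any two spanning sets of the same subspace of $H^{3}\left(\mathcal{T},\mathbb{F}\right)$ would do --- whereas the paper's argument is tied to both families being bases of the same $s$-dimensional space. Your verification is complete as written; the only point needing care, the descent of the radical condition to cohomology classes, is exactly the one you single out and justify.
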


\begin{proof}
Assume  $\mathbb{W}_{1}=\mathbb{W}_{2}$. Then there exists an invertible matrix $%
\big(a_{ij}\big)$ such that $\left[ \theta _{i}\right] =\sum_{j=1}^s a_{ij}\left[ \vartheta _{j}\right] $. Therefore, we have $\theta
_{i}=\sum_{j=1}^s a_{ij}\vartheta _{j}+\delta f_{i}$
for some $f_{i}\in \mathrm{Hom}\left( \mathcal{T},\mathbb{F}\right) $. Then, $$%
\theta _{i}\left( x,y,z\right) =\underset{j=1}{\overset{s%
}{\sum }}a_{ij}\vartheta _{j}\left( x,y,z\right)
+f_{i}\left( \left[ x,y,z\right] _{\mathcal{T}}\right) .$$%
 Hence, $\theta _{1}\left( x,y,z\right) =\cdots =\theta
_{s}\left( x,y,z\right) =\left[ x,y,z%
\right]_{\mathcal{T}} =0$ if and only if $\vartheta _{1}\left( x,y,z\right)
=\cdots =\vartheta _{s}\left( x,y,z\right) =\left[ x,%
y,z\right]_{\mathcal{T}} =0$. Therefore $\cap_{i=1}^s\mathrm{Rad}\left(
\theta _{i}\right) \cap \mathrm{Ann}\left( \mathcal{T}\right) =\cap_{i=1}^s\mathrm{Rad}\left( \vartheta _{i}\right) \cap \mathrm{Ann}
\left( \mathcal{T}\right) $.
\end{proof}

From this result, we can define the set%
\begin{equation*}
\mathcal{T}_{s}\left( \mathcal{T}\right) :=\left\{ \mathbb{W}=\left\langle %
\left[ \theta _{1}\right] ,\left[ \theta _{2}\right] ,\ldots,\left[ \theta _{s}
\right] \right\rangle \in G_{s}\left( H^{3}\left( \mathcal{T},%
\mathbb{F}\right) \right) :\underset{i=1}{\overset{s}{\cap }}\mathrm{Rad}
\left( \theta _{i}\right) \cap \mathrm{Ann}\left( \mathcal{T}\right)
=0\right\} .
\end{equation*}

\begin{lemma}
The set $\mathcal{T}_{s}\left( \mathcal{T}\right) $ is stable under the
action of $Aut\left( \mathcal{T}\right) $.
\end{lemma}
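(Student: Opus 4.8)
The plan is to fix an automorphism $\phi\in\mathrm{Aut}(\mathcal{T})$ and a subspace $\mathbb{W}=\langle[\theta_{1}],\ldots,[\theta_{s}]\rangle\in\mathcal{T}_{s}(\mathcal{T})$, and to show that $\phi\mathbb{W}=\langle[\phi\theta_{1}],\ldots,[\phi\theta_{s}]\rangle$ again lies in $\mathcal{T}_{s}(\mathcal{T})$; by the very definition of this set, that amounts to verifying the single identity $\bigcap_{i=1}^{s}\mathrm{Rad}(\phi\theta_{i})\cap\mathrm{Ann}(\mathcal{T})=0$. Since the preceding lemma shows that this intersection depends only on the subspace $\phi\mathbb{W}$ and not on the chosen generators, it is enough to argue with the particular representatives $\phi\theta_{1},\ldots,\phi\theta_{s}$.

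The key input is the description of radicals under the action of $\mathrm{Aut}(\mathcal{T})$ recorded in the proof of Lemma \ref{iso1}: for each $i$ and each $x\in\mathcal{T}$ one has $\phi\theta_{i}(x,\mathcal{T},\mathcal{T})=\theta_{i}(\phi(x),\mathcal{T},\mathcal{T})$, so that $x\in\mathrm{Rad}(\phi\theta_{i})$ if and only if $\phi(x)\in\mathrm{Rad}(\theta_{i})$, that is, $\mathrm{Rad}(\phi\theta_{i})=\phi^{-1}(\mathrm{Rad}(\theta_{i}))$. Because $\phi^{-1}$ is a bijective linear map it commutes with finite intersections, whence $\bigcap_{i=1}^{s}\mathrm{Rad}(\phi\theta_{i})=\phi^{-1}\bigl(\bigcap_{i=1}^{s}\mathrm{Rad}(\theta_{i})\bigr)$.

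It then remains to bring in the annihilator. Since $\phi$ is an isomorphism of Lie triple systems it maps $\mathrm{Ann}(\mathcal{T})$ onto itself, so $\phi^{-1}(\mathrm{Ann}(\mathcal{T}))=\mathrm{Ann}(\mathcal{T})$; combining this with the previous display and the injectivity of $\phi^{-1}$ yields
\begin{equation*}
\bigcap_{i=1}^{s}\mathrm{Rad}(\phi\theta_{i})\cap\mathrm{Ann}(\mathcal{T})=\phi^{-1}\Bigl(\bigcap_{i=1}^{s}\mathrm{Rad}(\theta_{i})\cap\mathrm{Ann}(\mathcal{T})\Bigr)=\phi^{-1}(0)=0,
\end{equation*}
where the middle equality uses the hypothesis $\mathbb{W}\in\mathcal{T}_{s}(\mathcal{T})$. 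This gives $\phi\mathbb{W}\in\mathcal{T}_{s}(\mathcal{T})$, as desired. The computation is essentially routine once Lemma \ref{iso1} is invoked; the only genuine point I would check with care is that an automorphism preserves the annihilator, so that $\phi^{-1}$ may legitimately be pulled outside both the intersection of the radicals and the intersection with $\mathrm{Ann}(\mathcal{T})$ at once. That observation, together with the representative-independence of the defining condition, is where I would concentrate the argument.
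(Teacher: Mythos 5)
Your proof is correct and follows essentially the same route as the paper's: the paper likewise observes that $x\in\bigcap_{i=1}^{s}\mathrm{Rad}(\phi\theta_{i})\cap\mathrm{Ann}(\mathcal{T})$ if and only if $\phi(x)\in\bigcap_{i=1}^{s}\mathrm{Rad}(\theta_{i})\cap\mathrm{Ann}(\mathcal{T})$, which is exactly your computation $\mathrm{Rad}(\phi\theta_{i})=\phi^{-1}(\mathrm{Rad}(\theta_{i}))$ combined with $\phi^{-1}(\mathrm{Ann}(\mathcal{T}))=\mathrm{Ann}(\mathcal{T})$. You are merely more explicit than the paper about the two points it leaves tacit, namely that automorphisms preserve the annihilator and that the defining condition is independent of the chosen representatives.
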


\begin{proof}
Let $\phi \in Aut\left( \mathcal{T}\right) $ and ${\mathbb{W}}=\left\langle %
\left[ \theta _{1}\right] ,\left[ \theta _{2}\right] ,\ldots,\left[ \theta _{s}
\right] \right\rangle \in G_{s}\left( H^{3}\left( \mathcal{T},%
\mathbb{F}\right) \right) $. Then, $x\in \cap_{i=1}^s
\mathrm{Rad}\left( \phi \theta _{i}\right) \cap \mathrm{Ann}\left( \mathcal{T}
\right) $ if and only if $\phi\left(x\right)\in \cap_{i=1}^s \mathrm{Rad}\left( \theta
_{i}\right) \cap \mathrm{Ann}\left( \mathcal{T}\right) $. So $\cap_{i=1}^s \mathrm{Rad}\left( \phi \theta _{i}\right) \cap \mathrm{Ann}\left( \mathcal{T}\right) =0$ if and only if $\cap_{i=1}^s %
\mathrm{Rad}\left( \theta _{i}\right) \cap \mathrm{Ann}\left( \mathcal{T}
\right) =0$. Consequently, $\phi {\mathbb{W}}\in \mathcal{T}_{s}\left( \mathcal{T}
\right) $ if and only if ${\mathbb{W}}\in \mathcal{T}_{s}\left( \mathcal{T}\right) $.
\end{proof}

Let $\mathcal{T}$ be a Lie triple system, let $\mathbb V$ be an $s$-dimensional vector space and let $\left\{e_1, e_2, \ldots, e_s\right\}$ be a basis of $\mathbb V$. Consider the set $\mathcal{E}
\left( \mathcal{T},{\mathbb{V}}\right) $ of all Lie triple systems without annihilator components, which are $s$-dimensional   annihilator
extensions of $\mathcal{T}$ by ${\mathbb{V}}$ and have $s$-dimensional
annihilator. Then%
\begin{equation*}
\mathcal{E}\left( \mathcal{T},{\mathbb{V}}\right) =\left\{ \mathcal{T}
_{\theta }:\theta \left( x,y,z\right) =\underset{i=1}{\overset{s}{\sum }}
\theta _{i}\left( x,y,z\right) e_{i}\mbox{
and }\left\langle \left[ \theta _{1}\right] ,\left[ \theta _{2}\right] ,\ldots,%
\left[ \theta _{s}\right] \right\rangle \in \mathcal{T}_{s}\left( {\mathcal{T}
}\right) \right\}  .
\end{equation*}
Given $\mathcal{T}_{\theta }\in \mathcal{E}\left( \mathcal{T},{\mathbb{V}}
\right) $, we denote by $\left[ \mathcal{T}_{\theta }\right] $  the isomorphism
class of $\mathcal{T}_{\theta }$.
Using this new notation, we can rephrase Lemma \ref{one2one1} as follows.
\begin{lemma}
Let $\mathcal{T}_{\theta },\mathcal{T}_{\vartheta }\in \mathcal{E}\left(
\mathcal{T},{\mathbb{V}}\right) $. Suppose that $\theta \left( x,y,z\right) =\sum_{i=1}^s \theta _{i}\left( x,y,z\right) e_{i}$ and $%
\vartheta \left( x,y,z\right) =\sum_{i=1}^s \vartheta
_{i}\left( x,y,z\right) e_{i}$. In this conditions, $\left[ \mathcal{T}_{\theta }\right] =%
\left[ \mathcal{T}_{\vartheta }\right] $ if and only if \emph{$\mathcal{O}$}$%
\left\langle \left[ \theta _{1}\right] ,\left[ \theta _{2}\right] ,\ldots,\left[
\theta _{s}\right] \right\rangle =$\emph{$\mathcal{O}$}$\left\langle \left[
\vartheta _{1}\right] ,\left[ \vartheta _{2}\right] ,\ldots,\left[ \vartheta
_{s}\right] \right\rangle $.
\end{lemma}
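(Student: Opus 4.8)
The plan is to recognize this statement as a direct reformulation of Lemma \ref{one2one1} in the language of orbits on the Grassmannian, so that the work consists mainly in verifying that the hypotheses of Lemma \ref{one2one1} are automatically in force for members of $\mathcal{E}\left(\mathcal{T},{\mathbb{V}}\right)$ and in translating the spanning condition into orbit equality. First I would unwind what the assumptions $\mathcal{T}_{\theta},\mathcal{T}_{\vartheta}\in\mathcal{E}\left(\mathcal{T},{\mathbb{V}}\right)$ provide. By the very definition of $\mathcal{E}\left(\mathcal{T},{\mathbb{V}}\right)$, neither $\mathcal{T}_{\theta}$ nor $\mathcal{T}_{\vartheta}$ has an annihilator component, so by Lemma \ref{cent.comp} the classes $\left[\theta_{1}\right],\ldots,\left[\theta_{s}\right]$ are linearly independent in $H^{3}\left(\mathcal{T},{\mathbb{F}}\right)$, and likewise $\left[\vartheta_{1}\right],\ldots,\left[\vartheta_{s}\right]$. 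Hence $\mathbb{W}_{\theta}:=\left\langle\left[\theta_{1}\right],\ldots,\left[\theta_{s}\right]\right\rangle$ and $\mathbb{W}_{\vartheta}:=\left\langle\left[\vartheta_{1}\right],\ldots,\left[\vartheta_{s}\right]\right\rangle$ are genuine points of $G_{s}\left(H^{3}\left(\mathcal{T},{\mathbb{F}}\right)\right)$ of dimension exactly $s$. Moreover, membership in $\mathcal{E}\left(\mathcal{T},{\mathbb{V}}\right)$ forces $\mathbb{W}_{\theta}\in\mathcal{T}_{s}\left(\mathcal{T}\right)$, that is $\cap_{i=1}^{s}\mathrm{Rad}\left(\theta_{i}\right)\cap\mathrm{Ann}\left(\mathcal{T}\right)=0$; since $\mathrm{Rad}\left(\theta\right)=\cap_{i=1}^{s}\mathrm{Rad}\left(\theta_{i}\right)$ this is precisely $\mathrm{Rad}\left(\theta\right)\cap\mathrm{Ann}\left(\mathcal{T}\right)=0$, and identically for $\vartheta$.

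Next I would simply invoke Lemma \ref{one2one1}, whose hypotheses are now all satisfied: $\left[\mathcal{T}_{\theta}\right]=\left[\mathcal{T}_{\vartheta}\right]$, i.e.\ $\mathcal{T}_{\theta}\cong\mathcal{T}_{\vartheta}$, holds if and only if there exists $\phi\in\mathrm{Aut}\left(\mathcal{T}\right)$ such that $\left\{\left[\phi\vartheta_{i}\right]:i=1,\ldots,s\right\}$ spans the same subspace of $H^{3}\left(\mathcal{T},{\mathbb{F}}\right)$ as $\left\{\left[\theta_{i}\right]:i=1,\ldots,s\right\}$. By the definition of the action of $\mathrm{Aut}\left(\mathcal{T}\right)$ on the Grassmannian, the subspace spanned by $\left\{\left[\phi\vartheta_{i}\right]\right\}$ is exactly $\phi\mathbb{W}_{\vartheta}$; since both $\phi\mathbb{W}_{\vartheta}$ and $\mathbb{W}_{\theta}$ are $s$-dimensional, the spanning condition upgrades to the honest equality $\phi\mathbb{W}_{\vartheta}=\mathbb{W}_{\theta}$. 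The existence of such a $\phi$ means precisely that $\mathbb{W}_{\theta}$ lies in the $\mathrm{Aut}\left(\mathcal{T}\right)$-orbit of $\mathbb{W}_{\vartheta}$, which is equivalent to the orbit equality $\mathcal{O}\left(\mathbb{W}_{\theta}\right)=\mathcal{O}\left(\mathbb{W}_{\vartheta}\right)$, establishing the desired equivalence.

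There is no deep obstacle here, as the argument is a translation between two equivalent descriptions of the same isomorphism condition. The only point that genuinely requires care is the passage from \emph{spans the same subspace} to \emph{equal as points of} $G_{s}\left(H^{3}\left(\mathcal{T},{\mathbb{F}}\right)\right)$: this relies on knowing beforehand that both families of classes are linearly independent, which is exactly where the no-annihilator-component hypothesis built into $\mathcal{E}\left(\mathcal{T},{\mathbb{V}}\right)$, together with Lemma \ref{cent.comp}, is used. Everything else is bookkeeping that matches the definitions of the two group actions and of $\mathcal{T}_{s}\left(\mathcal{T}\right)$.
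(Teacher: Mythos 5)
Your proposal is correct and takes essentially the same route as the paper: both arguments consist of invoking Lemma \ref{one2one1} and then translating the existence of an automorphism $\phi \in \mathrm{Aut}(\mathcal{T})$ with $\phi\mathbb{W}_{\vartheta}=\mathbb{W}_{\theta}$ into the orbit equality $\mathcal{O}(\mathbb{W}_{\theta})=\mathcal{O}(\mathbb{W}_{\vartheta})$. Your additional verifications (that membership in $\mathcal{E}(\mathcal{T},\mathbb{V})$ supplies the hypotheses of Lemma \ref{one2one1}, and that linear independence of the classes, via Lemma \ref{cent.comp} or directly from the definition of $\mathcal{T}_{s}(\mathcal{T})$, upgrades the spanning condition to equality of points in the Grassmannian) are details the paper leaves implicit, but do not change the approach.
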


\begin{proof}
Consider two Lie triple systems $\mathcal{T}_{\theta },\mathcal{T}_{\vartheta }\in \mathcal{E}\left(
\mathcal{T},{\mathbb{V}}\right) $. By Lemma \ref{one2one1}, $\mathcal{T%
}_{\theta }\cong \mathcal{T}_{\vartheta }$ if and only if there exists an automorphism $\phi \in
Aut\left( \mathcal{T}\right) $ such that $\left\langle \left[ \phi \vartheta
_{1}\right] ,\left[ \phi \vartheta _{2}\right] ,\ldots,\left[ \phi \vartheta
_{s}\right] \right\rangle =\left\langle \left[ \theta _{1}\right] ,\left[
\theta _{2}\right] ,\ldots,\left[ \theta _{s}\right] \right\rangle $. Hence, $%
\left[ \mathcal{T}_{\theta }\right] =\left[ \mathcal{T}_{\vartheta }\right] $
if and only if $\mathcal{O}\left\langle \left[ \theta _{1}\right] ,\left[ \theta _{2}
\right] ,\ldots,\left[ \theta _{s}\right] \right\rangle =\mathcal{O}
\left\langle \left[ \vartheta _{1}\right] ,\left[ \vartheta _{2}\right] ,\ldots,%
\left[ \vartheta _{s}\right] \right\rangle $.
\end{proof}

Thus, each orbit of $Aut\left( \mathcal{T}\right) $ on $\mathcal{T}
_{s}\left( \mathcal{T}\right) $\ corresponds uniquely to an isomorphism
class of $\mathcal{E}\left( \mathcal{T},{\mathbb{V}}\right) $, and vice-versa. This correspondence is defined by%
\begin{equation*}
\mathcal{O}\left\langle \left[ \theta _{1}\right] ,\left[ \theta _{2}\right]
,\ldots,\left[ \theta _{s}\right] \right\rangle \in \left\{ \mathcal{O}\left(
\mathbb{W}\right) :\mathbb{W}\in \mathcal{T}_{s}\left( \mathcal{T}\right)
\right\} \longleftrightarrow \left[ \mathcal{T}_{\theta }\right] \in \left\{ %
\left[ \mathcal{T}_{\vartheta }\right] :\mathcal{T}_{\vartheta }\in \mathcal{%
E}\left( \mathcal{T},{\mathbb{V}}\right) \right\} \text{,}
\end{equation*}
where $ \theta \left( x,y,z\right) =\sum_{i=1}^s \theta
_{i}\left( x,y,z\right) e_{i}$. We call $\mathcal{T}_{\theta }$ the Lie triple system
corresponding to the representative $\left\langle \left[ \theta _{1}\right] ,%
\left[ \theta _{2}\right] ,\ldots,\left[ \theta _{s}\right] \right\rangle $. Finally, we have the following correspondence theorem.

\begin{theorem}
There exists a one-to-one correspondence between the set of $Aut\left(
\mathcal{T}\right) $-orbits on $\mathcal{T}_{s}\left( \mathcal{T}\right) $
and the set of isomorphism classes of $\mathcal{E}\left( \mathcal{T},{%
\mathbb{V}}\right) $.
\end{theorem}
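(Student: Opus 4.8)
The plan is to turn the correspondence displayed just above into a precise map and then verify it is a well-defined bijection, drawing entirely on the lemmas proved in this section. I would define $\Psi$ on orbits by sending $\mathcal{O}(\mathbb{W})$, where $\mathbb{W}=\langle [\theta_1],[\theta_2],\ldots,[\theta_s]\rangle \in \mathcal{T}_s(\mathcal{T})$, to the isomorphism class $[\mathcal{T}_\theta]$ with $\theta(x,y,z)=\sum_{i=1}^s \theta_i(x,y,z)e_i$. First I would check that the target is correct: since $\mathbb{W}\in\mathcal{T}_s(\mathcal{T})$ we have $\mathrm{Rad}(\theta)\cap\mathrm{Ann}(\mathcal{T})=\cap_{i=1}^s\mathrm{Rad}(\theta_i)\cap\mathrm{Ann}(\mathcal{T})=0$, so $\mathrm{Ann}(\mathcal{T}_\theta)=\mathbb{V}$ by Lemma \ref{annihilator of extension}; moreover $\mathbb{W}\in G_s(H^3(\mathcal{T},\mathbb{F}))$ forces $[\theta_1],\ldots,[\theta_s]$ to be linearly independent, so $\mathcal{T}_\theta$ has no annihilator component by Lemma \ref{cent.comp}. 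Hence $\mathcal{T}_\theta\in\mathcal{E}(\mathcal{T},\mathbb{V})$ and $\Psi$ indeed lands among the isomorphism classes of $\mathcal{E}(\mathcal{T},\mathbb{V})$.

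Next I would establish that $\Psi$ is well-defined, which is the only step requiring any care, as two independent choices are involved. Changing the cocycle representatives $\theta_i$ within their classes $[\theta_i]$ replaces $\theta$ by some $\vartheta$ with $[\theta]=[\vartheta]$, whence $\mathcal{T}_\theta\cong\mathcal{T}_\vartheta$ by Lemma \ref{equal}. Changing the spanning set of $\mathbb{W}$, or replacing $\mathbb{W}$ by another representative $\phi\mathbb{W}$ of the same orbit with $\phi\in Aut(\mathcal{T})$, keeps the two associated subspaces in one and the same $Aut(\mathcal{T})$-orbit, so the resulting extensions are isomorphic by the backward implication of the preceding lemma (the orbit form of Lemma \ref{one2one1}). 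Thus $[\mathcal{T}_\theta]$ depends only on $\mathcal{O}(\mathbb{W})$, and $\Psi$ is well-defined.

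Finally, bijectivity is immediate from the preceding lemma. Injectivity is its forward implication: if $\Psi(\mathcal{O}(\mathbb{W}_\theta))=\Psi(\mathcal{O}(\mathbb{W}_\vartheta))$, that is $[\mathcal{T}_\theta]=[\mathcal{T}_\vartheta]$, then $\mathcal{O}\langle[\theta_1],\ldots,[\theta_s]\rangle=\mathcal{O}\langle[\vartheta_1],\ldots,[\vartheta_s]\rangle$, so the two orbits coincide. Surjectivity is built into the definition of $\mathcal{E}(\mathcal{T},\mathbb{V})$: any isomorphism class there is represented by some $\mathcal{T}_\theta$ whose associated subspace $\langle[\theta_1],\ldots,[\theta_s]\rangle$ lies in $\mathcal{T}_s(\mathcal{T})$, and the orbit of that subspace is sent by $\Psi$ to $[\mathcal{T}_\theta]$.

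The main obstacle, such as it is, is purely formal: confirming that $\Psi$ descends to orbits through both layers of choice and lands in the correct target set. Once the preceding lemma is available, injectivity and surjectivity are bookkeeping, so the substance of the argument is the well-definedness check together with the verification that $\mathcal{T}_\theta\in\mathcal{E}(\mathcal{T},\mathbb{V})$.
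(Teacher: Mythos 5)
Your proposal is correct and follows the same route as the paper: the paper states this theorem as an immediate consequence of the preceding lemma (the orbit form of Lemma \ref{one2one1}), displaying exactly the correspondence $\mathcal{O}\left\langle \left[\theta_1\right],\ldots,\left[\theta_s\right]\right\rangle \longleftrightarrow \left[\mathcal{T}_{\theta}\right]$ that you formalize as $\Psi$. Your additional checks (membership of $\mathcal{T}_{\theta}$ in $\mathcal{E}\left(\mathcal{T},\mathbb{V}\right)$ via Lemma \ref{annihilator of extension} and Lemma \ref{cent.comp}, and well-definedness via Lemma \ref{equal}) are precisely the bookkeeping the paper leaves implicit in the discussion preceding the theorem, so the two arguments coincide in substance.
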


By this theorem, we can construct all Lie triple systems of dimension $n$
with $s$-dimensional annihilator, given those algebras of dimension $n-s$,
following this steps:

\begin{enumerate}
\item For a Lie triple system $\mathcal{T}$ of dimension $n-s$, determine $%
H^{3}\left( \mathcal{T},\mathbb{F}\right) $, $\mathrm{Ann}\left(
\mathcal{T}\right) $ and $Aut\left( \mathcal{T}\right) $.

\item Determine the set of $Aut\left( \mathcal{T}\right) $-orbits on $%
\mathcal{T}_{s}\left( \mathcal{T}\right) $.

\item For each orbit, construct the Lie triple system corresponding to a
representative of it.
\end{enumerate}

\section{The algebraic classification of nilpotent Lie triple systems up to dimension four}

Let us introduce the following notations. 
 Let $\mathcal{T}$ be a Lie triple system with a basis $\left\{
e_{1},e_{2},\ldots ,e_{n}\right\} $. 
Let us now define a trilinear form $\Delta _{i,j,k}:\mathcal{T}\times
\mathcal{T}\times \mathcal{T}\longrightarrow {\mathbb{F}}$ by%
\begin{equation*}
\Delta _{i,j,k}(e_{l},e_{m},e_{t})=\left\{
\begin{tabular}{ll}
$1$ & if $\left( i,j,k\right) =\left( l,m,t\right) $, \\
$-1$ & if $\left( j,i,k\right) =\left( l,m,t\right) $, \\
$0$ & otherwise.%
\end{tabular}
\right.
\end{equation*}
Then $Z^{3}\left( \mathcal{T},{\mathbb{F}}\right) $ is a subspace
of $\left\langle \Delta _{i,j,k}:1\leq i<j\leq n,1\leq k\leq n\right\rangle $%
. In particular, we have $Z^{3}\left( \mathcal{T},{\mathbb{F}}\right) =0$ if $%
\mathrm{dim}\,\mathcal{T}=1$ and $\mathrm{dim}\,Z^{3}\left( \mathcal{T},{\mathbb{F}
}\right) \leq 2$ if $\mathrm{dim}\,\mathcal{T}=2$. Further, if $\mathrm{dim}\,\mathcal{T}\geq 3$%
, then $\mathrm{dim}\,Z^{3}\left( \mathcal{T},{\mathbb{F}}\right) \leq 2%
\dbinom{n+1}{3}$.

A cocycle $%
\theta \in Z^{3}\left( \mathcal{T},{\mathbb{F}}\right) $ is uniquely determined by its values $\theta
(e_{i},e_{j},e_{k})=\alpha _{i,j,k}$ and it may be represented by a block
matrix $\mathcal{C}=%
\begin{pmatrix}
\mathcal{C}_{1} & \mathcal{C}_{2} & \cdots & \mathcal{C}_{n}
\end{pmatrix}
$ where for  $t \in \left\{
1,2,\ldots ,n\right\} $ the $\mathcal{C}_{t}$  is the matrix representing the skew symmetric
bilinear form $ 
\mathcal{T}\times \mathcal{T}\longrightarrow {\mathbb{F}}$ defined by $ \left( x,y\right) \longmapsto \theta \left( x,y,e_{t}\right)$
(i.e., if $\mathcal{C}_{t}=\big(c_{ij}
\big)$ then $c_{ij}=\alpha _{i,j,t}$). We call $\mathcal{C}$ the \emph{%
matrix form }of\emph{\ }$\theta $.

Let $\phi =\big(a_{ij}\big)\in \mathrm{Aut}\left( \mathcal{T}\right) $, $%
\theta \in Z^{3}\left( \mathcal{T},{\mathbb{F}}\right) $, $%
\mathcal{C}=%
\begin{pmatrix}
\mathcal{C}_{1} & \mathcal{C}_{2} & \cdots & \mathcal{C}_{n}
\end{pmatrix}
$ be the matrix form of $\theta $ and $\mathcal{C}^{\prime }=%
\begin{pmatrix}
\mathcal{C}_{1}^{\prime } & \mathcal{C}_{2}^{\prime } & \cdots & \mathcal{C}
_{n}^{\prime }
\end{pmatrix}
$ be the matrix form of $\phi \theta $. Then
\begin{eqnarray*}
  \phi \theta \left( x,y,e_{k}\right)
=\theta \left( \phi \left( x\right) ,\phi \left( y\right) ,\phi \left(
e_{k}\right) \right) =\theta \Bigl( \phi \left( x\right) ,\phi \left( y\right) ,\underset{i=1}{%
\overset{n}{\sum }}a_{ik}e_{i}\Bigr) =\underset{i=1}{\overset{n}{\sum }}a_{ik}\theta \left( \phi \left(
x\right) ,\phi \left( y\right) ,e_{i}\right),
\end{eqnarray*}
thus, if $\mathcal{B}=\underset{i=1}{\overset{n}{\sum }}a_{ik}\mathcal{C}_{i}$%
, therefore $\mathcal{C}_{k}^{\prime }=\phi ^{t}\mathcal{B}\phi $.

\begin{example}
Consider the    abelian $2$%
-dimensional Lie triple system $\mathcal{T}:=\left\langle e_{1},e_{2}\right\rangle $. Then $Z^{3}\left( \mathcal{T},{\mathbb{F}}
\right) =\left\langle \Delta _{1,2,1},\Delta _{1,2,2}\right\rangle $ and $%
\mathrm{Aut}\left( \mathcal{T}\right) =\{\phi =\left( a_{ij}\right) \in {%
\mathcal{M}}_{2\times 2}({\mathbb{F}}):\det \phi \neq 0\}$. Let $\theta
=\alpha _{1,2,1}\Delta _{1,2,1}+\alpha _{1,2,2}\Delta _{1,2,2}$. Then the
matrix form of $\theta $ is $\mathcal{C}=%
\begin{pmatrix}
\mathcal{C}_{1} & \mathcal{C}_{2}
\end{pmatrix}
$ where $\mathcal{C}_{1}=%
\begin{pmatrix}
0 & \alpha _{1,2,1} \\
-\alpha _{1,2,1} & 0%
\end{pmatrix}
,\mathcal{C}_{2}=%
\begin{pmatrix}
0 & \alpha _{1,2,2} \\
-\alpha _{1,2,2} & 0%
\end{pmatrix}
$. If $\mathcal{C}^{\prime }=%
\begin{pmatrix}
\mathcal{C}_{1}^{\prime } & \mathcal{C}_{2}^{\prime }
\end{pmatrix}
$ is the matrix form of $\phi \theta $, then%
\begin{eqnarray*}
\mathcal{C}_{1}^{\prime } &=&\phi ^{t}\left( a_{11}
\begin{pmatrix}
0 & \alpha _{1,2,1} \\
-\alpha _{1,2,1} & 0%
\end{pmatrix}
+a_{21}
\begin{pmatrix}
0 & \alpha _{1,2,2} \\
-\alpha _{1,2,2} & 0%
\end{pmatrix}
\right) \phi , \\
\mathcal{C}_{2}^{\prime } &=&\phi ^{t}\left( a_{12}
\begin{pmatrix}
0 & \alpha _{1,2,1} \\
-\alpha _{1,2,1} & 0%
\end{pmatrix}
+a_{22}
\begin{pmatrix}
0 & \alpha _{1,2,2} \\
-\alpha _{1,2,2} & 0%
\end{pmatrix}
\right) \phi .
\end{eqnarray*}
\end{example}

\begin{proposition}
Let $\mathcal{T}$ be a Lie triple system and $\mathrm{dim}\,\mathcal{T}\leq 2$. If $\mathcal{T}$
is nilpotent, then $\mathcal{T}$ is abelian. 
\end{proposition}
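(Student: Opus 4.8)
The plan is to prove that the derived ideal $\mathcal{T}^{(1)}=[\mathcal{T},\mathcal{T},\mathcal{T}]$ must vanish, since $\mathcal{T}$ being abelian is exactly the statement $\mathcal{T}^{(1)}=0$. The case $\dim\mathcal{T}\leq 1$ is immediate: if $\mathcal{T}=\langle e_{1}\rangle$, then identity (A1) with $x=y=e_{1}$ gives $2[e_{1},e_{1},z]=0$, whence $[e_{1},e_{1},e_{1}]=0$ because $\mathrm{char}\,\mathbb{F}\neq 2$; every triple product is a scalar multiple of this one, so $\mathcal{T}^{(1)}=0$ without even invoking nilpotency.

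For $\dim\mathcal{T}=2$ I would fix a basis $\{e_{1},e_{2}\}$ and first exploit the skew-symmetry (A1) to factor the bracket. Since $[x,y,z]$ is alternating in its first two arguments and the space of alternating bilinear forms on a two-dimensional space is one-dimensional, expanding in coordinates $x=x_{1}e_{1}+x_{2}e_{2}$ and $y=y_{1}e_{1}+y_{2}e_{2}$ yields $[x,y,z]=(x_{1}y_{2}-x_{2}y_{1})\,L(z)$, where $L:=[e_{1},e_{2},\mbox{\textendash}]\colon\mathcal{T}\to\mathcal{T}$ is a single linear operator. Consequently $\mathcal{T}^{(1)}=\mathrm{Im}(L)$, because the coefficient $x_{1}y_{2}-x_{2}y_{1}$ can be made nonzero (for instance with $x=e_{1}$, $y=e_{2}$) while $L(z)$ ranges over the whole image as $z$ varies.

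The crux is then to show that $\mathcal{T}^{(1)}$ reproduces itself under the derived operation. Recall that for $\mathcal{I}=\mathcal{T}$ one has $\mathcal{T}^{(n+1)}=[\mathcal{T}^{(n)},\mathcal{T},\mathcal{T}]$. Suppose, for contradiction, that $\mathcal{T}^{(1)}=\mathrm{Im}(L)\neq 0$, and choose $0\neq w\in\mathcal{T}^{(1)}$. Extending $w$ to a basis $\{w,u\}$ of $\mathcal{T}$, the alternating coefficient attached to the pair $(w,u)$ is the nonzero determinant of the change of basis, so $[w,u,z]=c\,L(z)$ with $c\neq 0$ for arbitrary $z$. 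Hence $\mathcal{T}^{(2)}=[\mathcal{T}^{(1)},\mathcal{T},\mathcal{T}]\supseteq\mathrm{Im}(L)=\mathcal{T}^{(1)}$, while the reverse inclusion $\mathcal{T}^{(2)}\subseteq\mathcal{T}^{(1)}$ is clear; thus $\mathcal{T}^{(2)}=\mathcal{T}^{(1)}$. Iterating gives $\mathcal{T}^{(n)}=\mathcal{T}^{(1)}\neq 0$ for all $n\geq 1$, contradicting the nilpotency of $\mathcal{T}$. Therefore $\mathcal{T}^{(1)}=0$ and $\mathcal{T}$ is abelian.

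I expect the main obstacle to be exactly this self-reproducing identity $\mathcal{T}^{(2)}=\mathcal{T}^{(1)}$: everything hinges on guaranteeing that some alternating coefficient $x_{1}y_{2}-x_{2}y_{1}$, with the first argument drawn from $\mathcal{T}^{(1)}$, is nonzero, which is why extending a nonzero vector of $\mathcal{T}^{(1)}$ to a basis is the decisive move. It is worth remarking that the argument uses only the skew-symmetry (A1) together with $\mathrm{char}\,\mathbb{F}\neq 2$; identities (A2) and (A3) are not needed, since in dimension at most two they impose no further constraints on the bracket.
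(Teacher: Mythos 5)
Your proof is correct, but it takes a genuinely different route from the paper's. The paper treats this proposition as a first application of the annihilator-extension machinery it has just developed: nilpotency forces $\mathrm{Ann}(\mathcal{T})\neq 0$, so in dimension $2$ the system $\mathcal{T}$ is a $1$-dimensional annihilator extension of the $1$-dimensional system $\mathcal{T}'$, and since $Z^{3}(\mathcal{T}',\mathbb{F})=0$ (every skew trilinear form on a line vanishes when $\mathrm{char}\,\mathbb{F}\neq 2$), any such extension is trivial, hence $\mathcal{T}$ is abelian. You instead argue directly from the axioms: in dimension $2$, skew-symmetry (A1) factors the bracket as $[x,y,z]=\omega(x,y)L(z)$ with $L=[e_{1},e_{2},\cdot\,]$, so $\mathcal{T}^{(1)}=\mathrm{Im}(L)$, and extending a nonzero $w\in\mathcal{T}^{(1)}$ to a basis yields $\mathcal{T}^{(2)}=\mathcal{T}^{(1)}$, which rules out nilpotency unless $\mathcal{T}^{(1)}=0$. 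Your argument is more elementary and self-contained — it uses only (A1), bilinearity and $\mathrm{char}\,\mathbb{F}\neq 2$, with no cocycle formalism — and it actually proves something slightly stronger, namely that every non-abelian $2$-dimensional Lie triple system has stationary nonzero derived series $\mathcal{T}^{(n)}=\mathcal{T}^{(1)}$ for all $n\geq 1$. What the paper's proof buys is brevity and consistency: it exercises the extension method (nilpotent $\Rightarrow$ $\mathrm{Ann}\neq 0$ $\Rightarrow$ extension of a smaller system) that drives the entire classification in the following sections.

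One caveat, which does not affect the validity of your proof: your closing remark that (A2) and (A3) impose no further constraints in dimension at most two is only half right. Identity (A2) is indeed automatic given (A1) in dimension $\leq 2$, but (A3) is a real restriction in dimension $2$: writing $[x,y,z]=\omega(x,y)L(z)$, identity (A3) reduces to $\bigl(\omega(Lx,y)+\omega(x,Ly)\bigr)L=0$, i.e.\ $\mathrm{tr}(L)\,L=0$, so a nonzero $L$ must be traceless (for instance $L=\mathrm{id}$ violates (A3)). Since your argument never invokes (A3), the proof stands; only the side remark should be corrected.
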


\begin{proof}
Since $\mathcal{T}$ is nilpotent, $\mathrm{Ann}\left( \mathcal{T}\right) \neq
0 $. So if $\mathrm{dim}\,\mathcal{T}=1$, then $\mathcal{T}=\mathrm{Ann}\left( \mathcal{T}
\right) $. If $\mathrm{dim}\,\mathcal{T}=2$, then $\mathcal{T}$ is a annihilator
extension of $1$-dimensional Lie triple system $\mathcal{T}^{\prime }$. But $Z^{3}\left( \mathcal{T}^{\prime },{\mathbb{F}}\right) =0$. So, any annihilator
extension of $\mathcal{T}^{\prime }$ is trivial. Thus, $\mathcal{T}$ is
abelian.
\end{proof}

We denote by $\mathcal{T}_{1,1}$ the abelian Lie triple system of dimension $1$  and  by $\mathcal{T}_{2,1}$ the
abelian Lie triple system of dimension $2$.

\begin{remark}
\label{[1 0]}Let $X=%
\begin{pmatrix}
\alpha & \beta%
\end{pmatrix}
\in {\mathcal{M}}_{1\times 2}({\mathbb{F}})$ and $X\neq 0$. Then there
exists an invertible matrix $A\in {\mathcal{M}}_{2\times 2}({\mathbb{F}})$
such that $XA=%
\begin{pmatrix}
1 & 0%
\end{pmatrix}
$. To see this, suppose first that $\alpha \neq 0$. Then $%
\begin{pmatrix}
\alpha & \beta%
\end{pmatrix}
\begin{pmatrix}
\alpha ^{-1} & -\beta \\
0 & \alpha%
\end{pmatrix}
=%
\begin{pmatrix}
1 & 0%
\end{pmatrix}
$. Assume now that $\alpha =0$. Then $%
\begin{pmatrix}
0 & \beta%
\end{pmatrix}
\begin{pmatrix}
0 & 1 \\
\beta ^{-1} & 0%
\end{pmatrix}
=\allowbreak
\begin{pmatrix}
1 & 0%
\end{pmatrix}
$.
\end{remark}

\begin{proposition}
Let $\mathcal{T}$ be a nilpotent Lie triple system of dimension $3$. Then $\mathcal{T}$
is isomorphic to one of the following:

\begin{itemize}
\item $\mathcal{T}_{3,1}=\mathcal{T}_{2,1}\oplus \mathcal{T}_{1,1}$ (abelian
Lie triple system).

\item $\mathcal{T}_{3,2}:\left[ e_{1},e_{2},e_{1}\right] =e_{3}$.
\end{itemize}
\end{proposition}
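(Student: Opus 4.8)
The plan is to invoke the extension machinery built above, treating $\mathcal{T}$ as an annihilator extension of a lower-dimensional nilpotent Lie triple system. Since $\mathcal{T}$ is nonzero and nilpotent, $\mathrm{Ann}(\mathcal{T})\neq 0$, so setting $m:=\dim\mathrm{Ann}(\mathcal{T})\in\{1,2,3\}$ we may write $\mathcal{T}\cong\mathcal{T}'_{\theta}$ with $\mathcal{T}'\cong\mathcal{T}/\mathrm{Ann}(\mathcal{T})$ nilpotent of dimension $3-m$. If $m=3$ then $\mathcal{T}$ is abelian, giving $\mathcal{T}_{3,1}$. Otherwise $\dim\mathcal{T}'\in\{1,2\}$, and by the previous proposition $\mathcal{T}'$ is abelian. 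Moreover a non-abelian $\mathcal{T}$ cannot have an annihilator component: such a component would split off a summand $\mathcal{T}_{1,1}$, leaving a $2$-dimensional nilpotent complement which is abelian, forcing $\mathcal{T}$ itself abelian. Since $Z^{3}(\mathcal{T}',\mathbb{F})=0$ whenever $\dim\mathcal{T}'=1$, the case $m=2$ yields only the trivial abelian extension. Hence every non-abelian example is a $1$-dimensional annihilator extension ($s=1$) of $\mathcal{T}'=\mathcal{T}_{2,1}$ without annihilator component, and by the correspondence theorem it is classified by the $\mathrm{Aut}(\mathcal{T}_{2,1})$-orbits on $\mathcal{T}_{1}(\mathcal{T}_{2,1})$.

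The next step is to assemble the data for $\mathcal{T}_{2,1}=\langle e_1,e_2\rangle$. From the earlier example, $Z^{3}(\mathcal{T}_{2,1},\mathbb{F})=\langle\Delta_{1,2,1},\Delta_{1,2,2}\rangle$. As $\mathcal{T}_{2,1}$ is abelian, $[\mathcal{T}_{2,1},\mathcal{T}_{2,1},\mathcal{T}_{2,1}]=0$ and thus $B^{3}(\mathcal{T}_{2,1},\mathbb{F})=0$, giving $H^{3}(\mathcal{T}_{2,1},\mathbb{F})=Z^{3}(\mathcal{T}_{2,1},\mathbb{F})\cong\mathbb{F}^{2}$, while $\mathrm{Aut}(\mathcal{T}_{2,1})=GL_{2}(\mathbb{F})$. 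Writing $\theta=\alpha\Delta_{1,2,1}+\beta\Delta_{1,2,2}$, a short direct check gives $\mathrm{Rad}(\theta)=0$ for every $(\alpha,\beta)\neq(0,0)$. Since $\mathrm{Ann}(\mathcal{T}_{2,1})=\mathcal{T}_{2,1}$, the condition defining $\mathcal{T}_{1}(\mathcal{T}_{2,1})$ reduces to $\mathrm{Rad}(\theta)=0$, so $\mathcal{T}_{1}(\mathcal{T}_{2,1})$ is all of $G_{1}(H^{3}(\mathcal{T}_{2,1},\mathbb{F}))$.

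It then remains to compute the $GL_{2}(\mathbb{F})$-orbits on $G_{1}(H^{3}(\mathcal{T}_{2,1},\mathbb{F}))$. Identifying the two generators with the forms $(x,y,z)\mapsto(x_1y_2-x_2y_1)z_1$ and $(x,y,z)\mapsto(x_1y_2-x_2y_1)z_2$, one finds that $\phi=(a_{ij})$ sends the coordinate row vector $(\alpha,\beta)$ of $\theta$ to $\det(\phi)\,(\alpha,\beta)\,\phi$. On lines the scalar $\det(\phi)$ is irrelevant, so the induced action on $G_{1}(H^{3}(\mathcal{T}_{2,1},\mathbb{F}))$ is, up to scalar, right multiplication by $\phi$, and this is transitive. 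Therefore there is a single orbit, with representative $\theta=\Delta_{1,2,1}$, whose extension is exactly $\mathcal{T}_{3,2}$, defined by $[e_1,e_2,e_1]=e_3$. Since $\mathcal{T}_{3,1}$ is abelian and $\mathcal{T}_{3,2}$ is not, they are non-isomorphic, and the list is exhaustive.

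The main obstacle is the orbit computation of the final paragraph: one has to transcribe the action $\phi\theta(x,y,z)=\theta(\phi(x),\phi(y),\phi(z))$ into the chosen coordinates on $H^{3}(\mathcal{T}_{2,1},\mathbb{F})$ and recognize it as the standard $GL_{2}(\mathbb{F})$-action on the projective line. Once the transitivity is established, the existence of a unique non-abelian isomorphism class follows immediately.
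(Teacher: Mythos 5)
Your proposal is correct and follows essentially the same route as the paper: reduce the non-abelian case to a $1$-dimensional annihilator extension of $\mathcal{T}_{2,1}$, identify $H^{3}(\mathcal{T}_{2,1},\mathbb{F})=\langle\Delta_{1,2,1},\Delta_{1,2,2}\rangle$ with $\mathrm{Aut}(\mathcal{T}_{2,1})=GL_{2}(\mathbb{F})$, and show the action $(\alpha,\beta)\mapsto\det(\phi)(\alpha,\beta)\phi$ has a single orbit on lines, normalizing to $\Delta_{1,2,1}$ and hence $\mathcal{T}_{3,2}$. Your extra details (ruling out $m=2$ and annihilator components explicitly, checking $\mathrm{Rad}(\theta)=0$, and proving transitivity via the projective line instead of the paper's Remark \ref{[1 0]}) are fine but only fill in steps the paper leaves implicit.
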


\begin{proof}
Suppose that $\mathcal{T}$ is not abelian. Then $\mathcal{T}$ is a $1$%
-dimensional annihilator extension of $\mathcal{T}_{2,1}$. Since, we have $Z^{3}\left( \mathcal{T}_{2,1},{\mathbb{F}}\right) =\left\langle \Delta
_{1,2,1},\Delta _{1,2,2}\right\rangle $ and $B^{3}\left( \mathcal{T%
}_{2,1},{\mathbb{F}}\right) =0$, then $Z^{3}\left( \mathcal{T}_{2,1},%
{\mathbb{F}}\right) =H^{3}\left( \mathcal{T}_{2,1},{\mathbb{F}}
\right) $. Furthermore, we have $\mathrm{Aut}\left( \mathcal{T}
_{2,1}\right) = GL({\mathcal{T}
_{2,1}})$.

Now, choose an arbitrary subspace $\mathbb{W}\in \mathcal{T}_{1}\left( \mathcal{T}
_{2,1}\right) $. Then $\mathbb{W}$ is spanned by $\theta =\alpha
_{1,2,1}\Delta _{1,2,1}+\alpha _{1,2,2}\Delta _{1,2,2}$ such that $\left(
\alpha _{1,2,1},\alpha _{1,2,2}\right) \neq \left( 0,0\right) $. Let $\phi =%
\big(a_{ij}\big)\in $ $\mathrm{Aut}\left( \mathcal{T}_{2,1}\right) $. Write $%
\phi \theta =\beta _{1,2,1}\Delta _{1,2,1}+\beta _{1,2,2}\Delta _{1,2,2}$.
Then%
\begin{eqnarray*}
\beta _{1,2,1} &=&\left( a_{11}a_{22}-a_{12}a_{21}\right) \left(
a_{11}\alpha _{1,2,1}+a_{21}\alpha _{1,2,2}\right) , \\
\beta _{1,2,2} &=&\left( a_{11}a_{22}-a_{12}a_{21}\right) \left(
a_{12}\alpha _{1,2,1}+a_{22}\alpha _{1,2,2}\right) .
\end{eqnarray*}
Set $\alpha _{1,2,1}^{\prime }=\det \left( \phi \right) \alpha _{1,2,1}$ and
$\alpha _{1,2,2}^{\prime }=\det \left( \phi \right) \alpha _{1,2,2}$. Then $%
\beta _{1,2,1}=a_{11}\alpha _{1,2,1}^{\prime }+a_{21}\alpha _{1,2,2}^{\prime
},\beta _{1,2,2}=a_{12}\alpha _{1,2,1}^{\prime }+a_{22}\alpha
_{1,2,2}^{\prime }$ and $\left( \alpha _{1,2,1}^{\prime },\alpha
_{1,2,2}^{\prime }\right) \neq \left( 0,0\right) $. By Remark \ref{[1 0]},
there exists a $\phi \in $ $\mathrm{Aut}\left( \mathcal{T}_{2,1}\right) $
such that $\phi \theta =\Delta _{1,2,1}$. So we get the Lie triple system $\mathcal{T}
_{3,2}$.
\end{proof}

\begin{theorem} \label{classification4}
Let $\mathcal{T}$ be a nilpotent Lie triple system of dimension $4$ over an
algebraically closed field $\mathbb{F}$ of characteristic $\neq 2,3$. Then $%
\mathcal{T}$ is isomorphic to one of the following:

\begin{itemize}
\item $\mathcal{T}_{4,1}=\mathcal{T}_{3,1}\oplus \mathcal{T}_{1,1}$ (abelian
Lie triple system).

\item $\mathcal{T}_{4,2}=\mathcal{T}_{3,2}\oplus \mathcal{T}_{1,1}:\left[
e_{1},e_{2},e_{1}\right] =e_{3}.$

\item $\mathcal{T}_{4,3}:\left[ e_{1},e_{2},e_{1}\right] =e_{3},\left[
e_{1},e_{2},e_{2}\right] =e_{4}.$

\item $\mathcal{T}_{4,4}:\left[ e_{2},e_{3},e_{2}\right] =e_{4},\left[
e_{3},e_{1},e_{3}\right] =e_{4}.$

\item $\mathcal{T}_{4,5}:\left[ e_{2},e_{3},e_{1}\right] =e_{4},\left[
e_{3},e_{1},e_{2}\right] =e_{4},\left[ e_{2},e_{1},e_{3}\right] =2e_{4},%
\left[ e_{2},e_{3},e_{2}\right] =e_{4}.$

\item $\mathcal{T}_{4,6}^{\lambda }
:\left[ e_{1},e_{2},e_{3}\right] =-\left( \lambda +1\right) e_{4},\left[
e_{2},e_{3},e_{1}\right] =\lambda e_{4},\left[ e_{3},e_{1},e_{2}\right]
=e_{4}.$

\item $\mathcal{T}_{4,7}:\left[ e_{1},e_{2},e_{1}\right] =e_{3},\left[
e_{1},e_{2},e_{3}\right] =e_{4},\left[ e_{1},e_{3},e_{2}\right] =e_{4}.$

\item $\mathcal{T}_{4,8}:\left[ e_{1},e_{2},e_{1}\right] =e_{3},\left[
e_{1},e_{3},e_{1}\right] =e_{4},\left[ e_{1},e_{2},e_{2}\right] =e_{4}.$

\item $\mathcal{T}_{4,9}:\left[ e_{1},e_{2},e_{1}\right] =e_{3},\left[
e_{1},e_{3},e_{1}\right] =e_{4}.$
\end{itemize}

Among these Lie triple systems, there are precisely the following isomorphisms:

\begin{itemize}
\item $\mathcal{T}_{4,6}^{0} \cong \mathcal{T}_{4,6}^{-1}$.
\item $\mathcal{T}_{4,6}^{\alpha }\left( \alpha ^{2}+\alpha \neq 0\right)
\cong \mathcal{T}_{4,6}^{\beta }\left( \beta ^{2}+\beta \neq 0\right) $ if and only if $%
\allowbreak \xi \left( \alpha \right) =\xi \left( \beta \right) $ where%
\begin{equation*}
\xi \left( \lambda \right) =\frac{\left( \lambda ^{2}+\lambda +1\right) ^{3}
}{\lambda ^{2}\left( \lambda +1\right) ^{2}}:\lambda ^{2}+\lambda \neq 0.
\end{equation*}
\end{itemize}
\end{theorem}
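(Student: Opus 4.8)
The plan is to apply the extension machinery developed above, treating each four-dimensional nilpotent system $\mathcal{T}$ as an annihilator extension of the (already classified) nilpotent Lie triple systems of dimension at most three, namely $\mathcal{T}_{1,1}$, $\mathcal{T}_{2,1}$, $\mathcal{T}_{3,1}$ and $\mathcal{T}_{3,2}$. First I would dispose of the abelian case, giving $\mathcal{T}_{4,1}$, and then split the remaining systems according to $s := \dim \mathrm{Ann}(\mathcal{T})$. Since $\mathcal{T}$ is nilpotent and non-abelian, $s \in \{1,2,3\}$ and $\mathcal{T}/\mathrm{Ann}(\mathcal{T})$ is a nilpotent Lie triple system of dimension $4-s$. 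Because $Z^{3}(\mathcal{T}_{1,1},\mathbb{F})=0$, the value $s=3$ produces nothing new, so only $s=2$ (extending $\mathcal{T}_{2,1}$) and $s=1$ (extending $\mathcal{T}_{3,1}$ or $\mathcal{T}_{3,2}$) remain to be analyzed.

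Systems with an annihilator component are handled separately: by Lemma \ref{cent.comp} they are exactly the direct sums of a lower-dimensional nilpotent system with $\mathcal{T}_{1,1}$, which yields $\mathcal{T}_{4,2}=\mathcal{T}_{3,2}\oplus \mathcal{T}_{1,1}$, the only non-abelian three-dimensional system being $\mathcal{T}_{3,2}$. For the systems \emph{without} annihilator component the correspondence theorem reduces the classification to describing the $\mathrm{Aut}(\mathcal{T}')$-orbits on $\mathcal{T}_{s}(\mathcal{T}')$. For each relevant $\mathcal{T}'$ I would first compute $Z^{3}(\mathcal{T}',\mathbb{F})$, $B^{3}(\mathcal{T}',\mathbb{F})$ and $H^{3}(\mathcal{T}',\mathbb{F})$ together with $\mathrm{Aut}(\mathcal{T}')$, and then use the matrix form of cocycles and the transformation rule $\mathcal{C}_{k}^{\prime}=\phi^{t}\mathcal{B}\phi$ to reduce each cocycle to a normal form. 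The case $\mathcal{T}'=\mathcal{T}_{2,1}$, $s=2$, is immediate: $H^{3}=Z^{3}=\langle \Delta_{1,2,1},\Delta_{1,2,2}\rangle$ is two-dimensional, so $G_{2}(H^{3})$ is a single point, its radical is trivial, and the unique orbit gives $\mathcal{T}_{4,3}$. The case $\mathcal{T}'=\mathcal{T}_{3,2}$, $s=1$, is more rigid because $\mathrm{Aut}(\mathcal{T}_{3,2})$ is small and $B^{3}\neq 0$; a finite orbit analysis on the projectivized $H^{3}(\mathcal{T}_{3,2},\mathbb{F})$ restricted to cocycles with $\mathrm{Rad}(\theta)\cap \mathrm{Ann}(\mathcal{T}_{3,2})=0$ should produce exactly $\mathcal{T}_{4,7}$, $\mathcal{T}_{4,8}$ and $\mathcal{T}_{4,9}$.

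The main obstacle is the case $\mathcal{T}'=\mathcal{T}_{3,1}$ with $s=1$. Here $\mathrm{Aut}(\mathcal{T}_{3,1})=GL_{3}(\mathbb{F})$ is the \emph{full} general linear group, and since $\mathcal{T}_{3,1}$ is abelian, condition (B3) is vacuous and $B^{3}=0$, so $H^{3}(\mathcal{T}_{3,1},\mathbb{F})=Z^{3}(\mathcal{T}_{3,1},\mathbb{F})$ is the eight-dimensional space cut out of $\langle \Delta_{i,j,k}\rangle$ by the single cyclic relation forced by (B2). Thus one must classify the $GL_{3}$-orbits on $\mathbb{P}(H^{3})\cong \mathbb{P}^{7}$ subject to the non-degeneracy condition $\mathrm{Rad}(\theta)=0$, a representative with trivial radical corresponding to a genuine four-dimensional system with one-dimensional derived subsystem. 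This is the delicate step: the normalization uses algebraic closure and the characteristic hypothesis to reduce matrix forms to canonical ones, and I expect the \emph{generic} orbits to sweep out a one-parameter family, which is precisely $\mathcal{T}_{4,6}^{\lambda}$, while the remaining special orbits give $\mathcal{T}_{4,4}$ and $\mathcal{T}_{4,5}$.

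Finally, to pin down the isomorphisms inside the family I would invoke Lemma \ref{one2one1}: since $s=1$ and $B^{3}(\mathcal{T}_{3,1},\mathbb{F})=0$, one has $\mathcal{T}_{4,6}^{\alpha}\cong \mathcal{T}_{4,6}^{\beta}$ if and only if some $\phi\in GL_{3}$ carries the spanning cocycle of one to a nonzero scalar multiple of that of the other in $Z^{3}(\mathcal{T}_{3,1},\mathbb{F})$. Writing this out on the matrix forms yields a system of polynomial equations in the entries of $\phi$; eliminating those entries should leave a single relation, which I would package into the $GL_{3}$-invariant $\xi(\lambda)=(\lambda^{2}+\lambda+1)^{3}/\bigl(\lambda^{2}(\lambda+1)^{2}\bigr)$. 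The criterion $\xi(\alpha)=\xi(\beta)$ for $\alpha^{2}+\alpha\neq 0$ and the separate degenerate identification $\mathcal{T}_{4,6}^{0}\cong \mathcal{T}_{4,6}^{-1}$ (the poles of $\xi$) then follow; the fussy part is checking that $\xi$ is genuinely invariant and that distinct level sets yield non-isomorphic systems.
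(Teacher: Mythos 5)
Your skeleton coincides with the paper's proof exactly: remove the abelian system and the annihilator-component cases (giving $\mathcal{T}_{4,1}$, $\mathcal{T}_{4,2}$), split by $s=\dim\mathrm{Ann}(\mathcal{T})$, obtain $\mathcal{T}_{4,3}$ from the one-point Grassmannian over $\mathcal{T}_{2,1}$, and run the orbit analysis over $\mathcal{T}_{3,2}$ with $\mathrm{Aut}(\mathcal{T}_{3,2})$ computed via Corollary \ref{Aut} to get $\mathcal{T}_{4,7}$, $\mathcal{T}_{4,8}$, $\mathcal{T}_{4,9}$; that part of your outline is correct and routine to complete. The genuine gap is in the case you yourself flag as the main obstacle, the $1$-dimensional extensions of $\mathcal{T}_{3,1}$: ``classify the $GL_{3}$-orbits on $\mathbb{P}(H^{3})\cong\mathbb{P}^{7}$ subject to $\mathrm{Rad}(\theta)=0$'' restates the problem, and ``I expect the generic orbits to sweep out a one-parameter family'' asserts the answer without a mechanism. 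Note that the matrix-form machinery you cite does not supply one: a cocycle on $\mathcal{T}_{3,1}$ is a triple of skew-symmetric forms transforming by $\mathcal{C}_{k}^{\prime}=\phi^{t}\mathcal{B}\phi$ with $\mathcal{B}$ a $\phi$-dependent linear combination of the $\mathcal{C}_{i}$, and there is no off-the-shelf canonical-form theory for that action. The paper's decisive idea, absent from your proposal, is the equivariant identification of $Z^{3}(\mathcal{T}_{3,1},\mathbb{F})$ with the \emph{traceless} $3\times3$ matrices: packing the coefficients as
\begin{equation*}
\mathcal{A}_{\theta}=\begin{pmatrix}\alpha_{2,3,1}&\alpha_{2,3,2}&\alpha_{2,3,3}\\ -\alpha_{1,3,1}&-\alpha_{1,3,2}&-\alpha_{1,3,3}\\ \alpha_{1,2,1}&\alpha_{1,2,2}&\alpha_{1,2,3}\end{pmatrix},
\end{equation*}
the relation $\alpha_{1,3,2}=\alpha_{1,2,3}+\alpha_{2,3,1}$ forced by (B2) becomes precisely $\mathrm{tr}(\mathcal{A}_{\theta})=0$, one checks $\mathcal{A}_{\phi\theta}=\det(\phi)\,\phi^{-1}\mathcal{A}_{\theta}\phi$, and $\mathrm{Rad}(\theta)=0$ becomes a rank condition. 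The orbit problem is then conjugation-up-to-scalar of traceless matrices, solved by Jordan canonical form over the algebraically closed field, and the four resulting normal forms give exactly $\mathcal{T}_{4,4}$, $\mathcal{T}_{4,5}$, $\mathcal{T}_{4,6}^{0}$ and $\mathcal{T}_{4,6}^{\lambda}$ with $\lambda^{2}+\lambda\neq0$.

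The same identification is what actually delivers the isomorphism criterion you defer to ``elimination'': your setup via Lemma \ref{one2one1} (projective equivalence of cocycles under $GL_{3}$, legitimate since $B^{3}(\mathcal{T}_{3,1},\mathbb{F})=0$) is right, but through $\mathcal{A}_{\theta}$ it reduces at once to the statement that $\mathrm{diag}(\lambda,1,-(\lambda+1))$ and $\mu\,\mathrm{diag}(\lambda',1,-(\lambda'+1))$ share eigenvalues for some $\mu\neq0$, whence $\lambda'\in\left\{\lambda,\,-(\lambda+1),\,\tfrac{1}{\lambda},\,-\tfrac{\lambda+1}{\lambda},\,-\tfrac{1}{\lambda+1},\,-\tfrac{\lambda}{\lambda+1}\right\}$. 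That gives only necessity; the paper proves sufficiency by writing down six explicit isomorphisms $\sigma_{1},\dots,\sigma_{6}$ (your sketch never addresses this direction) and then verifies that $\xi$ is constant exactly on these sextuples, so that level sets of $\xi$ are orbits — including the separate identification $\mathcal{T}_{4,6}^{0}\cong\mathcal{T}_{4,6}^{-1}$ at the poles, which you correctly isolate. In short: correct architecture, same route as the paper, but the equivariant trace-zero reduction and the two-sided verification of the $\xi$-criterion are missing, and those are the substance of the proof rather than checkable details.
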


\begin{proof}
Suppose that $\mathcal{T}$ has no annihilator components. On the one hand, if $\mathrm{dim}\,\mathrm{Ann}
\left( \mathcal{T}\right) =2$, then $\mathcal{T}$ is a $2$-dimensional
annihilator extension of $\mathcal{T}_{2,1}$. Since $H^{3}\left(
\mathcal{T}_{2,1},{\mathbb{F}}\right) =\left\langle \Delta _{1,2,1},\Delta
_{1,2,2}\right\rangle $, $\mathcal{T}_{1}\left( \mathcal{T}\right) =\{%
\mathbb{W=}\left\langle \Delta _{1,2,1},\Delta _{1,2,2}\right\rangle \}$. So
we get the Lie triple system $\mathcal{T}_{4,3}$. On the other hand, if $\mathrm{dim}\,\mathrm{Ann}\left( \mathcal{T%
}\right) =1$, then $\mathcal{T}$ is a $1$-dimensional annihilator extension of $%
\mathcal{T}_{3,1}$ or $\mathcal{T}_{3,2}$.

Assume first that $\mathcal{T}$
is a $1$-dimensional annihilator extension of $\mathcal{T}_{3,1}$. In this
case, we have%
\begin{equation*}
Z^{3}\left( \mathcal{T}_{3,1},{\mathbb{F}}\right) =\left\langle
\Delta _{1,2,1},\Delta _{1,2,2},\Delta _{1,3,1},\Delta _{1,3,3},\Delta
_{2,3,2},\Delta _{2,3,3},\Delta _{1,2,3}+\Delta _{1,3,2},\Delta
_{2,3,1}+\Delta _{1,3,2}\right\rangle ,
\end{equation*}
and $B^{3}\left( \mathcal{T}_{3,2},{\mathbb{F}}\right) =0$. So $%
H^{3}\left( \mathcal{T}_{3,1},{\mathbb{F}}\right) =Z
^{3}\left( \mathcal{T}_{3,1},{\mathbb{F}}\right) $. Moreover, the
automorphism group $\mathrm{Aut}\left( \mathcal{T}_{3,1}\right)= GL({\mathcal{T}
_{3,1}})$.

Now, choose an arbitrary subspace $\mathbb{W}\in \mathcal{T}_{1}\left( \mathcal{T}
_{3,1}\right) $. Then $\mathbb{W}$ is spanned by%
\begin{equation*}
\theta =\alpha _{1,2,1}\Delta _{1,2,1}+\alpha _{1,3,1}\Delta _{1,3,1}+\alpha
_{2,3,1}\Delta _{2,3,1}+\alpha _{1,2,2}\Delta _{1,2,2}+\alpha _{1,3,2}\Delta
_{1,3,2}+\alpha _{2,3,2}\Delta _{2,3,2}+\alpha _{1,2,3}\Delta
_{1,2,3}+\alpha _{1,3,3}\Delta _{1,3,3}+\alpha _{2,3,3}\Delta _{2,3,3}
\end{equation*}
such that $\alpha _{1,3,2}=\alpha _{1,2,3}+\alpha _{2,3,1}$ and $\mathrm{Rad}
\left( \theta \right) \cap \mathcal{T}_{3,1}=0$. Further, $\mathrm{Rad}
\left( \theta \right) \cap \mathcal{T}_{3,1}=0$ if and only if the matrix $\Omega ^{t}$
has rank $3$ where%
\begin{equation*}
\Omega =%
\begin{pmatrix}
0 & 0 & 0 & \alpha _{1,2,1} & \alpha _{1,2,2} & \alpha _{1,2,3} & \alpha
_{1,3,1} & \alpha _{1,3,2} & \alpha _{1,3,3} \\
-\alpha _{1,2,1} & -\alpha _{1,2,2} & \alpha _{2,1,3} & 0 & 0 & 0 & \alpha
_{2,3,1} & \alpha _{2,3,2} & \alpha _{2,3,3} \\
-\alpha _{1,3,1} & -\alpha _{1,3,2} & -\alpha _{1,3,3} & -\alpha _{2,3,1} &
-\alpha _{2,3,2} & -\alpha _{2,3,3} & 0 & 0 & 0%
\end{pmatrix}
.
\end{equation*}
Let $\phi =\big(a_{ij}\big)\in $ $\mathrm{Aut}\left( \mathcal{T}
_{3,1}\right) $ and write%
\begin{equation*}
\phi \theta =\beta _{1,2,1}\Delta _{1,2,1}+\beta _{1,3,1}\Delta
_{1,3,1}+\beta _{2,3,1}\Delta _{2,3,1}+\beta _{1,2,2}\Delta _{1,2,2}+\beta
_{1,3,2}\Delta _{1,3,2}+\beta _{2,3,2}\Delta _{2,3,2}+\beta _{1,2,3}\Delta
_{1,2,3}+\beta _{1,3,3}\Delta _{1,3,3}+\beta _{2,3,3}\Delta _{2,3,3}.
\end{equation*}
Then%
\begin{eqnarray*}
\beta _{1,2,1} &=&\left( a_{11}a_{22}-a_{12}a_{21}\right) \left(
a_{11}\alpha _{1,2,1}+a_{21}\alpha _{1,2,2}+a_{31}\alpha _{1,2,3}\right)
+\left( a_{11}a_{32}-a_{12}a_{31}\right) \left( a_{11}\alpha
_{1,3,1}+a_{21}\alpha _{1,3,2}+a_{31}\alpha _{1,3,3}\right) \\
&&+\left( a_{21}a_{32}-a_{22}a_{31}\right) \left( a_{11}\alpha
_{2,3,1}+a_{21}\alpha _{2,3,2}+a_{31}\alpha _{2,3,3}\right) , \\
\beta _{1,3,1} &=&\left( a_{11}a_{23}-a_{21}a_{13}\right) \left(
a_{11}\alpha _{1,2,1}+a_{21}\alpha _{1,2,2}+a_{31}\alpha _{1,2,3}\right)
+\left( a_{11}a_{33}-a_{13}a_{31}\right) \left( a_{11}\alpha
_{1,3,1}+a_{21}\alpha _{1,3,2}+a_{31}\alpha _{1,3,3}\right) \\
&&+\left( a_{21}a_{33}-a_{31}a_{23}\right) \left( a_{11}\alpha
_{2,3,1}+a_{21}\alpha _{2,3,2}+a_{31}\alpha _{2,3,3}\right) , \\
\beta _{2,3,1} &=&\left( a_{12}a_{23}-a_{13}a_{22}\right) \left(
a_{11}\alpha _{1,2,1}+a_{21}\alpha _{1,2,2}+a_{31}\alpha _{1,2,3}\right)
+\left( a_{12}a_{33}-a_{13}a_{32}\right) \left( a_{11}\alpha
_{1,3,1}+a_{21}\alpha _{1,3,2}+a_{31}\alpha _{1,3,3}\right) \\
&&+\left( a_{22}a_{33}-a_{23}a_{32}\right) \left( a_{11}\alpha
_{2,3,1}+a_{21}\alpha _{2,3,2}+a_{31}\alpha _{2,3,3}\right) , \\
\beta _{1,2,2} &=&\left( a_{11}a_{22}-a_{12}a_{21}\right) \left(
a_{12}\alpha _{1,2,1}+a_{22}\alpha _{1,2,2}+a_{32}\alpha _{1,2,3}\right)
+\left( a_{11}a_{32}-a_{12}a_{31}\right) \left( a_{12}\alpha
_{1,3,1}+a_{22}\alpha _{1,3,2}+a_{32}\alpha _{1,3,3}\right) \\
&&+\left( a_{21}a_{32}-a_{22}a_{31}\right) \left( a_{12}\alpha
_{2,3,1}+a_{22}\alpha _{2,3,2}+a_{32}\alpha _{2,3,3}\right) , \\
\beta _{1,3,2} &=&\left( a_{11}a_{23}-a_{21}a_{13}\right) \left(
a_{12}\alpha _{1,2,1}+a_{22}\alpha _{1,2,2}+a_{32}\alpha _{1,2,3}\right)
+\left( a_{11}a_{33}-a_{13}a_{31}\right) \left( a_{12}\alpha
_{1,3,1}+a_{22}\alpha _{1,3,2}+a_{32}\alpha _{1,3,3}\right) \\
&&+\left( a_{21}a_{33}-a_{31}a_{23}\right) \left( a_{12}\alpha
_{2,3,1}+a_{22}\alpha _{2,3,2}+a_{32}\alpha _{2,3,3}\right) , \\
\beta _{2,3,2} &=&\left( a_{12}a_{23}-a_{13}a_{22}\right) \left(
a_{12}\alpha _{1,2,1}+a_{22}\alpha _{1,2,2}+a_{32}\alpha _{1,2,3}\right)
+\left( a_{12}a_{33}-a_{13}a_{32}\right) \left( a_{12}\alpha
_{1,3,1}+a_{22}\alpha _{1,3,2}+a_{32}\alpha _{1,3,3}\right) \\
&&+\left( a_{22}a_{33}-a_{23}a_{32}\right) \left( a_{12}\alpha
_{2,3,1}+a_{22}\alpha _{2,3,2}+a_{32}\alpha _{2,3,3}\right) , \\
\beta _{1,2,3} &=&\left( a_{11}a_{22}-a_{12}a_{21}\right) \left(
a_{13}\alpha _{1,2,1}+a_{23}\alpha _{1,2,2}+a_{33}\alpha _{1,2,3}\right)
+\left( a_{11}a_{32}-a_{12}a_{31}\right) \left( a_{13}\alpha
_{1,3,1}+a_{23}\alpha _{1,3,2}+a_{33}\alpha _{1,3,3}\right) \\
&&+\left( a_{21}a_{32}-a_{22}a_{31}\right) \left( a_{13}\alpha
_{2,3,1}+a_{23}\alpha _{2,3,2}+a_{33}\alpha _{2,3,3}\right) , \\
\beta _{1,3,3} &=&\left( a_{11}a_{23}-a_{21}a_{13}\right) \left(
a_{13}\alpha _{1,2,1}+a_{23}\alpha _{1,2,2}+a_{33}\alpha _{1,2,3}\right)
+\left( a_{11}a_{33}-a_{13}a_{31}\right) \left( a_{13}\alpha
_{1,3,1}+a_{23}\alpha _{1,3,2}+a_{33}\alpha _{1,3,3}\right) \\
&&+\left( a_{21}a_{33}-a_{31}a_{23}\right) \left( a_{13}\alpha
_{2,3,1}+a_{23}\alpha _{2,3,2}+a_{33}\alpha _{2,3,3}\right) , \\
\beta _{2,3,3} &=&\left( a_{12}a_{23}-a_{13}a_{22}\right) \left(
a_{13}\alpha _{1,2,1}+a_{23}\alpha _{1,2,2}+a_{33}\alpha _{1,2,3}\right)
+\left( a_{12}a_{33}-a_{13}a_{32}\right) \left( a_{13}\alpha
_{1,3,1}+a_{23}\alpha _{1,3,2}+a_{33}\alpha _{1,3,3}\right) \\
&&+\left( a_{22}a_{33}-a_{23}a_{32}\right) \left( a_{13}\alpha
_{2,3,1}+a_{23}\alpha _{2,3,2}+a_{33}\alpha _{2,3,3}\right) .
\end{eqnarray*}
Let us now associate  $\theta $  with another matrix $\mathcal{A}_{\theta }$
and $\phi \theta $ with  another matrix $\mathcal{A}_{\phi \theta }$ in the
following way:%
\begin{equation*}
\mathcal{A}_{\theta }\mathcal{=}
\begin{pmatrix}
\alpha _{2,3,1} & \alpha _{2,3,2} & \alpha _{2,3,3}\allowbreak \\
-\alpha _{1,3,1} & -\alpha _{1,3,2} & -\alpha _{1,3,3} \\
\alpha _{1,2,1} & \alpha _{1,2,2} & \alpha _{1,2,3}
\end{pmatrix}
,\mathcal{A}_{\phi \theta }\mathcal{=}
\begin{pmatrix}
\beta _{2,3,1} & \beta _{2,3,2} & \beta _{2,3,3}\allowbreak \\
-\beta _{1,3,1} & -\beta _{1,3,2} & -\beta _{1,3,3} \\
\beta _{1,2,1} & \beta _{1,2,2} & \beta _{1,2,3}
\end{pmatrix}
.
\end{equation*}
Then $\mathrm{tr}\left( \mathcal{A}_{\theta }\right) =\mathrm{tr}\left( \mathcal{A}_{\phi
\theta }\right) =0$. Moreover, we have $\mathcal{A}_{\phi \theta }\mathcal{=}
\det \left( \phi \right) \phi ^{-1}\mathcal{A}_{\theta }\phi $. Define%
\[
\mathcal{S}=\left\{ \mathcal{A}_{\theta }:\theta \in Z^{3}\left( \mathcal{T}
_{3,1},{\mathbb{F}}\right) \text{ such that }\mathrm{Rad}\left( \theta
\right) \cap \mathcal{T}_{3,1}=0\right\} .
\]%
Then $\mathrm{Aut}\left( \mathcal{T}_{3,1}\right) $ acts on $\mathcal{S}$ by 
$\phi \bullet \mathcal{A}_{\theta }=\mathcal{A}_{\phi \theta }$. It follows
that any element in $\mathcal{S}$ is in the same orbit as one of the
following:

\begin{tasks}(2)
\task[(i)] $%
\begin{pmatrix}
0 & 1 & 0 \\ 
0 & 0 & 1 \\ 
0 & 0 & 0%
\end{pmatrix}
.$

\task[(ii)] $%
\begin{pmatrix}
\lambda  & 1 & 0 \\ 
0 & \lambda  & 0 \\ 
0 & 0 & -2\lambda 
\end{pmatrix}
:\lambda \neq 0.$

\task[(iii)] $%
\begin{pmatrix}
\lambda  & 0 & 0 \\ 
0 & -\lambda  & 0 \\ 
0 & 0 & 0%
\end{pmatrix}
:\lambda \neq 0.$

\task[(iv)] $%
\begin{pmatrix}
\lambda  & 0 & 0 \\ 
0 & \mu  & 0 \\ 
0 & 0 & -\left( \lambda +\mu \right) 
\end{pmatrix}
:\allowbreak \lambda \mu \left( \lambda +\mu \right) \neq 0.$
\end{tasks}

\bigskip

We will study every case individually.
\begin{itemize}
    \item If $\mathcal{A}_{\theta }$ has the form $($i$)$, then $\theta =\Delta
_{2,3,2}-\Delta _{1,3,3}$. So we get the Lie triple system $\mathcal{T}_{4,4}
$. 
    
    \item If $\mathcal{A}_{\theta }$ has the form $($ii$)$, then%
\[
\vartheta =\lambda ^{-1}\theta =\Delta _{2,3,1}-\Delta _{1,3,2}-2\Delta
_{1,2,3}+\lambda \Delta _{2,3,2}\text{.}
\]%
Since 
\[
\mathcal{A}_{\vartheta }=%
\begin{pmatrix}
1 & 0 & 0 \\ 
0 & \lambda ^{-1} & 0 \\ 
0 & 0 & 1%
\end{pmatrix}
\begin{pmatrix}
1 & 1 & 0 \\ 
0 & 1 & 0 \\ 
0 & 0 & -2%
\end{pmatrix}
\begin{pmatrix}
1 & 0 & 0 \\ 
0 & \lambda  & 0 \\ 
0 & 0 & 1%
\end{pmatrix}
,
\]%
the matrix $\mathcal{A}_{\vartheta }$ is similar to:%
\[
\begin{pmatrix}
1 & 1 & 0 \\ 
0 & 1 & 0 \\ 
0 & 0 & -2%
\end{pmatrix}
.
\]%
So we may assume that $\vartheta =\Delta _{2,3,1}-\Delta _{1,3,2}-2\Delta
_{1,2,3}+\Delta _{2,3,2}$. Since $\mathbb{W=}\left\langle \theta
\right\rangle =\left\langle \vartheta \right\rangle $, we get the triple
system $\mathcal{T}_{4,5}$. 
    
    \item If $\mathcal{A}_{\theta }$ has the form $($iii$)$, we
may assume that $\theta =-\Delta _{1,3,2}- \Delta
_{1,2,3}$. Then we get the Lie triple system $\mathcal{T}_{4,6}^{0}$.
    
    \item If $%
\mathcal{A}_{\theta }$ has the form $($iv$)$, then we may assume that $\theta
=\lambda \Delta _{2,3,1}-\Delta _{1,3,2}-\left( \lambda +1\right) \Delta
_{1,2,3}$ such that $\lambda \left( \lambda +1\right) \neq 0$. So we get the
Lie triple systems $\mathcal{T}_{4,6}^{\lambda }\left( \lambda ^{2}+\lambda \neq
0\right) $. Further, if $\mathcal{T}_{4,6}^{\lambda }\cong \mathcal{T}
_{4,6}^{\lambda ^{\prime }}$, then there exists a non-zero element $\mu \in
\mathbb{F}$ such that the following matrices have the same eigenvalues:%
\begin{equation*}
\begin{pmatrix}
\lambda  & 0 & 0 \\
0 & 1 & 0 \\
0 & 0 & -\left( \lambda +1\right)
\end{pmatrix}
,\mu
\begin{pmatrix}
\lambda ^{\prime } & 0 & 0 \\
0 & 1 & 0 \\
0 & 0 & -\left( \lambda ^{\prime }+1\right)
\end{pmatrix}
.
\end{equation*}
So we have $\left\{ 1,\lambda ,-\left( \lambda +1\right) \right\} =\left\{
\mu ,\mu \lambda ^{\prime },-\mu \left( \lambda ^{\prime }+1\right) \right\}
$ and hence, $\lambda ^{\prime }\in \left\{ \lambda ,-\left( \lambda
+1\right) ,\frac{1}{\lambda },-\frac{\lambda +1}{\lambda },-\frac{1}{\lambda
+1},-\frac{\lambda }{\lambda +1}\right\} $. Conversely, if $\lambda ^{\prime
}\in \left\{ \lambda ,-\left( \lambda +1\right) ,\frac{1}{\lambda },-\frac{%
\lambda +1}{\lambda },-\frac{1}{\lambda +1},-\frac{\lambda }{\lambda +1}
\right\} $, we have the following isomorphisms:%
\begin{eqnarray*}
\sigma _{1} &:&\mathcal{T}_{4,6}^{\lambda }\longrightarrow \mathcal{T}_{4,6}^{\lambda }:\sigma _{1}\left( e_{1}\right) =e_{1},\sigma _{1}\left( e_{2}\right)
=e_{2},\sigma _{1}\left( e_{3}\right) =e_{3},\sigma _{1}\left( e_{4}\right)
=e_{4}, \\
\sigma _{2} &:&\mathcal{T}_{4,6}^{\lambda }\longrightarrow \mathcal{T}
_{4,6}^{-\left( \lambda +1\right) }:\sigma _{2}\left( e_{1}\right)
=e_{3},\sigma _{2}\left( e_{2}\right) =e_{2},\sigma _{2}\left( e_{3}\right)
=e_{1},\sigma _{2}\left( e_{4}\right) =-e_{4}, \\
\sigma _{3} &:&\mathcal{T}_{4,6}^{\lambda }\longrightarrow \mathcal{T}
_{4,6}^{\frac{1}{\lambda }}:\sigma _{3}\left( e_{1}\right) =e_{2},\sigma
_{3}\left( e_{2}\right) =e_{1},\sigma _{3}\left( e_{3}\right) =e_{3},\sigma
_{1}\left( e_{4}\right) =-\frac{1}{\lambda }e_{4}, \\
\sigma _{4} &:&\mathcal{T}_{4,6}^{\lambda }\longrightarrow \mathcal{T}
_{4,6}^{-\frac{\lambda +1}{\lambda }}:\sigma _{4}\left( e_{1}\right)
=e_{2},\sigma _{4}\left( e_{2}\right) =e_{3},\sigma _{4}\left( e_{3}\right)
=e_{1},\sigma _{4}\left( e_{4}\right) =\frac{1}{\lambda }e_{4}, \\
\sigma _{5} &:&\mathcal{T}_{4,6}^{\lambda }\longrightarrow \mathcal{T}
_{4,6}^{-\frac{1}{\lambda +1}}:\sigma _{5}\left( e_{1}\right) =e_{3},\sigma
_{5}\left( e_{2}\right) =e_{1},\sigma _{5}\left( e_{3}\right) =e_{2},\sigma
_{5}\left( e_{4}\right) =-\frac{1}{\lambda +1}e_{4}, \\
\sigma _{6} &:&\mathcal{T}_{4,6}^{\lambda }\longrightarrow \mathcal{T}
_{4,6}^{-\frac{\lambda }{\lambda +1}}:\sigma _{6}\left( e_{1}\right)
=e_{1},\sigma _{6}\left( e_{2}\right) =e_{3},\sigma _{6}\left( e_{3}\right)
=e_{2},\sigma _{6}\left( e_{4}\right) =\frac{1}{\lambda +1}e_{4}.
\end{eqnarray*}

Moreover, from here we have that $\mathcal{T}_{4,6}^{0} \cong \mathcal{T}_{4,6}^{-1}$.

Now, for any non-zero elements $\alpha ,\beta ,\gamma \in \mathbb{F}$, we define $%
\kappa \left( \alpha ,\beta ,\gamma \right) =\frac{\left( \alpha ^{2}+\beta
^{2}+\gamma ^{2}\right) ^{3}}{8\alpha ^{2}\beta ^{2}\gamma ^{2}}$. If $%
\left\{ \alpha ,\beta ,\gamma \right\} =\left\{ \mu \alpha ^{\prime },\mu
\beta ^{\prime },\mu \gamma ^{\prime }\right\} $, then $\kappa \left( \alpha
,\beta ,\gamma \right) =\kappa \left( \alpha ^{\prime },\beta ^{\prime
},\gamma ^{\prime }\right) $. Thus,%
\begin{equation*}
\xi \left( \lambda \right) =\kappa \left( 1,\lambda ,-\left( \lambda
+1\right) \right) =\frac{\left( \lambda ^{2}+\lambda +1\right) ^{3}}{\lambda
^{2}\left( \lambda +1\right) ^{2}}:\lambda \neq -1,0
\end{equation*}
is an invariant for $\mathcal{T}_{4,6}^{\lambda }$. If $\xi \left( \lambda
\right) =\xi \left( \lambda ^{\prime }\right) $, then $\lambda ^{\prime }\in
\left\{ \lambda ,-\frac{1}{\lambda +1},\frac{1}{\lambda },-\lambda -1,-\frac{%
\lambda }{\lambda +1},-\frac{1}{\lambda }\left( \lambda +1\right) \right\} $%
. Hence, $\mathcal{T}_{4,6}^{\lambda }\cong \mathcal{T}_{4,6}^{\lambda
^{\prime }}$ if and only if $\allowbreak \xi \left( \lambda \right) =\xi \left( \lambda
^{\prime }\right) $. 
\end{itemize}

Now, let us assume that $\mathcal{T}$
is a $1$-dimensional annihilator extension of $\mathcal{T}_{3,2}$. In this case, we have%
\begin{equation*}
Z^{3}\left( \mathcal{T}_{3,2},{\mathbb{F}}\right) =\left\langle
\Delta _{1,2,1},\Delta _{1,2,2},\Delta _{1,3,1},\Delta _{1,2,3}+\Delta
_{1,3,2}\right\rangle
\end{equation*}
and $B^{3}\left( \mathcal{T}_{3,2},{\mathbb{F}}\right)
=\left\langle \Delta _{1,2,1}\right\rangle $. Therefore, $H^{3}\left(
\mathcal{T}_{3,2},{\mathbb{F}}\right) =\left\langle \left[ \Delta _{1,2,2}
\right] ,\left[ \Delta _{1,3,1}\right] ,\left[ \Delta _{1,2,3}\right] +\left[
\Delta _{1,3,2}\right] \right\rangle $. Moreover, by Corollary \ref{Aut}, the
automorphism group $\mathrm{Aut}\left( \mathcal{T}_{3,2}\right) $ consists
of invertible matrices of the form%
\begin{equation*}
\phi =%
\begin{pmatrix}
a_{11} & 0 & 0 \\
a_{21} & a_{22} & 0 \\
a_{31} & a_{32} & a_{11}^{2}a_{22}
\end{pmatrix}
.
\end{equation*}
Choose an arbitrary subspace $\mathbb{W}\in \mathcal{T}_{1}\left( \mathcal{T}
_{3,2}\right) $. Then $\mathbb{W}$ is spanned by%
\begin{equation*}
\left[ \theta \right] =\alpha _{1,3,1}\left[ \Delta _{1,3,1}\right] +\alpha
_{1,2,2}\left[ \Delta _{1,2,2}\right] +\alpha _{1,2,3}\left( \left[ \Delta
_{1,2,3}\right] +\left[ \Delta _{1,3,2}\right] \right)
\end{equation*}
such that $\mathrm{Rad}\left( \theta \right) \cap \left\langle
e_{3}\right\rangle =0$. Let $\phi =\big(a_{ij}\big)\in $ $\mathrm{Aut}\left(
\mathcal{T}_{3,2}\right) $ and write $\phi \theta =\beta _{1,3,1}\left[
\Delta _{1,3,1}\right] +\beta _{1,2,2}\left[ \Delta _{1,2,2}\right] +\beta
_{1,2,3}\left( \left[ \Delta _{1,2,3}\right] +\left[ \Delta _{1,3,2}\right]
\right) $. Then%
\begin{eqnarray*}
\beta _{1,3,1} &=&a_{11}^{3}a_{22}\left( a_{11}\alpha _{1,3,1}+a_{21}\alpha
_{1,2,3}\right) , \\
\beta _{1,2,2} &=&a_{11}a_{22}\left( a_{22}\alpha _{1,2,2}+2a_{32}\alpha
_{1,2,3}\right) , \\
\beta _{1,2,3} &=&\det \left( \phi \right) \alpha _{1,2,3}.
\end{eqnarray*}
Since $\beta _{1,2,3}\neq 0$ if and only if $\alpha _{1,2,3}\neq 0$, we have \emph{$%
\mathcal{O}$}$\left\langle \left[ \theta \right] :\alpha _{1,2,3}\neq
0\right\rangle \cap $\emph{$\mathcal{O}$}$\left\langle \left[ \theta \right]
:\alpha _{1,2,3}=0\right\rangle =\emptyset $ and therefore $\mathrm{Aut}
\left( \mathcal{T}_{3,2}\right) $ has at least two orbits on $\mathcal{T}
_{1}\left( \mathcal{T}_{3,2}\right) $. We distinguish two cases:

\begin{itemize}
\item $\alpha _{1,2,3}\neq 0$. So we may assume $\alpha _{1,2,3}=1$. Let $%
\phi $ be the following automorphism:%
\begin{equation*}
\phi =%
\begin{pmatrix}
1 & 0 & 0 \\
-\alpha _{1,3,1} & 1 & 0 \\
0 & -\frac{1}{2}\alpha _{1,2,2} & 1%
\end{pmatrix}
.
\end{equation*}
Then $\phi \mathbb{W=}\left\langle \left[ \Delta _{1,2,3}\right] +\left[
\Delta _{1,3,2}\right] \right\rangle $. So we get the Lie triple system $\mathcal{T}
_{4,7}$.

\item $\alpha _{1,2,3}=0$. Then $\mathrm{Rad}\left( \theta \right) \cap
\left\langle e_{3}\right\rangle =0$ implies that $\alpha _{1,3,1}\neq 0$ and
so we may assume $\alpha _{1,3,1}=1$. From here, we have%
\begin{equation*}
\beta _{1,3,1}=a_{11}^{4}a_{22},\beta _{1,2,2}=a_{11}a_{22}^{2}\alpha
_{1,2,2},\beta _{1,2,3}=0.
\end{equation*}
This in turn shows that if $\alpha _{1,2,3}=0$ then \emph{$\mathcal{O}$}$%
\left\langle \left[ \theta \right] :\alpha _{1,2,2}\neq 0\right\rangle \cap $%
\emph{$\mathcal{O}$}$\left\langle \left[ \theta \right] :\alpha
_{1,2,2}=0\right\rangle =\emptyset $. We distinguish two subcases:

\begin{itemize}
\item $\mu =\alpha _{1,2,2}\neq 0$. Let $\phi $ be the following
automorphism:%
\begin{equation*}
\phi =%
\begin{pmatrix}
\mu  & 0 & 0 \\
0 & \mu ^{2} & 0 \\
0 & 0 & \mu ^{4}
\end{pmatrix}
.
\end{equation*}
Then $\left[ \phi \theta \right] =\mu ^{6}\left( \left[ \Delta _{1,3,1}
\right] +\left[ \Delta _{1,2,2}\right] \right) $. Hence, $\phi \mathbb{W=}
\left\langle \left[ \Delta _{1,3,1}\right] +\left[ \Delta _{1,2,2}\right]
\right\rangle $ and we get the Lie triple system $\mathcal{T}_{4,8}$.

\item $\alpha _{1,2,2}=0$. Then $\mathbb{W=}\left\langle \left[ \Delta
_{1,3,1}\right] \right\rangle $ and we get the Lie triple system $\mathcal{T}_{4,9}$.

\end{itemize}
\end{itemize}

\end{proof}

\section{The geometric classification of nilpotent Lie triple systems up to dimension four}

Given a vector space ${\mathbb V}$ of dimension $n$, the set of trilinear maps $\textrm{Tri}({\mathbb V} , {\mathbb V}) \cong \textrm{Hom}({\mathbb V} ^{\otimes3}, {\mathbb V})\cong ({\mathbb V}^*)^{\otimes3} \otimes {\mathbb V}$ is a vector space of dimension $n^4$. This vector space has the structure of the affine space ${\mathbb F}^{n^4}$ in the following sense:
fixed a basis $e_1, \ldots, e_n$ of ${\mathbb V}$, then any triple system with multiplication $\mu \in \textrm{Tri}({\mathbb V} , {\mathbb V})$, is determined by some parameters $c_{ijk}^p \in {\mathbb F}$,  called {\it structural constants},  such that
$$\mu(e_i, e_j, e_k) = \sum_{p=1}^n c_{ijk}^p e_p$$
which corresponds to a point in the affine space ${\mathbb F}^{n^4}$. Then a set of triple systems $\mathcal S$ corresponds to an algebraic variety, i.e., a Zariski closed set, if there are some polynomial equations in variables $c_{ijk}^p$ with zero locus equal to the set of structural constants of the triple systems in $\mathcal S$. Since given the identities defining Lie triple systems we can obtain a set of polynomial equations in variables $c_{ijk}^p$, the class of $n$-dimensional Lie triple systems is a variety. Moreover, the class of $n$-dimensional nilpotent Lie triple systems, denoted $\mathcal{T}_{n}$, is a subvariety of the variety of $n$-dimensional Lie triple systems.

Now, consider the action of $\textrm{GL}({\mathbb V})$ on $\mathcal{T}_{n}$ by conjugation:
$$(g*\mu)(x,y,z) = g \mu (g^{-1} x, g^{-1} y, g^{-1} z)$$
for $g\in\textrm{GL}({\mathbb V})$, $\mu\in \mathcal{T}_{n}$ and for any $x, y, z \in {\mathbb V}$. Observe that the $\textrm{GL}({\mathbb V})$-orbit of $\mu$, denoted $O(\mu)$, contains all the structural constants of the triple systems isomorphic to the Lie triple system with structural constants $\mu$.

In the previous section, we gave a decomposition of $\mathcal{T}_{n}$, for $n\leq 4$, into $\textrm{GL}(\mathbb V)$-orbits (acting by conjugation), i.e., an algebraic classification of the nilpotent Lie triple systems up to dimension four. In this section, we will describe the closures of orbits of $\mu\in{\mathcal{T}_{n}}$, denoted by $\overline{O(\mu)}$, and we will give a geometric classification of ${\mathcal{T}_{n}}$, which consist in describing its irreducible components.
Recall that any affine variety can be represented as a finite union of its irreducible components in a unique way.

Additionally, describing the irreducible components of a variety, such as ${\mathcal{T}_{n}}$, gives us the rigid triple systems of the variety, which are those triple systems with an open $\textrm{GL}(\mathbb V)$-orbit. This is due to the  fact that a triple system is rigid in variety if and only if the closure of its orbit is an irreducible component of the variety.

\begin{definition}
\rm Let $\mathcal  T $ and $\mathcal  T '$ be two $n$-dimensional Lie triple systems and $\mu, \lambda \in \mathcal{T}_{n}$ be their representatives in the affine space, respectively. We say $\mathcal  T $ {\it degenerates}  to $\mathcal  T '$, and write $\mathcal  T \to \mathcal  T '$, if $\lambda\in\overline{O(\mu)}$. If $\mathcal  T  \not\cong \mathcal  T '$, then we call it a  {\it proper degeneration}.

Conversely, if $\lambda\not\in\overline{O(\mu)}$ then we call it a 
{\it non-degeneration} and we write ${\mathcal  T }\not\to {\mathcal  T }'$.
\end{definition}

\noindent Note that the definition of a degeneration does not depend on the choice of $\mu$ and $\lambda$. Also, due to the transitivity of the notion of degeneration  (that is, if ${\mathcal  T }\to {\mathcal  T }''$ and ${\mathcal  T }''\to {\mathcal  T }'$ then ${\mathcal  T }\to{\mathcal  T }'$) we have the following definitions.

\begin{definition} \rm
Let $\mathcal  T $ and $\mathcal  T '$ be two $n$-dimensional Lie triple systems such that ${\mathcal  T } \to {\mathcal  T }'$. If there is no ${\mathcal  T }''$ such that ${\mathcal  T }\to {\mathcal  T }''$ and ${\mathcal  T }''\to {\mathcal  T }'$ are proper degenerations, then ${\mathcal  T }\to {\mathcal  T }'$ is called a  {\it primary degeneration}. Analogously, let $\mathcal  T $ and $\mathcal  T '$ be two $n$-dimensional Lie triple systems such that $\mathcal  T \not\to \mathcal  T '$, if there are no ${\mathcal  T }''$ and ${\mathcal  T }'''$ such that ${\mathcal  T }''\to {\mathcal  T }$, ${\mathcal  T }'\to {\mathcal  T }'''$, ${\mathcal  T }''\not\to {\mathcal  T }'''$ and one of the assertions ${\mathcal  T }''\to {\mathcal  T }$ and ${\mathcal  T }'\to {\mathcal  T }'''$ is a proper degeneration,  then ${\mathcal  T } \not\to {\mathcal  T }'$ is called a  {\it primary non-degeneration}.
\end{definition}

Note that it suffices to prove primary degenerations and non-degenerations to fully describe the geometry of a variety. Therefore, in this work we will focus on proving the primary degenerations and non-degenerations, and for this we will use the results applied to Lie algebras in \cite{BC99,GRH,GRH2,S90}. These results are summarized below.

Firstly, since $\mathrm{dim}\,O(\mu) = n^2 - \mathrm{dim}\,\mathfrak{Der}(\mu)$, then if $ \mathcal  T \to  \mathcal  T '$ and  $\mathcal  T \not\cong  \mathcal  T '$, we have that $\mathrm{dim}\,\mathfrak{Der}( \mathcal  T )<\mathrm{dim}\,\mathfrak{Der}( \mathcal  T ')$, where $\mathfrak{Der}( \mathcal  T )$ denotes the Lie algebra of derivations of  $\mathcal  T $. Therefore, we will check the assertion ${\mathcal  T }\to {\mathcal  T }'$ only for ${\mathcal  T }$ and ${\mathcal  T }'$ such that $\mathrm{dim}\,\mathfrak{Der}({\mathcal  T })<\mathrm{dim}\,\mathfrak{Der}({\mathcal  T }')$.

Secondly, to prove primary degenerations, let ${\mathcal  T }$ and ${\mathcal  T }'$ be two Lie triple systems represented by the structures $\mu$ and $\lambda$ from ${\mathcal  T }_n$, respectively. Let $c_{ijk}^p$ be the structure constants of $\lambda$ in a basis $e_1,\dots, e_n$ of ${\mathbb V}$. If there exist $n^2$ maps $a_i^j(t): \mathbb{F}^*\to \mathbb{F}$ such that $E_i(t)=\sum_{j=1}^na_i^j(t)e_j$ ($1\leq i \leq n$) form a basis of ${\mathbb V}$ for any $t\in\mathbb{F}^*$ and the structure constants $c_{ijk}^p(t)$ of $\mu$ in the basis $E_1(t),\dots, E_n(t)$ satisfy $\lim\limits_{t\to 0}c_{ijk}^p(t)=c_{ijk}^p$, then ${\mathcal  T }\to {\mathcal  T }'$. In this case,  $E_1(t),\dots, E_n(t)$ is called a parametrized basis for ${\mathcal  T }\to {\mathcal  T }'$.

Thirdly, to prove primary non-degenerations we will use the following lemma (see \cite{GRH}).

\begin{lemma}\label{main1}
Let $\mathcal{B}$ be a Borel subgroup of ${\rm GL}({\mathbb V})$ and $\mathcal{R}\subset {\mathcal  T }_n$ be a $\mathcal{B}$-stable closed subset.
If ${\mathcal  T } \to {\mathcal  T }'$ and ${\mathcal  T }$ can be represented by $\mu\in\mathcal{R}$ then there is $\lambda\in \mathcal{R}$ that represents ${\mathcal  T }'$.
\end{lemma}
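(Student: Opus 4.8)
The plan is to deduce the statement from the classical identity
$\overline{O(\mu)}=\mathrm{GL}(\mathbb{V})\cdot\overline{\mathcal{B}\cdot\mu}$,
which holds for the action of any connected linear algebraic group and any of its Borel subgroups. Writing $G=\mathrm{GL}(\mathbb{V})$, which is connected (it is the nonvanishing locus of the determinant in affine space), I would first reduce the lemma to this identity and then establish the identity using the completeness of the flag variety $G/\mathcal{B}$.

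For the reduction, let $\mu\in\mathcal{R}$ represent $\mathcal{T}$ and fix any representative $\lambda_0\in\mathcal{T}_n$ of $\mathcal{T}'$. Since the notion of degeneration is independent of the chosen representatives, $\mathcal{T}\to\mathcal{T}'$ gives $\lambda_0\in\overline{O(\mu)}$. Granting the identity, $\lambda_0=g\cdot\nu$ for some $g\in G$ and some $\nu\in\overline{\mathcal{B}\cdot\mu}$. Because $\mathcal{R}$ is $\mathcal{B}$-stable we have $\mathcal{B}\cdot\mu\subseteq\mathcal{R}$, and because $\mathcal{R}$ is closed, passing to closures yields $\overline{\mathcal{B}\cdot\mu}\subseteq\mathcal{R}$; hence $\nu\in\mathcal{R}$. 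As $\lambda_0=g\cdot\nu$, the point $\nu$ lies in the same $G$-orbit as $\lambda_0$, so $\nu$ represents $\mathcal{T}'$. Setting $\lambda:=\nu$ then furnishes the required element of $\mathcal{R}$.

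To prove the identity, the inclusion $\supseteq$ is immediate: $\overline{\mathcal{B}\cdot\mu}\subseteq\overline{O(\mu)}$, and $\overline{O(\mu)}$ is $G$-stable (the closure of a $G$-orbit under a connected group is $G$-stable), so $G\cdot\overline{\mathcal{B}\cdot\mu}\subseteq\overline{O(\mu)}$. For $\subseteq$ it suffices to show $G\cdot\overline{\mathcal{B}\cdot\mu}$ is closed, since it obviously contains $O(\mu)$. Now $\overline{\mathcal{B}\cdot\mu}$ is a closed $\mathcal{B}$-stable subset, so $G\times_{\mathcal{B}}\overline{\mathcal{B}\cdot\mu}$ is a closed subvariety of the associated bundle $G\times_{\mathcal{B}}\mathcal{T}_n$, which is trivialized as $(G/\mathcal{B})\times\mathcal{T}_n$ via $[g,y]\mapsto(g\mathcal{B},\,g\cdot y)$. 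Under this trivialization, $G\cdot\overline{\mathcal{B}\cdot\mu}$ is exactly the image of that closed subvariety under the projection $(G/\mathcal{B})\times\mathcal{T}_n\to\mathcal{T}_n$. Since $G/\mathcal{B}$ is a complete (projective) variety, this projection is a closed morphism, and therefore the image is closed.

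The main obstacle is precisely this final closedness step, i.e. the passage from the Borel-orbit closure to the full orbit closure; the rest of the argument is formal bookkeeping with $\mathcal{B}$-stability and closedness of $\mathcal{R}$. The essential input is the completeness of the flag variety $G/\mathcal{B}$, which makes the projection off the $G/\mathcal{B}$-factor a closed map and thereby forces the $\mathcal{B}$-invariant closed set $\mathcal{R}$ to already contain a representative of every degeneration of $\mathcal{T}$.
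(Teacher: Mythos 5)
Your proof is correct. The paper gives no proof of this lemma at all --- it is quoted from Grunewald--O'Halloran (see \cite{GRH,GRH2}) --- and your argument, reducing the claim to the identity $\overline{O(\mu)}=\mathrm{GL}({\mathbb V})\cdot\overline{\mathcal{B}\cdot\mu}$ and establishing the closedness of $\mathrm{GL}({\mathbb V})\cdot\overline{\mathcal{B}\cdot\mu}$ via the associated bundle $G\times_{\mathcal{B}}\overline{\mathcal{B}\cdot\mu}$ and the completeness of $G/\mathcal{B}$, is exactly the standard argument underlying that citation, with the bookkeeping ($\mathcal{B}\cdot\mu\subseteq\mathcal{R}$, hence $\overline{\mathcal{B}\cdot\mu}\subseteq\mathcal{R}$ by closedness, and $\nu=g^{-1}\lambda_0$ representing $\mathcal{T}'$) carried out correctly.
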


\noindent In particular, it follows:

\begin{corollary}\label{cor:degs}
Let ${\mathcal  T }\to {\mathcal  T }'$ be a proper degeneration, then
\begin{enumerate}
    \item $\mathrm{dim}\, \mathrm{Ann}({\mathcal  T })\leq\mathrm{dim}\,\mathrm{Ann}({\mathcal  T }')$, 
    \item $\mathrm{dim}\,\left[{\mathcal  T }, {\mathcal  T }, {\mathcal  T } \right]\geq \mathrm{dim}\,\left[{\mathcal  T }' ,{\mathcal  T }', {\mathcal  T }'\right]$.
\end{enumerate}
\end{corollary}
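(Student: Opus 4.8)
The plan is to derive both inequalities from Lemma \ref{main1} by producing, in each case, a suitable $\mathrm{GL}(\mathbb{V})$-stable Zariski-closed subset $\mathcal{R}\subset\mathcal{T}_n$ that contains a representative of $\mathcal{T}$ and whose defining conditions force the desired bound on every representative of $\mathcal{T}'$. Since a Borel subgroup $\mathcal{B}$ is contained in $\mathrm{GL}(\mathbb{V})$, any $\mathrm{GL}(\mathbb{V})$-stable set is in particular $\mathcal{B}$-stable, so such an $\mathcal{R}$ meets the hypotheses of Lemma \ref{main1}. The only genuinely technical point will be to verify that the prescribed bounds on $\dim\mathrm{Ann}$ and on $\dim[\,\cdot\,,\cdot\,,\cdot\,]$ are \emph{closed} (polynomial) conditions on the structure constants $c_{ijk}^p$; once that semicontinuity is established, stability is automatic from invariance under isomorphism and the conclusion is a one-line application of the lemma.

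For assertion $(1)$, I would set $k:=\mathrm{dim}\,\mathrm{Ann}(\mathcal{T})$ and consider
\[
\mathcal{R}_1:=\left\{\mu\in\mathcal{T}_n:\mathrm{dim}\,\mathrm{Ann}(\mu)\geq k\right\}.
\]
First I would check that $\mathcal{R}_1$ is Zariski closed: for a general $x=\sum_i x_i e_i$, the conditions $\mu(x,e_j,e_k)=0$ for all $j,k$ form a homogeneous linear system in the $x_i$ whose coefficient matrix $L_\mu$ has entries linear in the $c_{ijk}^p$, and since $\mathrm{dim}\,\mathrm{Ann}(\mu)=n-\mathrm{rank}(L_\mu)$, the condition $\mathrm{dim}\,\mathrm{Ann}(\mu)\geq k$ is equivalent to $\mathrm{rank}(L_\mu)\leq n-k$, i.e.\ to the vanishing of all $(n-k+1)$-minors of $L_\mu$. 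Next, $\mathcal{R}_1$ is $\mathrm{GL}(\mathbb{V})$-stable because $\mathrm{dim}\,\mathrm{Ann}$ is an isomorphism invariant and the action is by conjugation. Since $\mathcal{T}$ is represented by some $\mu\in\mathcal{R}_1$ and $\mathcal{T}\to\mathcal{T}'$, Lemma \ref{main1} (with any Borel $\mathcal{B}$) yields a $\lambda\in\mathcal{R}_1$ representing $\mathcal{T}'$, whence $\mathrm{dim}\,\mathrm{Ann}(\mathcal{T}')\geq k=\mathrm{dim}\,\mathrm{Ann}(\mathcal{T})$.

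For assertion $(2)$ the argument is dual: set $d:=\mathrm{dim}\,[\mathcal{T},\mathcal{T},\mathcal{T}]$ and take
\[
\mathcal{R}_2:=\left\{\mu\in\mathcal{T}_n:\mathrm{dim}\,[\mu,\mu,\mu]\leq d\right\}.
\]
Here $[\mu,\mu,\mu]$ is the span of the vectors $\mu(e_i,e_j,e_k)$, so its dimension equals the rank of the matrix $M_\mu$ whose columns are the coordinate vectors of these products (entries $c_{ijk}^p$); thus $\mathrm{rank}(M_\mu)\leq d$ is closed, being the vanishing of all $(d+1)$-minors, and $\mathrm{dim}\,[\,\cdot\,,\cdot\,,\cdot\,]$ is an isomorphism invariant, so $\mathcal{R}_2$ is closed and $\mathrm{GL}(\mathbb{V})$-stable. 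Applying Lemma \ref{main1} exactly as above produces a representative $\lambda\in\mathcal{R}_2$ of $\mathcal{T}'$, giving $\mathrm{dim}\,[\mathcal{T}',\mathcal{T}',\mathcal{T}']\leq d=\mathrm{dim}\,[\mathcal{T},\mathcal{T},\mathcal{T}]$. The hard part, as noted, is purely the bookkeeping of $L_\mu$ and $M_\mu$ to confirm closedness; I expect no further obstacle, and the properness of the degeneration is not even needed for the non-strict inequalities.
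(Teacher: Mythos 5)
Your proof is correct and is precisely the argument the paper intends: the corollary is presented as an immediate consequence of Lemma \ref{main1}, obtained by applying it to the closed, $\mathrm{GL}({\mathbb V})$-stable (hence Borel-stable) subsets cut out by the vanishing of the appropriate minors, i.e.\ $\mathrm{rank}(L_\mu)\leq n-k$ for the annihilator and $\mathrm{rank}(M_\mu)\leq d$ for the derived subsystem. Your explicit bookkeeping of closedness and your observation that properness is not actually needed for the non-strict inequalities merely spell out details the paper leaves implicit.
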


Defining $\mathcal{R}$ by a set of polynomial equations in variables $c_{ijk}^p$ and in the conditions of the previous result, such that $\mu\in\mathcal{R}$ and $O(\lambda)\cap \mathcal{R}=\emptyset$, give us the non-degeneration ${\mathcal  T }\not\to {\mathcal  T }'$. In this case, we call $\mathcal{R}$ a separating set for ${\mathcal  T }\not\to {\mathcal  T }'$.
To prove non-degenerations, we will present the corresponding separating set and we will omit the verification of the fact that $\mathcal{R}$ is stable under the action of the Borel subgroup of lower triangular matrices and of the fact that $O(\lambda)\cap \mathcal{R}=\emptyset$, which can be obtained by straightforward calculations.

Finally, if the algebraic classification of the class under consideration is finite, then the graph of primary degenerations gives the whole geometric classification: the description of irreducible components can be easily obtained. This is the case of the variety $\mathcal{T}_{3}$ of $3$-dimensional nilpotent Lie triple systems. However, the variety $\mathcal{T}_{4}$ of $4$-dimensional nilpotent Lie triple systems contains infinitely many non-isomorphic triple systems, since we have the family $\mathcal{T}_{4,6}^{*}$. Therefore, we have to fulfil some additional work.

\begin{definition}
\rm
Let ${\mathcal  T }(*)=\{{\mathcal  T }(\alpha): {\alpha\in I}\}$ be a family of $n$-dimensional Lie triple systems and let ${\mathcal  T }'$ be another Lie triple system. Suppose that ${\mathcal  T }(\alpha)$ is represented by the structure $\mu(\alpha)\in{\mathcal  T }_n$ for $\alpha\in I$ and ${\mathcal  T }'$ is represented by the structure $\lambda\in{\mathcal  T }_n$. We say the family ${\mathcal  T }(*)$ {\it degenerates}   to ${\mathcal  T }'$, and write ${\mathcal  T }(*)\to {\mathcal  T }'$, if $\lambda\in\overline{\{O(\mu(\alpha))\}_{\alpha\in I}}$.

Conversely, if $\lambda\not\in\overline{\{O(\mu(\alpha))\}_{\alpha\in I}}$ then we call it a  {\it non-degeneration}, and we write ${\mathcal  T }(*)\not\to {\mathcal  T }'$.

\end{definition}

To prove ${\mathcal  T }(*)\to {\mathcal  T }'$, suppose that ${\mathcal  T }(\alpha)$ is represented by the structure $\mu(\alpha)\in{\mathcal  T }_n$ for $\alpha\in I$ and ${\mathcal  T }'$ is represented by the structure $\lambda\in{\mathcal  T }_n$. Let $c_{ijk}^p$ be the structure constants of $\lambda$ in a basis  $e_1,\dots, e_n$ of ${\mathbb V}$. If there is a pair of maps $(f, (a_i^j))$, where $f:\mathbb{F}^*\to I$ and $a_i^j:\mathbb{F}^*\to \mathbb{F}$ are such that $E_i(t)=\sum_{j=1}^na_i^j(t)e_j$ ($1\le i\le n$) form a basis of ${\mathbb V}$ for any  $t\in\mathbb{F}^*$ and the structure constants $c_{ijk}^p(t)$ of $\mu\big(f(t)\big)$ in the basis $E_1(t),\dots, E_n(t)$ satisfy $\lim\limits_{t\to 0}c_{ijk}^p(t)=c_{ijk}^p$, then ${\mathcal  T }(*)\to {\mathcal  T }'$. In this case  $E_1(t),\dots, E_n(t)$ and $f(t)$ are called a parametrized basis and a parametrized index for ${\mathcal  T }(*)\to {\mathcal  T }'$, respectively.

To prove ${\mathcal  T }(*)\not \to {\mathcal  T }'$, we will use an analogue of the Lemma \ref{main1} for families of triple systems.

\begin{lemma}\label{main2}
Let $\mathcal{B}$ be a Borel subgroup of ${\rm GL}({\mathbb V})$ and $\mathcal{R}\subset {\mathcal  T }_n$ be a $\mathcal{B}$-stable closed subset.
If ${\mathcal  T }(*) \to {\mathcal  T }'$ and ${\mathcal  T }(\alpha)$ can be represented by $\mu(\alpha)\in\mathcal{R}$ for $\alpha\in I$, then there is $\lambda\in \mathcal{R}$ that represents ${\mathcal  T }'$.
\end{lemma}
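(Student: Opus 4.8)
The plan is to mimic the proof of Lemma \ref{main1}, replacing the single orbit by the union over the family, so that everything reduces to one closedness statement governed by the Borel hypothesis. Write $G=\mathrm{GL}(\mathbb{V})$ and recall that $\mathcal{T}(*)\to\mathcal{T}'$ means the representative $\lambda$ of $\mathcal{T}'$ lies in $\overline{\bigcup_{\alpha\in I}O(\mu(\alpha))}$. First I would set $Y:=\bigcup_{\alpha\in I}\mathcal{B}\cdot\mu(\alpha)$. Since each $\mu(\alpha)$ lies in the $\mathcal{B}$-stable set $\mathcal{R}$, we have $Y\subseteq\mathcal{R}$, and because $\mathcal{B}\subseteq G$ one has $G\cdot Y=\bigcup_{\alpha\in I}O(\mu(\alpha))$. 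The closure $\overline{Y}$ is again $\mathcal{B}$-stable, because each $b\in\mathcal{B}$ acts as a homeomorphism of $\mathcal{T}_n$ and therefore preserves the closure of any $\mathcal{B}$-stable set, and $\overline{Y}\subseteq\mathcal{R}$ since $\mathcal{R}$ is closed.

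The crucial step is to show that $G\cdot\overline{Y}$ is closed, and this is exactly where the Borel hypothesis enters. Since $\mathcal{B}$ is a Borel subgroup, the homogeneous space $G/\mathcal{B}$ is a projective, hence complete, flag variety. For any $\mathcal{B}$-stable closed subset $Z\subseteq\mathcal{T}_n$, the saturation $G\cdot Z$ is the image of the associated fibre bundle $G\times_{\mathcal{B}}Z$ under the action morphism; via the isomorphism $G\times_{\mathcal{B}}\mathcal{T}_n\cong(G/\mathcal{B})\times\mathcal{T}_n$ sending $[g,v]\mapsto(g\mathcal{B},g\cdot v)$, the set $G\times_{\mathcal{B}}Z$ becomes a closed subvariety, and projecting along the complete factor $G/\mathcal{B}$ is a proper, hence closed, map. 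Thus $G\cdot Z$ is closed. Applying this with $Z=\overline{Y}$ yields that $G\cdot\overline{Y}$ is closed.

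Now I would conclude as follows. Since $G\cdot Y\subseteq G\cdot\overline{Y}$ and the latter is closed, we obtain $\overline{\bigcup_{\alpha\in I}O(\mu(\alpha))}=\overline{G\cdot Y}\subseteq G\cdot\overline{Y}$. Hence $\lambda\in G\cdot\overline{Y}$, so $\lambda=g\cdot z$ for some $g\in G$ and $z\in\overline{Y}\subseteq\mathcal{R}$. Then $z=g^{-1}\cdot\lambda\in O(\lambda)$ represents $\mathcal{T}'$ and lies in $\mathcal{R}$, which is precisely the element whose existence the statement asserts. The only genuinely nontrivial ingredient is the closedness of $G\cdot\overline{Y}$; everything else is bookkeeping. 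That closedness is the same completeness-of-$G/\mathcal{B}$ argument already underlying Lemma \ref{main1} (cf.\ \cite{GRH}), so in practice the proof can simply be recorded as identical to that of Lemma \ref{main1}, with the single set $\mathcal{B}\cdot\mu$ replaced throughout by $\bigcup_{\alpha\in I}\mathcal{B}\cdot\mu(\alpha)$.
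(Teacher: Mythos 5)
Your proposal is correct and is essentially the paper's own (implicit) argument: the paper states Lemma \ref{main2} without proof, as the analogue of Lemma \ref{main1} taken from \cite{GRH}, and that argument is precisely the one you spell out — replace the single set $\mathcal{B}\cdot\mu$ by $Y=\bigcup_{\alpha\in I}\mathcal{B}\cdot\mu(\alpha)$, note that $\overline{Y}$ is a $\mathcal{B}$-stable closed subset of $\mathcal{R}$, and use completeness of $G/\mathcal{B}$ (via $G\times_{\mathcal{B}}\overline{Y}$ closed in $(G/\mathcal{B})\times\mathcal{T}_n$ and properness of the projection) to conclude that $G\cdot\overline{Y}$ is closed, hence contains $\lambda$, so that $g^{-1}\cdot\lambda\in\mathcal{R}$ represents $\mathcal{T}'$. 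All the steps you record (stability of $\overline{Y}$ under homeomorphisms from $\mathcal{B}$, $G\cdot Y=\bigcup_{\alpha}O(\mu(\alpha))$, and the saturation argument) are sound, so nothing is missing.
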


Similarly, constructing a set $\mathcal{R}$ in the conditions of the previous result, such that $\mu(\alpha)\in\mathcal{R}$ for any $\alpha\in I$ and $O(\lambda)\cap \mathcal{R}=\varnothing$, gives us the non-degeneration ${\mathcal  T }(*)\not\to {\mathcal  T }'$.

\subsection{Geometric classification of 3-dimensional nilpotent Lie triple systems}

Since any triple system degenerates to the trivial triple system, we have the following result.

\begin{theorem}
The variety of $3$-dimensional nilpotent Lie triple systems has only one irreducible component, corresponding to the rigid Lie triple system $\mathcal{T}_{3,2}$. Moreover, $\mathrm{dim}\, O(\mathcal{T}_{3,2})=4$.
\end{theorem}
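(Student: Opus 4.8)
The plan is to combine the algebraic classification of $\mathcal{T}_3$ with the two tools recalled above: the orbit-dimension formula $\mathrm{dim}\,O(\mu)=n^2-\mathrm{dim}\,\mathfrak{Der}(\mu)$ and the fact that a rigid system is precisely one whose orbit closure is an irreducible component. By the classification of $3$-dimensional nilpotent Lie triple systems, the variety $\mathcal{T}_3$ consists of exactly two $\mathrm{GL}(\mathbb{V})$-orbits, those of the abelian system $\mathcal{T}_{3,1}$ and of $\mathcal{T}_{3,2}$. Since $\mathcal{T}_3$ is closed, it equals $\overline{O(\mathcal{T}_{3,1})}\cup\overline{O(\mathcal{T}_{3,2})}$, so everything reduces to understanding the relation between these two closures.

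First I would record the only primary degeneration $\mathcal{T}_{3,2}\to\mathcal{T}_{3,1}$. This is immediate from the remark preceding the statement that every triple system degenerates to the trivial one; explicitly, in the parametrized basis $E_i(t)=te_i$ the structure constants of $\mathcal{T}_{3,2}$ get multiplied by $t^2$ and hence tend to $0$ as $t\to 0$. Therefore $O(\mathcal{T}_{3,1})\subseteq\overline{O(\mathcal{T}_{3,2})}$, and consequently $\mathcal{T}_3=\overline{O(\mathcal{T}_{3,2})}$.

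The irreducibility conclusion is then formal: $O(\mathcal{T}_{3,2})$ is the image of the irreducible group $\mathrm{GL}(\mathbb{V})$ under the orbit morphism $g\mapsto g\ast\mu$, hence irreducible, and so is its closure. Thus $\mathcal{T}_3=\overline{O(\mathcal{T}_{3,2})}$ is irreducible, so it has a single irreducible component, which is the closure of one orbit; equivalently, $\mathcal{T}_{3,2}$ is the unique rigid system of the variety.

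It remains to compute $\mathrm{dim}\,O(\mathcal{T}_{3,2})$, and this derivation count is the only step demanding actual work. Writing a derivation of $\mathcal{T}_{3,2}$ as a matrix $D=(a_{ij})$ and imposing $D[x,y,z]=[Dx,y,z]+[x,Dy,z]+[x,y,Dz]$ on basis triples, one uses that any derivation preserves $[\mathcal{T}_{3,2},\mathcal{T}_{3,2},\mathcal{T}_{3,2}]=\langle e_3\rangle$ to get $a_{13}=a_{23}=0$, while the defining relation $[e_1,e_2,e_1]=e_3$ together with the vanishing of $[e_1,e_2,e_2]$ forces $a_{12}=0$ and $a_{33}=2a_{11}+a_{22}$. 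I expect the delicate point to be verifying that the remaining (all vanishing) brackets impose no further constraints, so that $a_{11},a_{21},a_{22},a_{31},a_{32}$ stay free. This gives $\mathrm{dim}\,\mathfrak{Der}(\mathcal{T}_{3,2})=5$, hence $\mathrm{dim}\,O(\mathcal{T}_{3,2})=3^2-5=4$, completing the proof.
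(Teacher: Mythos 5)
Your argument is correct and is essentially the paper's own (the paper leaves the proof implicit, noting only that every triple system degenerates to the abelian one): the classification gives $\mathcal{T}_3 = O(\mathcal{T}_{3,1}) \cup O(\mathcal{T}_{3,2})$, the scaling $E_i(t)=te_i$ gives $\mathcal{T}_{3,2}\to\mathcal{T}_{3,1}$, and irreducibility of the orbit closure follows as you say. Your derivation count also checks out: the conditions on $D=(a_{ij})$ from the triples $(e_1,e_2,e_1)$ and $(e_1,e_2,e_2)$ are exactly $a_{13}=a_{23}=a_{12}=0$ and $a_{33}=2a_{11}+a_{22}$, all remaining triples impose nothing new, so $\mathrm{dim}\,\mathfrak{Der}(\mathcal{T}_{3,2})=5$ and $\mathrm{dim}\,O(\mathcal{T}_{3,2})=9-5=4$, as claimed.
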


\subsection{Geometric classification of 4-dimensional nilpotent Lie triple systems}

In Theorem \ref{classification4} we present the classification, up to isomorphism, of the nilpotent Lie triple systems of dimension $4$ over an
algebraically closed field $\mathbb{F}$ of characteristic different from two and three. To obtain the irreducible components of this variety, we have to study the primary degenerations and non-degenerations first.

\begin{lemma} \label{th:deg4nlts}
The graph of primary degenerations and non-degenerations for the variety of $4$-dimensional nilpotent Lie triple systems is given in Figure 1, where the numbers above are the dimensions of the corresponding orbits.
\end{lemma}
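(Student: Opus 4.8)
The plan is to assemble Figure~1 in two stages: first fix the vertices by computing the orbit dimension of every system in Theorem~\ref{classification4}, and then fix the edges by deciding, for each admissible ordered pair, whether a degeneration holds. Throughout I would use the three devices set up above, namely the dimension formula $\mathrm{dim}\,O(\mu)=16-\mathrm{dim}\,\mathfrak{Der}(\mu)$, the necessary conditions of Corollary~\ref{cor:degs}, and the explicit parametrized-basis and separating-set criteria.

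For the vertices, I would compute the derivation algebra $\mathfrak{Der}(\mathcal{T})$ of each of $\mathcal{T}_{4,1},\dots,\mathcal{T}_{4,9}$ and of the family $\mathcal{T}_{4,6}^{\lambda}$ directly from the defining brackets: a derivation is a linear map $D$ satisfying $D[x,y,z]=[Dx,y,z]+[x,Dy,z]+[x,y,Dz]$, which in a fixed basis becomes a homogeneous linear system in the sixteen entries of $D$. Solving these systems yields $\mathrm{dim}\,\mathfrak{Der}$ and hence, via the dimension formula, the orbit dimensions printed above the nodes; for the generic member of $\mathcal{T}_{4,6}^{\lambda}$ this number is constant in $\lambda$, so the whole family occupies a single vertex.

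For the edges, I would first prune the list of ordered pairs $(\mathcal{T},\mathcal{T}')$: a proper degeneration forces $\mathrm{dim}\,\mathfrak{Der}(\mathcal{T})<\mathrm{dim}\,\mathfrak{Der}(\mathcal{T}')$ together with $\mathrm{dim}\,\mathrm{Ann}(\mathcal{T})\le\mathrm{dim}\,\mathrm{Ann}(\mathcal{T}')$ and $\mathrm{dim}\,[\mathcal{T},\mathcal{T},\mathcal{T}]\ge\mathrm{dim}\,[\mathcal{T}',\mathcal{T}',\mathcal{T}']$ by Corollary~\ref{cor:degs}, which discards most candidates immediately. For each surviving candidate I would exhibit a parametrized basis $E_i(t)=\sum_j a_i^j(t)e_j$ and check that the structure constants of the source, re-expressed in this basis, converge as $t\to 0$ to those of the target; these give the positive edges. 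For each remaining pair I would instead produce a separating set $\mathcal{R}$, a closed subset cut out by polynomial equations in the $c_{ijk}^p$, stable under the Borel subgroup of lower-triangular matrices, containing a representative of $\mathcal{T}$ but meeting $O(\mathcal{T}')$ trivially, so that Lemma~\ref{main1} yields $\mathcal{T}\not\to\mathcal{T}'$. Since degeneration is transitive and its negation propagates as in the definition of primary non-degeneration, it suffices to treat only the primary relations.

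The family $\mathcal{T}_{4,6}^{\lambda}$ is where the main difficulty lies, and I would handle it last using the family versions of both criteria. To establish $\mathcal{T}_{4,6}^{*}\to\mathcal{T}'$ I would supply a parametrized index $f(t)$ selecting the parameter together with a parametrized basis, while to rule out $\mathcal{T}_{4,6}^{*}\not\to\mathcal{T}'$ I would need a single separating set $\mathcal{R}$ containing $\mu(\lambda)$ for every admissible $\lambda$ at once, then invoke Lemma~\ref{main2}. The hard part is precisely this uniformity in $\lambda$: because the invariant $\xi(\lambda)$ distinguishes a continuum of non-isomorphic members, one Borel-stable $\mathcal{R}$ must be found that works along the entire family, and in the positive direction the limit must be controlled while the parameter itself is allowed to vary with $t$ through $f(t)$. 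Once the positive edges, the non-edges, and the family relations are collected and checked for consistency against the monotone invariants, reading off Figure~1 completes the argument.
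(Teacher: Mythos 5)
Your overall strategy is the paper's strategy: orbit dimensions via $\mathrm{dim}\,O(\mu)=16-\mathrm{dim}\,\mathfrak{Der}(\mu)$ computed from the defining linear system for derivations (Table \ref{tab:clas4}), explicit parametrized bases for the positive edges (Table \ref{tab:algdeg}), Borel-stable separating sets invoked through Lemma \ref{main1} for the non-edges (Table \ref{tab:algndeg}), with Corollary \ref{cor:degs} used both to prune candidate pairs and to settle several non-degenerations outright. So the methodology is sound and matches the source.

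There is, however, one concrete error in your plan: you assert that $\mathrm{dim}\,\mathfrak{Der}(\mathcal{T}_{4,6}^{\lambda})$ is constant in $\lambda$, so that ``the whole family occupies a single vertex.'' This is false, and it changes the shape of Figure 1. One has $\mathrm{dim}\,\mathfrak{Der}(\mathcal{T}_{4,6}^{\lambda})=6$ generically but $=8$ precisely for $\lambda\in\{1,-2,-1/2\}$, which by the invariant $\xi$ is a single isomorphism class; hence $\mathcal{T}_{4,6}^{1}$ must appear as a separate node at orbit dimension $8$, carrying its own adjacencies: the degenerations $\mathcal{T}_{4,5}\to\mathcal{T}_{4,6}^{1}$ and $\mathcal{T}_{4,6}^{1}\to\mathcal{T}_{4,2}$, and the primary non-degeneration $\mathcal{T}_{4,6}^{\lambda}\not\to\mathcal{T}_{4,6}^{1}$, whose separating set in Table \ref{tab:algndeg} depends on the fixed $\lambda$ (this is legitimate, since each fixed $\lambda$ is a separate statement). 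Collapsing the family to one vertex loses all of these edges. Relatedly, your pruning by orbit dimension and by Corollary \ref{cor:degs} cannot detect that $\mathcal{T}_{4,7}\to\mathcal{T}_{4,6}^{\lambda}$ holds \emph{only} for $\lambda=0,-1$ (as the labeled edge in Figure 1 records): since $\mathrm{dim}\,O(\mathcal{T}_{4,6}^{\lambda})=10<11=\mathrm{dim}\,O(\mathcal{T}_{4,7})$ for generic $\lambda$, only the separating set for $\mathcal{T}_{4,7}\not\to\mathcal{T}_{4,6}^{\lambda\neq 0,-1}$ rules the remaining cases out. Finally, a lesser organizational point: the parametrized-index machinery ($f(t)$, Lemma \ref{main2}) you reserve for the family is not needed for this lemma at all --- in Figure 1 every $\mathcal{T}_{4,6}^{\lambda}$ is a fixed-parameter system, and even the $\lambda$-dependent degeneration $\mathcal{T}_{4,6}^{\lambda\neq 1}\to\mathcal{T}_{4,4}$ uses a basis depending on the fixed $\lambda$; the closure of the whole family $\mathcal{T}_{4,6}^{*}$ is the subject of the subsequent Lemma \ref{th:deg4nltsf}, not of this one.
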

\begin{proof}
From Table \ref{tab:clas4} we deduce the dimensions of the orbits for each nilpotent Lie triple system of dimension $4$. Every primary degeneration and non-degeneration can be proven using the parametrized bases and separating sets included in Table \ref{tab:algdeg} and Table \ref{tab:algndeg} below, respectively.
\end{proof}

\begin{center}
{
\begin{tikzpicture}[->,>=stealth,shorten >=0.05cm,auto,node distance=1.3cm,
	thick,main node/.style={rectangle,draw,fill=gray!10,rounded corners=1.5ex,font=\sffamily \scriptsize \bfseries },rigid node/.style={rectangle,draw,fill=black!20,rounded corners=1.5ex,font=\sffamily \scriptsize \bfseries },style={draw,font=\sffamily \scriptsize \bfseries }]

	\node (11) at (-1,0) {$11$};
	\node (10) at (2,0) {$10$};
	\node (9) at (5,0) {$9$};
	\node (8) at (7,0) {$8$};
	\node (5) at (9,0) {$5$};
	\node (0)  at (11,0) {$0$};

	
	\node[main node] (c48) at (-1,-2.5) {$\mathcal{T}_{4,7}$};

	\node[main node] (c45) at (2,-1) {$\mathcal{T}_{4,5}$};
	\node[main node] (c47l) at (2,-2.5) {$\mathcal{T}_{4,6}^{\lambda}$};
	\node[main node] (c49) at (2,-4) {$\mathcal{T}_{4,8}$};

	\node[main node] (c44) at (5,-1) {$\mathcal{T}_{4,4}$};
	\node[main node] (c410) at (5, -4) {$\mathcal{T}_{4,9}$};

	\node[main node] (c43) at (7,-1) {$\mathcal{T}_{4,3}$};
	\node[main node] (c47) at (7, -4) {$\mathcal{T}_{4,6}^{1}$};

	\node[main node] (c42) at (9,-2.5) {$\mathcal{T}_{4,2}$};

	\node[main node] (c41) at (11,-2.5) {$\mathcal{T}_{4,1}$};

    \path[every node/.style={font=\sffamily\small}]

    (c48) edge node[above=0, right=-15, fill=white]{\tiny\arraycolsep=0.3pt $\begin{array}{c}
         \lambda=0 \\
         \lambda=-1 
    \end{array}$ } (c47l) 
    (c48) edge (c49)
    (c45) edge (c47)
    (c45) edge (c44)
    (c49) edge (c410)
    (c49) edge (c43)
    (c49) edge (c44)

    (c44) edge (c42)

    (c410) edge (c42)
    (c43) edge (c42)
    (c42) edge (c41)
    (c47) edge (c42)
    (c47l) edge (c44);

\end{tikzpicture}}

{Figure 1.}  Graph of primary degenerations and non-degenerations.
\end{center}

At this point, only the description of the closure of the orbit of the parametric family $\mathcal{T}_{4,6}^{*}$  is missing.

\begin{lemma}\label{th:deg4nltsf}
The closure of the orbit of the parametric family $\mathcal{T}_{4,6}^{*}$ in the variety ${\mathcal{T}_{4}}$ contains the closures of the orbits of the nilpotent Lie triple systems $\mathcal{T}_{4,5}$, $\mathcal{T}_{4,4}$, $\mathcal{T}_{4,2}$ and $\mathcal{T}_{4,1}$.
\end{lemma}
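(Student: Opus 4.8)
The plan is to reduce the whole statement to one genuinely new family degeneration, namely $\mathcal{T}_{4,6}^{*}\to\mathcal{T}_{4,5}$, and then to obtain the three remaining containments for free from the primary degenerations already recorded in Lemma~\ref{th:deg4nlts} together with the transitivity of orbit-closure inclusion. The point is that all four target systems are reachable from the chain $\mathcal{T}_{4,5}\to\mathcal{T}_{4,4}\to\mathcal{T}_{4,2}\to\mathcal{T}_{4,1}$ displayed in Figure~1, so it suffices to place $\mathcal{T}_{4,5}$ inside the closure of the orbit of the family.

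To establish the key degeneration, following the criterion stated just before Lemma~\ref{main2}, I would take the parametrized index $f(t)=1+t$ and the parametrized basis
\[
E_1(t)=e_1,\quad E_2(t)=t^{-1}e_1+e_2,\quad E_3(t)=e_3,\quad E_4(t)=e_4,
\]
which is a basis of $\mathbb{V}$ for every $t\in\mathbb{F}^{*}$ (its change-of-basis matrix is unitriangular), and note that $f(t)=1+t\neq 1$ for $t\neq 0$, so each $\mu(f(t))=\mathcal{T}_{4,6}^{1+t}$ is a bona fide member of the family. Computing the products of $\mathcal{T}_{4,6}^{1+t}$ in this basis, the only subtlety is the emergence of the extra structural constant of $\mathcal{T}_{4,5}$: the divergent coefficient $t^{-1}$ inside $E_2(t)$ is exactly cancelled by the vanishing of $\lambda-1=t$, since
\[
[E_2,E_3,E_2]=t^{-1}[e_2,e_3,e_1]+t^{-1}[e_1,e_3,e_2]=t^{-1}\bigl((1+t)e_4-e_4\bigr)=e_4.
\]
The remaining products are routine: one gets $[E_1,E_2,E_3]=-(2+t)e_4$, $[E_2,E_3,E_1]=(1+t)e_4$, $[E_3,E_1,E_2]=e_4$, $[E_3,E_2,E_2]=-e_4$, while every other product among the $E_i$ vanishes and every product involving $E_4=e_4\in\mathrm{Ann}$ is automatically zero. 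Letting $t\to 0$, the structural constants converge to $[E_1,E_2,E_3]=-2e_4$, $[E_2,E_3,E_1]=e_4$, $[E_3,E_1,E_2]=e_4$, $[E_2,E_3,E_2]=e_4$, i.e.\ precisely the multiplication table of $\mathcal{T}_{4,5}$ (with $[e_2,e_1,e_3]=2e_4$ recovered as the skew-symmetric image of the first relation). Hence $\mathcal{T}_{4,6}^{*}\to\mathcal{T}_{4,5}$.

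With this in hand the rest is formal. Since $\overline{\{O(\mu(\lambda))\}_{\lambda}}$ is closed and contains a representative of $\mathcal{T}_{4,5}$, it contains $\overline{O(\mathcal{T}_{4,5})}$. Lemma~\ref{th:deg4nlts} records the degenerations $\mathcal{T}_{4,5}\to\mathcal{T}_{4,4}$, $\mathcal{T}_{4,4}\to\mathcal{T}_{4,2}$ and $\mathcal{T}_{4,2}\to\mathcal{T}_{4,1}$, so by transitivity of orbit-closure inclusion $\overline{O(\mathcal{T}_{4,5})}\supseteq\overline{O(\mathcal{T}_{4,4})}\supseteq\overline{O(\mathcal{T}_{4,2})}\supseteq\overline{O(\mathcal{T}_{4,1})}$, and all four closures therefore lie inside the closure of the orbit of the family $\mathcal{T}_{4,6}^{*}$, as claimed.

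The main obstacle is conceptual rather than computational. Because $\mathcal{T}_{4,5}$ and the generic member $\mathcal{T}_{4,6}^{\lambda}$ share the same orbit dimension $10$, $\mathcal{T}_{4,5}$ cannot arise as a proper degeneration of any single $\mathcal{T}_{4,6}^{\lambda}$ (a proper degeneration strictly lowers the orbit dimension), so the family must be used as a whole. The real work is to discover the correct coupling between a basis change whose coefficient $t^{-1}$ blows up and an index $f(t)=1+t$ approaching the degenerate value $\lambda=1$, tuned so that the blow-up cancels to leading order and survives only as the new term $[e_2,e_3,e_2]=e_4$. Once this coupling is guessed, the verification reduces to the direct expansion above.
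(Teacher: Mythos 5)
Your proposal is correct and follows essentially the same route as the paper: the paper's proof also reduces everything to the single family degeneration $\mathcal{T}_{4,6}^{*}\to\mathcal{T}_{4,5}$ (established via a parametrized basis and parametrized index with $\lambda(t)\to 1$, namely $f(t)=\tfrac{2}{1+t}-1$ in its Table 4) and then invokes the chain $\mathcal{T}_{4,5}\to\mathcal{T}_{4,4}\to\mathcal{T}_{4,2}\to\mathcal{T}_{4,1}$ from Lemma \ref{th:deg4nlts} together with transitivity of orbit-closure inclusion. Your choice $f(t)=1+t$ with the unitriangular basis $E_2(t)=t^{-1}e_1+e_2$ is a different (and somewhat simpler) realization of the same degeneration, and your limit computation checks out against the multiplication table of $\mathcal{T}_{4,5}$.
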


\begin{proof}
The primary degenerations and non-degenerations that do not follow from the previous results are included in Table \ref{tab:infseriesdeg} and Table \ref{tab:infseriesndeg}  below, respectively.
\end{proof}

From Lemma \ref{th:deg4nlts} and Lemma \ref{th:deg4nltsf}, we have the following result that summarizes the geometric classification of ${\mathcal{T}_{4}}$.

\begin{theorem}
The variety of $4$-dimensional nilpotent Lie triple systems ${\mathcal{T}_{4}}$ has two irreducible components corresponding to the rigid Lie triple system $\mathcal{T}_{4,7}$ and the family of Lie triple systems $\mathcal{T}_{4,6}^{*}$.
The lattice of subsets for the orbit closures is given in Figure 2, where the numbers above are the dimensions of the corresponding orbits.
\end{theorem}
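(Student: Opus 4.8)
The plan is to exhibit $\mathcal{T}_4$ as a union of two irreducible closed subsets and then to check that neither is redundant. Recall that each orbit $O(\mu)$ is the image of the irreducible group $\mathrm{GL}(\mathbb{V})$ under the morphism $g\mapsto g*\mu$, so $\overline{O(\mu)}$ is irreducible with $\mathrm{dim}\,\overline{O(\mu)}=\mathrm{dim}\,O(\mu)$. By Theorem \ref{classification4} every point of $\mathcal{T}_4$ represents a system isomorphic to one of $\mathcal{T}_{4,1},\dots,\mathcal{T}_{4,5},\mathcal{T}_{4,7},\mathcal{T}_{4,8},\mathcal{T}_{4,9}$ or to a member of the family $\mathcal{T}_{4,6}^{*}$, so $\mathcal{T}_4$ equals the union of the corresponding orbit closures together with $\overline{\mathcal{T}_{4,6}^{*}}$, the closure of the union of the orbits of the family. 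The two candidate components are $\overline{O(\mathcal{T}_{4,7})}$ and $\overline{\mathcal{T}_{4,6}^{*}}$. First I would record that $\overline{\mathcal{T}_{4,6}^{*}}$ is irreducible of dimension $11$: it is the closure of the image of the irreducible variety $\mathbb{F}\times\mathrm{GL}(\mathbb{V})$ under $(\lambda,g)\mapsto g*\mu(\lambda)$, hence irreducible, and since the invariant $\xi$ of Theorem \ref{classification4} is non-constant the map $\lambda\mapsto[\mathcal{T}_{4,6}^{\lambda}]$ has finite generic fibres, so the parameter adds exactly one dimension to the generic orbit dimension $10$. Since $\mathrm{dim}\,O(\mathcal{T}_{4,7})=11$ as well, both candidates have dimension $11$.

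Next I would prove the decomposition $\mathcal{T}_4=\overline{O(\mathcal{T}_{4,7})}\cup\overline{\mathcal{T}_{4,6}^{*}}$. Reading the graph of Lemma \ref{th:deg4nlts} downward from $\mathcal{T}_{4,7}$ and using transitivity of degeneration, $\overline{O(\mathcal{T}_{4,7})}$ contains the orbit closures of $\mathcal{T}_{4,8},\mathcal{T}_{4,9},\mathcal{T}_{4,3},\mathcal{T}_{4,4},\mathcal{T}_{4,2},\mathcal{T}_{4,1}$ (and of $\mathcal{T}_{4,6}^{0},\mathcal{T}_{4,6}^{-1}$). Lemma \ref{th:deg4nltsf} places $\mathcal{T}_{4,5}$, together with $\mathcal{T}_{4,4},\mathcal{T}_{4,2},\mathcal{T}_{4,1}$ and every member of the family, in $\overline{\mathcal{T}_{4,6}^{*}}$. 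Thus every isomorphism class of Theorem \ref{classification4} lies in one of the two closures, which gives the decomposition; the full lattice of orbit-closure inclusions displayed in Figure 2 is then obtained by taking the transitive closure of all degenerations supplied by Lemmas \ref{th:deg4nlts} and \ref{th:deg4nltsf}.

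Finally I would check that neither closure is contained in the other, so that both are genuine irreducible components. The non-degeneration $\mathcal{T}_{4,7}\not\to\mathcal{T}_{4,5}$ recorded in Lemma \ref{th:deg4nlts} gives $\mathcal{T}_{4,5}\notin\overline{O(\mathcal{T}_{4,7})}$, while $\mathcal{T}_{4,5}\in\overline{\mathcal{T}_{4,6}^{*}}$ by Lemma \ref{th:deg4nltsf}; hence $\overline{\mathcal{T}_{4,6}^{*}}\not\subseteq\overline{O(\mathcal{T}_{4,7})}$. For the reverse inclusion, suppose $\overline{O(\mathcal{T}_{4,7})}\subseteq\overline{\mathcal{T}_{4,6}^{*}}$; since both are irreducible of dimension $11$, they would coincide, forcing $\mathcal{T}_{4,5}\in\overline{O(\mathcal{T}_{4,7})}$ and contradicting the previous line. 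Therefore the two closures are incomparable, their union is $\mathcal{T}_4$, and they are exactly its two irreducible components, corresponding to the rigid system $\mathcal{T}_{4,7}$ and to the family $\mathcal{T}_{4,6}^{*}$.

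The main obstacle is the infinite family: one must both establish that $\overline{\mathcal{T}_{4,6}^{*}}$ is a single irreducible piece of dimension $11$ and rule out $\mathcal{T}_{4,7}$ degenerating from the family. The dimension count is essential here, since without the equality $\mathrm{dim}\,\overline{O(\mathcal{T}_{4,7})}=\mathrm{dim}\,\overline{\mathcal{T}_{4,6}^{*}}=11$ the two equidimensional irreducible closures could a priori coincide; the genericity of $\xi$ and the separating-set non-degenerations furnished by Theorem \ref{classification4} and Lemmas \ref{th:deg4nlts}–\ref{th:deg4nltsf} are precisely what close this gap.
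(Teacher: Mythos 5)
Your proposal is correct and follows essentially the same route as the paper, which obtains this theorem directly by assembling Lemma \ref{th:deg4nlts} and Lemma \ref{th:deg4nltsf}; your explicit irreducibility and dimension count for $\overline{O\big(\mathcal{T}_{4,6}^{*}\big)}$ (via the finite generic fibres of the invariant $\xi$) spells out what the paper leaves implicit in Figure 2. The one cosmetic divergence is in ruling out $\overline{O\big(\mathcal{T}_{4,7}\big)}\subseteq\overline{O\big(\mathcal{T}_{4,6}^{*}\big)}$: the paper does this via the separating set giving $\mathcal{T}_{4,6}^{*}\not\to\mathcal{T}_{4,9},\mathcal{T}_{4,3}$ (Table \ref{tab:infseriesndeg}), whereas you use equidimensionality of the two irreducible closures together with $\mathcal{T}_{4,7}\not\to\mathcal{T}_{4,5}$ --- both arguments are valid.
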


\begin{center}\label{lattice}
{\tiny
\begin{tikzpicture}[-,draw=gray!50,node distance=0.93cm,
                   ultra thick,main node/.style={rectangle, fill=gray!50,font=\sffamily \scriptsize \bfseries },style={draw,font=\sffamily \scriptsize \bfseries }]

\node (6) {$11$};
\node (6r) [right  of=6] {};
\node (6rr) [right  of=6r] {};
\node (6rrr) [right  of=6rr] {};
\node (5) [right  of=6rr]      {$10$};
\node (5r) [right  of=5] {};
\node (5rr) [right  of=5r] {};
\node (5rrr) [right  of=5rr] {};
\node (4) [right  of=5rrr]      {$9$};
\node (4r) [right  of=4] {};
\node (4rr) [right  of=4r] {};
\node (3) [right  of=4rr]      {$8$};
\node (3r) [right  of=3] {};
\node (3rr) [right  of=3r] {};
\node (2) [right  of=3rr]      {$5$};
\node (2r) [right  of=2] {};
\node (2rr) [right  of=2r] {};
\node (2rrr) [right  of=2rr] {};
\node (1) [right  of=2rr]      {$0$};
\node (1r) [right  of=1] {};
\node (1rr) [right  of=1r] {};
\node (1rrr) [right  of=1rr] {};

	\node (11p1)  [below of =6]     	{};
	\node (11p2)  [below of =11p1]     	{};
	\node (11p3)  [below of =11p2]     	{};

	\node[main node] (c47l)  [below of =11p1]     	{$\overline{O\big(\mathcal{T}_{4,6}^{*}\big)}$};	
	\node[main node] (c48)  [below of =11p3]     	{$\overline{O\big(\mathcal{T}_{4,7}\big)}$};	

	\node (10p1)  [below of =5]     	{};
	\node (10p2)  [below of =10p1]     	{};
	\node (10p3)  [below of =10p2]     	{};
	\node (10p4)  [below of =10p3]     	{};
	
	\node[main node] (c45)  [below of =5]     	{$\overline{O\big(\mathcal{T}_{4,5}\big)}$};	
	\node[main node] (c46)  [below of =10p2]     	{$\overline{O\big(\mathcal{T}_{4,6}^{0}\big)}$};
	\node[main node] (c49)  [below of =10p4]     	{$\overline{O\big(\mathcal{T}_{4,8}\big)}$};	
	
	\node (9p1)  [below of =4]     	{};
	\node (9p2)  [below of =9p1]     	{};
	\node (9p3)  [below of =9p2]     	{};

	\node[main node] (c44)  [below of =9p1]     	{$\overline{O\big(\mathcal{T}_{4,4}\big)}$};	
	\node[main node] (c410)  [below of =9p3]     	{$\overline{O\big(\mathcal{T}_{4,9}\big)}$};

	\node (8p1)  [below of =3]     	{};
	\node (8p2)  [below of =8p1]     	{};
	\node (8p3)  [below of =8p2]     	{};

	\node[main node] (c43)  [below of =8p1]     	{$\overline{O\big(\mathcal{T}_{4,3}\big)}$};	
	\node[main node] (c471)  [below of =8p3]     	{$\overline{O\big(\mathcal{T}_{4,6}^{1}\big)}$};
	
	\node (5p1)  [below of =2]     	{};
	\node (5p2)  [below of =5p1]     	{};
	\node (5p3)  [below of =5p2]     	{};

	\node[main node] (c42)  [below of =5p2]     	{$\overline{O\big(\mathcal{T}_{4,2}\big)}$};

	\node (0p1)  [below of =1]     	{};
	\node (0p2)  [below of =0p1]     	{};
	\node (0p3)  [below of =0p2]     	{};

	\node[main node] (c41) [ below  of =0p2]       {$\overline{O\big(\mathcal{T}_{4,1}\big)}$};
	
\path

  (c48) edge   (c46)
  (c48) edge   (c49)

  (c45) edge   (c44)
  (c45) edge   (c471)

  (c46) edge   (c44)

  (c49) edge   (c44)
  (c49) edge   (c43)
  (c49) edge   (c410)

  (c44) edge   (c42)

  (c410) edge  (c42)

  (c43) edge   (c42)

  (c471) edge   (c42)

  (c42) edge   (c41)

  (c47l) edge   (c46)
  (c47l) edge   (c45)
  ;

\end{tikzpicture}}

{Figure 2.}  Inclusion graph for orbit closures and dimension. 
\end{center}

\begin{table}[H]
    \centering
    \begin{tabular}{|l|lllll|c|}
			\hline
			${\mathcal  T }$ & Multiplication table &&&&& $\mathrm{dim}\,\mathfrak{Der}$ \ \\
			\hline
\hline

			$\mathcal{T}_{4,1}$ &&&&&& $16$ 	\\
			\hline
			
			$\mathcal{T}_{4,2}$ & $\left[
e_{1},e_{2},e_{1}\right] =e_{3}$&&&&& $9$ 	\\
			\hline
			
			$\mathcal{T}_{4,3}$ & $\left[ e_{1},e_{2},e_{1}\right] =e_{3}$&$\left[
e_{1},e_{2},e_{2}\right] =e_{4}$ &&&& $8$ 	\\
			\hline
			
			$\mathcal{T}_{4,4}$ & $\left[ e_{2},e_{3},e_{2}\right] =e_{4}$&$\left[
e_{3},e_{1},e_{3}\right] =e_{4}$&&&& $7$ \\
			\hline
			
			$\mathcal{T}_{4,5}$ &$\left[ e_{2},e_{3},e_{1}\right] =e_{4}$&$\left[
e_{3},e_{1},e_{2}\right] =e_{4}$&$\left[ e_{2},e_{1},e_{3}\right] =2e_{4}$&$
\left[ e_{2},e_{3},e_{2}\right] =e_{4}$&& $6$ 	\\
			\hline
			
			$\mathcal{T}_{4,6}^{\lambda}$
			&$\left[ e_{1},e_{2},e_{3}\right] =-\left( \lambda +1\right) e_{4}$&$\left[
e_{2},e_{3},e_{1}\right] =\lambda e_{4}$&$\left[ e_{3},e_{1},e_{2}\right]
=e_{4}$&&& $\begin{array}{cc}
8 & \lambda=1,-2,-1/2 \\
6 & \textrm{Otherwise} \end{array}$ 	\\
			\hline
			
			$\mathcal{T}_{4,7}$ &$\left[ e_{1},e_{2},e_{1}\right] =e_{3}$&$\left[
e_{1},e_{2},e_{3}\right] =e_{4}$&$\left[ e_{1},e_{3},e_{2}\right] =e_{4}$&&& $5$ 	\\
			\hline
			
			$\mathcal{T}_{4,8}$ &$\left[ e_{1},e_{2},e_{1}\right] =e_{3}$&$\left[
e_{1},e_{3},e_{1}\right] =e_{4}$&$\left[ e_{1},e_{2},e_{2}\right] =e_{4}$&&& $6$ 	\\
			\hline
			
            $\mathcal{T}_{4,9}$ &$\left[ e_{1},e_{2},e_{1}\right] =e_{3}$&$\left[
e_{1},e_{3},e_{1}\right] =e_{4}$&&&& $7$ 	\\
			\hline

    \end{tabular}
    \caption{The dimension of derivations of   $4$-dimensional nilpotent Lie triple systems}
    \label{tab:clas4}
\end{table}

\begin{table}[H]
    \small
    \centering
    \begin{tabular}{|rcl|llll|}
\hline
\multicolumn{3}{|c|}{\textrm{Degeneration}}  & \multicolumn{4}{|c|}{\textrm{Parametrized basis}} \\
\hline
\hline
$\mathcal{T}_{4,7} $ & $\to$ & $\mathcal{T}_{4,6}^{0}  $ & $
E_{1}(t)= e_1,$ & $
E_{2}(t)= e_2,$ & $
E_{3}(t)= t^{-1} e_3,$ & $
E_{4}(t)= -t^{-1} e_4$\\
\hline
$\mathcal{T}_{4,5} $ & $\to$ & $\mathcal{T}^1_{4,6}  $ & $
E_{1}(t)= e_1,$ & $
E_{2}(t)= te_2,$ & $
E_{3}(t)= e_3,$ & $
E_{4}(t)= te_4$\\
\hline
$\mathcal{T}_{4,8} $ & $\to$ & $\mathcal{T}_{4,3}  $ & $
E_{1}(t)= te_1 ,$ & $
E_{2}(t)= e_2,$ & $
E_{3}(t)= t^2 e_3,$ & $
E_{4}(t)= te_4$\\
\hline
$\mathcal{T}_{4,8} $ & $\to$ & $\mathcal{T}_{4,9}  $ & $
E_{1}(t)= e_1,$ & $
E_{2}(t)= te_2,$ & $
E_{3}(t)= te_3,$ & $
E_{4}(t)= te_4$\\
\hline
$\mathcal{T}_{4,4} $ & $\to$ & $\mathcal{T}_{4,2}  $ & $
E_{1}(t)= e_2,$ & $
E_{2}(t)= t e_3,$ & $
E_{3}(t)= t e_4,$ & $
E_{4}(t)= t e_1$\\
\hline
$\mathcal{T}_{4,9} $ & $\to$ & $\mathcal{T}_{4,2}  $ & $
E_{1}(t)= e_1,$ & $
E_{2}(t)= te_2,$ & $
E_{3}(t)= te_3,$ & $
E_{4}(t)= e_4$\\
\hline
$\mathcal{T}_{4,3} $ & $\to$ & $\mathcal{T}_{4,2}  $ & $
E_{1}(t)= e_1,$ & $
E_{2}(t)= te_2,$ & $
E_{3}(t)= te_3,$ & $
E_{4}(t)= e_4$\\
\hline
$\mathcal{T}_{4,2} $ & $\to$ & $\mathcal{T}_{4,1}  $ & $
E_{1}(t)= t e_1,$ & $
E_{2}(t)= t e_2,$ & $
E_{3}(t)= t e_3,$ & $
E_{4}(t)= t e_4$\\
\hline
$\mathcal{T}_{4,8} $ & $\to$ &  $\mathcal{T}_{4,4} $ & $
E_{1}(t)= t^{-1}e_3,$ & $
E_{2}(t)= -i e_2,$ & $
E_{3}(t)= t e_1,$ & $
E_{4}(t)= t e_4$\\
\hline
$\mathcal{T}_{4,6}^{1} $ & $\to$ &  $\mathcal{T}_{4,2} $ & $
E_{1}(t)= t e_1 - \frac{1}{3 t}e_3,$ & $
E_{2}(t)= e_2,$ & $
E_{3}(t)= e_4,$ & $
E_{4}(t)= e_3$\\
\hline
$\mathcal{T}_{4,7}$ & $\to$ &  $\mathcal{T}_{4,8} $ & $
E_{1}(t)= e_1 - \frac{1}{2t^3}e_2 - \frac{1}{4t^5}e_3,$ & $
E_{2}(t)= \frac{1}{2t}e_2 - \frac{1}{4t^3}e_3,$ & $
E_{3}(t)= \frac{1}{2t}e_3,$ & $
E_{4}(t)= -\frac{1}{4t^4}e_4$\\
\hline
$\mathcal{T}_{4,5}$ & $\to$ &  $\mathcal{T}_{4,4} $ & $
E_{1}(t)= \frac{t}{3}e_1 ,$ & $
E_{2}(t)= e_2,$ & $
E_{3}(t)= -\frac{1}{3t}e_1 + t^{-1}e_2 + e_3,$ & $
E_{4}(t)= e_4$\\
\hline
$\mathcal{T}_{4,6}^{\lambda\neq1}$ &$ \to$&   $\mathcal{T}_{4,4}$ & $
E_{1}(t)= e_2,$ & $
E_{2}(t)= e_1 + \frac{t^{-1}}{(\lambda-1)}e_2,$ & $
E_{3}(t)= -\frac{t^{-1}}{(2\lambda+1)}e_1 - \frac{t^{-2}}{(\lambda^2+\lambda-2)}e_2 + e_3,$ & $
E_{4}(t)= t^{-1}e_4$\\
\hline
    \end{tabular}
    \caption{Degenerations of $4$-dimensional nilpotent Lie triple systems, except of the family $\mathcal{T}_{4,6}^{*}$}
    \label{tab:algdeg}
\end{table}


\begin{table}[H]
    \centering
    \begin{tabular}{|rcl|l|}
\hline
\multicolumn{3}{|c|}{\textrm{Non-degeneration}} & \multicolumn{1}{|c|}{\textrm{Arguments}}\\
\hline
\hline
$\mathcal{T}_{4,7}$ &$\not \to$&  $\mathcal{T}_{4,5}, \mathcal{T}_{4,6}^{\lambda\neq0,-1}$ &
${\mathcal R}= \left\{ \begin{array}{l}
c_{1,2,1}^{3} = - c_{2,1,1}^{3}, c_{1,2,1}^{4} = - c_{2,1,1}^{4}, c_{1,2,2}^{4} = - c_{2,1,2}^{4}, c_{1,2,3}^{4} = - c_{2,1,3}^{4},\\
c_{1,3,1}^{4} = -c_{3,1,1}^4, c_{1,3,2}^{4} = - c_{3,1,2}^{4} = c_{1,2,3}^{4} \textrm{ and }     c_{ijk}^p = 0 \textrm{ otherwise.}  
\end{array} \right\}$\\
\hline
$\mathcal{T}_{4,6}^{\lambda}$ & $\not \to$ &  $\mathcal{T}_{4,6}^{1}$ &
${\mathcal R}= \left\{ \begin{array}{l}
c_{1,2,1}^{4} =  - c_{2,1,1}^{4}, c_{1,2,2}^{4} = - c_{2,1,2}^{4}, c_{1,2,3}^{4} = -c_{2,1,3}^{4} = (1+\lambda) c_{1,3,2}^{4},c_{1,3,1}^{4}= - c_{3,1,1}^{4},\\
c_{1,3,2}^{4}= - c_{3,1,2}^{4}, c_{2,3,1}^{4}= - c_{3,2,1}^{4} = - \lambda c_{1,3,2}^{4} \textrm{ and }  c_{ijk}^p = 0 \textrm{ otherwise.}
 \end{array} \right\}$\\
\hline
$\mathcal{T}_{4,9}$ &$\not \to$&  $\mathcal{T}_{4,3}$ &
${\mathcal R}= \left\{ \begin{array}{l}
c_{1,2,1}^{3}  = - c_{2,1,1}^{3}, c_{1,2,1}^{4} = - c_{2,1,1}^{4}, c_{1,3,1}^{4} = - c_{3,1,1}^{4} \textrm{ and }  c_{ijk}^p = 0 \textrm{ otherwise.} \end{array} \right\}$\\
\hline
$\mathcal{T}_{4,5}$ &$\not \to$&  $\mathcal{T}_{4,9}$, $\mathcal{T}_{4,3}$ &
Corollary \ref{cor:degs} (2) \\
\hline
$\mathcal{T}_{4,6}^{\lambda}$  &$\not \to$&  $\mathcal{T}_{4,9}$, $\mathcal{T}_{4,3}$ &
Corollary \ref{cor:degs} (2)\\
\hline
    \end{tabular}
    \caption{Non-degenerations of $4$-dimensional nilpotent Lie triple systems, except of the family $\mathcal{T}_{4,6}^{*}$}
    \label{tab:algndeg}
\end{table}

\begin{table}[H]
    \centering
    \begin{tabular}{|rcl|llll|l|}
\hline
\multicolumn{3}{|c|}{\textrm{Degeneration}}  & \multicolumn{4}{|c|}{\textrm{Parametrized basis}} & \multicolumn{1}{|c|}{\textrm{Parametrized index}}\\
\hline
\hline
$\mathcal{T}_{4,6}^{*} $ & $\to$ & $\mathcal{T}_{4,5}  $ & $
E_{1}(t)= \frac{1}{2}e_1 + \frac{1}{ 2t} e_2,$ & $
E_{2}(t)= - \frac{1}{2t}e_1 + \frac{1}{2t^2}e_2,$ & $
E_{3}(t)= e_3,$ & $
E_{4}(t)= \frac{1}{2t^2}$ & $f(t) = \frac{2}{1+t}-1$\\
\hline
    \end{tabular}
    \caption{Degenerations of the the $4$-dimensional nilpotent Lie triple systems for the family $\mathcal{T}_{4,6}^{*}$}
    \label{tab:infseriesdeg}
\end{table}

\begin{table}[H]
    \centering
    \begin{tabular}{|rcl|l|}
\hline
\multicolumn{3}{|c|}{\textrm{Non-degeneration}} & \multicolumn{1}{|c|}{\textrm{Arguments}}\\
\hline
\hline
$\mathcal{T}_{4,6}^{*}$ &$\not \to$&  $\mathcal{T}_{4,9}$, $\mathcal{T}_{4,3}$ &
${\mathcal R}= \left\{ \begin{array}{l}
c_{1,2,1}^{4} =  - c_{2,1,1}^{4}, c_{1,2,2}^{4} = - c_{2,1,2}^{4}, c_{1,2,3}^{4} = - c_{2,1,3}^{4},c_{1,3,1}^{4}= - c_{3,1,1}^{4},\\
c_{1,3,2}^{4}= - c_{1,3,2}^{4}, c_{2,3,1}^{4}= -c_{3,2,1}^{4} \textrm{ and }  c_{ijk}^p = 0 \textrm{ otherwise.}
 \end{array} \right\}$\\
\hline
    \end{tabular}
    \caption{Non-degenerations of the $4$-dimensional nilpotent Lie triple systems for the family $\mathcal{T}_{4,6}^{*}$}
    \label{tab:infseriesndeg}
\end{table}

\red 


\black


\begin{thebibliography}{}

\bibitem{acf}
Abdelwahab H., Calderón Martín A., Fernández Ouaridi A.,
Central extensions of 4-dimensional binary Lie algebras. {\em  Rocky Mountain J. Math.} {\bf 50} (2020), no. 5, 1541--1559.


\bibitem{ack}
Abdelwahab H.,  Calder\'on A.J., Kaygorodov I.,
    The algebraic and geometric classification of nilpotent binary Lie algebras. 
{\em  Internat. J. Algebra Comput.} {\bf 29}    
      (2019), no. 6, 1113--1129.   

\bibitem{ale}
Alvarez M.A., 
    On rigid $2$-step nilpotent Lie algebras.
{\em  Algebra Colloq.} {\bf  25 }
   (2018), no. 2, 349--360. 




\bibitem{maria}
Alvarez M.A., Hern\'{a}ndez I., Kaygorodov I.,
    Degenerations of Jordan superalgebras.
{\em  Bull. Malays. Math. Sci. Soc.} {\bf  42}
    (2019), no. 6, 3289--3301.

\bibitem{contr11}
Ancochea Berm\'{u}dez J.M., Fres\'{a}n J., Margalef Bentabol J., 
    Contractions of low-dimensional nilpotent Jordan algebras. 
{\em    Comm. Algebra} {\bf  39} (2011), no. 3, 1139--1151.


\bibitem{bb09}
Beneš T.,  Burde D., 
    Degenerations of pre-Lie algebras. 
 {\em   J. Math. Phys.} {\bf   50}
 (2009), no. 11, 112102, 9 pp.
 
\bibitem{bb14}
Beneš T., Burde D.,
    Classification of orbit closures in the variety of three-dimensional Novikov algebras.
{\em    J. Algebra Appl.} {\bf   13}
 (2014), no. 2, 1350081, 33 pp.  
 
\bibitem{Bouetou}
Bouetou, T.,
  Classification of solvable 3-dimensional Lie triple systems. Non-associative algebra and its applications, 41--54, Lect. Notes Pure Appl. Math., {\bf  246}, Chapman  \& Hall/CRC, Boca Raton, FL, 2006.


\bibitem{BC99} 
Burde D., Steinhoff C.,
    Classification of orbit closures of $4$--dimensional complex Lie algebras.
{\em  J. Algebra} {\bf   214} (1999), no. 2, 729--739.

\bibitem{cfk182}
Calderón Martín A., Fern\'andez Ouaridi A., Kaygorodov I.,
    The classification of $n$-dimensional anticommutative algebras with $(n-3)$-dimensional annihilator. 
  {\em  Comm. Algebra} {\bf 47} (2019), no. 1, 173--181.


\bibitem{cfk19}
Calderón Martín A.,  Fern\'andez Ouaridi A., Kaygorodov I.,
    The classification of $2$-dimensional rigid algebras.
 {\em  Linear Multilinear Algebra} {\bf 68} (2020), no. 4, 828--844.


\bibitem{cfk18}
Calderón Martín A., Fern\'andez Ouaridi A., Kaygorodov I.,
    On the classification of bilinear maps with radical of a fixed codimension, 
    Linear and Multilinear Algebra, 2020.


\bibitem{degr3}
Cicalò S., De Graaf W.,   Schneider C.,
    Six-dimensional nilpotent Lie algebras.
 {\em   Linear Algebra Appl.} {\bf  436} (2012), no. 1, 163--189.


\bibitem{usefi1}
Darijani I., Usefi H.,
    The classification of 5-dimensional $p$-nilpotent restricted Lie algebras over perfect fields. I.
 {\em J. Algebra} {\bf  464}
  (2016), 97--140.
  
\bibitem{degr2}
De Graaf W., 
    Classification of 6-dimensional nilpotent Lie algebras over fields of characteristic not $2$.
{\em  J. Algebra} {\bf   309} (2007), no. 2, 640--653. 
 
\bibitem{degr1}
De Graaf W., 
    Classification of nilpotent associative algebras of small dimension.
{\em    Internat. J. Algebra Comput.} {\bf  28 }
   (2018), no. 1, 133--161.


\bibitem{demir}
Demir I., Misra K.,  Stitzinger E.,
    On classification of four-dimensional nilpotent Leibniz algebras.
{\em  Comm. Algebra} {\bf  45 }
 (2017), no. 3, 1012--1018.


\bibitem{Neher}
 Neher E.,
    On the classification of Lie and Jordan triple systems.
{\em   Comm. Algebra} {\bf  13}  
  (1985), no. 12, 2615--2667.


\bibitem{degs22}
Fernández Ouaridi A., Kaygorodov I., Khrypchenko M., Volkov Y., Degenerations of nilpotent algebras.
{\em   J. Pure Appl. Algebra} {\bf   226}
  (2022), no. 3, Paper No. 106850, 21 pp.


\bibitem{gkk19}
Gorshkov I., Kaygorodov I., Khrypchenko M.,
    The geometric classification of nilpotent Tortkara algebras.
  {\em   Comm. Algebra} {\bf  48} (2020), no. 1, 204--209.

\bibitem{gkp}
 Gorshkov I.,  Kaygorodov I.,  Popov Yu., 
   Degenerations of Jordan algebras and ``marginal" algebras. 
{\em    Algebra Colloq.} {\bf  28} (2021), no. 2, 281--294.


\bibitem{GRH}
Grunewald F.,  O'Halloran J.,
    Varieties of nilpotent Lie algebras of dimension less than six.
{\em   J. Algebra} {\bf   112}
  (1988), no. 2, 315--325. 

\bibitem{GRH2}
Grunewald F., O'Halloran J.,
    A Characterization of orbit closure and applications.
{\em     J. Algebra} {\bf    116} (1988), no. 1, 163--175.


\bibitem{ha16}
Hegazi A., Abdelwahab H.,
    Classification of five-dimensional nilpotent Jordan algebras.
{\em  Linear Algebra Appl.} {\bf  494} (2016), 165--218.



\bibitem{ha17}
Hegazi A., Abdelwahab H.,
    The classification of $N$-dimensional non-associative Jordan algebras with $(N-3)$-dimensional annihilator.
 {\em  Comm. Algebra} {\bf 46} (2018), no. 2, 629--643.

\bibitem{hac16}
Hegazi A., Abdelwahab H., Calderón Martín A.,
    The classification of $n$-dimensional non-Lie Malcev algebras with $(n-4)$-dimensional annihilator. 
{\em   Linear Algebra Appl.} {\bf  505} (2016), 32--56.

\bibitem{hac18}
Hegazi A., Abdelwahab H.,  Calderón Martín A.,
    Classification of nilpotent Malcev algebras of small dimensions over arbitrary fields of characteristic not $2$.
  {\em   Algebr. Represent. Theory} {\bf  21}
 (2018), no. 1, 19--45.
 
\bibitem{Hopkins}  Hopkins N. C., Nilpotent ideals in Lie and anti-Lie triple
systems. {\em    J. Algebra} {\bf  178 } 
 (1995), no. 2, 480--492.


\bibitem{ikm19}
Ismailov N., Kaygorodov I.,  Mashurov F.,
    The algebraic and geometric classification of nilpotent assosymmetric algebras. 
{\em    Algebr. Represent. Theory} {\bf   24}
   (2021), no. 1, 135--148.

\bibitem{ikv17}
Ismailov N., Kaygorodov I.,  Volkov Yu.,
    The geometric classification of Leibniz algebras.
{\em   Internat. J. Math.} {\bf   29}
   (2018), no. 5, 1850035, 12 pp.

\bibitem{ikv18}
Ismailov N., Kaygorodov I.,  Volkov Yu.,
    Degenerations of Leibniz and anticommutative algebras.
{\em   Canad. Math. Bull.} {\bf  62}
   (2019), no. 3, 539--549.

\bibitem{Lister}  Lister W.G., A structure theory of Lie triple systems.
{\em  Trans. Amer. Math. Soc.} {\bf   72}
(1952), 217--242.

\bibitem{kkk18}
Karimjanov I., Kaygorodov I., Khudoyberdiyev A.,
    The algebraic and geometric classification of nilpotent Novikov  algebras. 
{\em     J. Geom. Phys.} {\bf  143}
  (2019), 11--21.
  
\bibitem{kppv}
Kaygorodov I.,  Popov Yu., Pozhidaev A., Volkov Yu.,
    Degenerations of Zinbiel and nilpotent Leibniz algebras.
{\em   Linear Multilinear Algebra} {\bf  66}
   (2018), no. 4, 704--716.

\bibitem{kpv}
Kaygorodov I., Popov Yu., Volkov Yu.,
    Degenerations of binary Lie and nilpotent Malcev algebras.
{\em  Comm. Algebra} {\bf  46 } (2018), no. 11, 4928--4940.


\bibitem{kv16}
Kaygorodov I.,   Volkov Yu.,
    The variety of two-dimensional algebras over an algebraically closed field.
{\em  Canad. J. Math.} {\bf   71}
    (2019), no. 4, 819--842.

\bibitem{kv17}
Kaygorodov I., Volkov Yu., 
    Complete classification of algebras of level two.  
{\em  Mosc. Math. J.} {\bf  19 } (2019), no. 3, 485--521.

\bibitem{Jacbson}  Jacobson N., General representation theory of Jordan 
algebras.  {\em    Trans. Amer. Math. Soc. } {\bf  70}
 (1951), 509--530.

\bibitem{Jacobson}
Jacobson N., 
    Lie and Jordan Triple Systems. 
 {\em     Amer. J. Math.   } {\bf   71}
 (1949), 149--170.




\bibitem{Loss}  Loos O., \"{U}ber eine Beziehung zwischen Malcev-Algebren und Lietripelsystemen.  {\em Pacific J. Math. } {\bf 18}
   (1966), 553--562.
 
\bibitem{S90}
Seeley C., 
    Degenerations of 6-dimensional nilpotent Lie algebras over $\mathbb{C}$. 
 {\em   Comm. Algebra} {\bf  18}
 (1990), no. 10, 3493--3505.


\bibitem{ss78}
Skjelbred T., Sund T.,
    Sur la classification des alg\`{e}bres de Lie nilpotentes.
 {\em  C. R. Acad. Sci. Paris Sér. A-B} {\bf   286}
 (1978), no. 5, A241--A242.




\bibitem{yamaguti}
Yamaguti K.,
  On algebras of totally geodesic spaces (Lie triple systems).
{\em   J. Sci. Hiroshima Univ. Ser. A  } {\bf  21}    (1957/58), 107--113.
  
\end{thebibliography}
\end{document}